\newtheorem{theo}{Theorem}[section]
\newtheorem{lemma}{Lemma}[section]
\newtheorem{propo}{Proposition}[section]
\newtheorem{coro}{Corollary}[section]
\newtheorem{exx}{Example}
\newtheorem{defi}{Definition}[section]
\newtheorem{remark}{Remark}
\newtheorem{assumption}{Assumption}
\author[1]{Escobar-Bach, M.}
\author[2]{Popier, A.}
\author[1]{Sahin, M.}
\affil[1]{LAREMA, Université d'Angers}
\affil[2]{Laboratoire Manceau de Mathématiques, Le Mans Université}
\title{Ratio limit theorem for renewal processes}
\begin{document}

\maketitle

\begin{abstract}
  We consider a renewal process which models a cumulative shock model that fails when the accumulation of shocks up-crosses a certain threshold. The ratio limit properties of the probabilities of non-failure after $n$ cumulative shocks are studied. We establish that the ratio of survival probabilities converges to the probability that the renewal epoch equals zero. This limit holds for any renewal process, subject only to mild regularity conditions on the individual shock random variable. Precisions on the rates of convergence are provided depending on the support structure and the regularity of the distribution. Arguments are provided to highlight the coherence between this new results and the pre-existing results on the behavior of summands of i.i.d. real random variables.
\end{abstract}

\noindent
\textit{Some keywords:} Renewal process, Cumulative shock model, Ratio limit theorem, Large deviation theory.

\noindent
\textit{MSC classification: 44A35, 60E99, 60F99, 60F10, 60G50.}

\section{Introduction}

Renewal processes are stochastic models that generalize Poisson processes with exponential holding times to arbitrary distributions. This stochastic construction is useful for describing stochastic phenomena involving successive events. In this model, the inter-arrival times between events are assumed to be independent and identically distributed (\textit{i.i.d.}) non-negative random variables. This allows for a flexible description of complex systems, which is useful for risk analysis and follow-up studies. In reliability theory, this type of model is referred to as a cumulative shock model that describes systems that are subject to repeated random shocks over time. Each shock contributes incrementally to the system's degradation, so that failure occurs when the total accumulated damage shocks exceed a certain threshold.

In the literature, important developments in renewal theory date back to the proof of the strong renewal theorem in \cite{Gar62}, which was later generalized in \cite{Eri70,Fel71} with several extensions since. For instance, \cite{Gut83} treated the common case of independent sequences, later relaxed in \cite{Sha83,Sha85} with correlated random variables. Similar convergence results have been proposed in \cite{Gut90} in the context of two-dimensional stopped random walks. These understandings have also led to the development of substantial theories. Large deviation theory, in particular, is interested in estimating the probabilities of extremely unlikely deviations with empirical means. More recently, \cite{Bai15} has proposed a central limit result for local cumulative shock models with cluster structure. Additionally, questions concerning the evaluation of model reliability under random shocks have been addressed e.g. \cite{Mon15,Gon18}. Shock models exhibiting a specific dependence structure have also been the subject of detailed investigations. For instance, models with dependent inter-arrival times are studied in \cite{Ran19,Goy23}, and a cumulative shock model is proposed for time-dependent reliability analysis of deteriorating structures in \cite{Li19}. Few results also investigate the stochastic behaviors of crossing times for renewal processes at fixed critical threshold values. In recent works, \cite{Ome11,Mit14} formulate local limit theorems which establish the rates of convergence of crossing time probability mass functions. Within the same framework, \cite{Ome11} established a uniform convergence type theorem with an exponential rate of convergence for the probability mass functions, providing more precise bounds when $n$ approaches infinity.

In this paper, we focus on renewal processes with \textit{i.i.d.} non-negative random variables $(X_k)_{k \in \mathbb{N}^*}$ capturing the intensity of individual jumps, and a parameter $x>0$ that defines a fixed critical threshold beyond which the process fails. Formally, we define $S_n$ as the sum of the first $n$ variables $X_i$, and introduce the first instant at which $S_n$ jumps over $x$ as $\tau(x)$. Our probabilities of interest are given by $$c_{n,x}=\mathbb{P}[S_n \leq x]=\mathbb{P}[\tau(x) > n]$$ which describe the probability of non-failure after $n$ consecutive jumps. To properly determine the asymptotic behavior of these probabilities, we propose to consider the sequence $(\frac{c_{n+1,x}}{c_{n,x}})_{n\geq 1}$, which typically refers to the limit ratio properties of the process. The literature has widely addressed the study of ratio convergence properties, with many studies focusing on the analysis of Markov chains. The bulk of the analyses concerns probabilities of first return to a bounded domain, asymptotic frequencies of occupation as well as asymptotic behaviors of transition probabilities for Markov chains. In the second chapter of Freedman's seminal work \cite{Fre83}, numerous classical ratio limit theorems for Markov chains in the recurrent case are established. Random walks also provide a rich class of examples see e.g. \cite{KesI63,KesII63,Sto66}. Ratio limit theorems are also closely connected to Martin boundaries and to the long-time behaviors of Markov chains \cite{Sawyer1978,Sawyer1980,Woess2021}. Additionally, \cite{Den15} proposed similar conditions for ratio sequences that enable random walks to remain within a cone.
More recently, studies in \cite{Hof08} proposed a multi-state model on the integer lattice. It notably established a ratio limit theorem in order to obtain the asymptotic behavior for the size of the occupied cluster at the origin of the lattice. 

From a statistical perspective, ratio limit conditions were also employed in \cite{Sah25} to discuss model identifiability. In this study, a thorough discussion about model parametric classes of the random times is proposed according to whether the distributions are atomic or continuous. In particular, the assumptions on both distribution types are rather restrictive. They either require an isolated zero for the atomic support or impose conditions on the convolution of the density function in the continuous case.

\medskip 

Overall, the aforementioned studies were primarily interested in understanding the asymptotic properties of ratio sequences, as well as their behavior when approaching their limit. In line with these works, we propose to expand upon the findings from \cite{Sah25} and other sources to provide a broader analysis of the limits of ratio sequences for renewal processes. We obtain new results on the limiting behavior of ratio sequences for various classes of positive-valued random variables. One of the strengths of our study is that these different classes cover a wide range of discrete or absolutely continuous random variables. Moreover checking whether a given random variable belongs to one of these classes is relatively straightforward. As another novelty, we show that rates of convergence can be properly derived and depend on the regularities of the distributions of the individual jumps. In other words the behavior of the ratio sequences is explicitly derived from the ‘parameters’ of the class under consideration. This is particularly true for absolutely continuous random variables. Finally, to obtain our results, we also derive new properties on self-convolutions of functions with regular variations near zero.

\medskip

The remainder of the paper is given as follows. In Section \ref{sect:: main_results}, we introduce a cumulative shock model where the shock degradation are modeled with a renewal process and we establish the main limit ratio properties of the process. 
In Section \ref{sec:: LDT}, we discuss on the coherence between our ratio limit theorems and the framework of large deviations. All proofs and technical lemmas are postponed to Section \ref{sec:: main_proofs}.

\section{Asymptotic properties} \label{sect:: main_results}

Let $(\Omega,\mathcal{F},\mathbb{P})$ be a common probability space and $X = (X_n)_{n \in \mathbb{N}^*}$ be a sequence of \textit{i.i.d.} non-negative random variables, such that $X_1$ is not almost surely equal to 0. For any $n\geq1$ and $x>0$, we denote by $F$ the cumulative distribution function of $X_1$, $S_n=\sum_{k=1}^n X_k$ and recall that 
$$c_{n,x} = \mathbb{P} [ S_{n} \leq x ]$$
with $c_{0,x}= 1$. The nature and local behavior of the distribution $F$ around the origin play a central role in the asymptotic properties of the sequence $(c_{n,x})_{n \in \mathbb{N}}$. In the sequel, we choose to incorporate and discuss these characteristics among several classes of distributions. This allows us to consider a wide variety of renewal processes by distinguishing the different asymptotic behaviors of the ratio sequence $(\frac{c_{n+1,x}}{c_{n,x}})_{n\geq 1}$.
It is clear that distributions with support bounded away from zero have null probabilities when $n$ is large enough, yielding to trivial ratio sequence. We denote the latter set as the class $\mathcal{C}_1$ and focus primarily on distributions outside of it. We begin our analysis by dividing distributions into two categories: discrete and continuous, and then move on to more general configurations.\\ 

For any discrete distributions, we denote by $\{x_i, i \in J\}$ the set of strictly positive atoms where $J\subset\mathbb{N}$ and set $x_{\min}=\inf\{x_i, i \in J\}$. Given any threshold $s>0$ and for $ y = \inf \{x_i,\, x_i>s\}$, one can decompose the $n+1$-th jump probability by
 \begin{align} \nonumber
     \mathbb{P}& [S_{n+1} \leq x]\\ 
     \nonumber
     & = \mathbb{P} [S_{n} \leq x]  \mathbb{P} [ X_{1} = 0] + \sum_{i,\,x_i\leq s} \mathbb{P} [S_{n} \leq x - x_i] \mathbb{P} [ X_{1} = x_i]+ \sum_{\{i,\,x_i> s\}} \mathbb{P} [S_{n} \leq x - x_i] \mathbb{P} [ X_{1} = x_i] \\  \nonumber
     & \leq \mathbb{P} [S_{n} \leq x]  \mathbb{P} [ X_{1} = 0] + \sum_{i,\,x_i\leq s} \mathbb{P} [S_{n} \leq x - x_i] \mathbb{P} [ X_{1} = x_i]  +\mathbb{P} [S_{n} \leq x - y ]\\ \label{eq:discrete_case_spilting}
     & \leq \mathbb{P} [S_{n} \leq x]  \mathbb{P} [ X_{1} = 0] + \mathbb{P} [S_{n} \leq x ] (F(s) - F(0))  +\mathbb{P} [S_{n} \leq x - y ].
\end{align}  
so that
\begin{eqnarray*}
0\leq \dfrac{c_{n+1,x}}{c_{n,x}}-\mathbb{P}[X_1=0]\leq F(s) - F(0)  +\dfrac{c_{n,x-y}}{c_{n,x}}.
\end{eqnarray*}
This emphasizes that the ratio limit towards $\mathbb{P}(X_1=0)$ is closely related to the limiting behavior at 0 of $F$ and the arrangement of the atoms near the origin. As the number of iterations increases, the renewal processes can only make small jumps to stay below the threshold $x$. To account for this, we propose considering distributions with polynomial form at the origin. This approach can address a wide range of processes with discrete renewal epochs. We consider that a discrete random variable $X_1$ belongs to either one of the following classes: 
\begin{itemize}
    \item $\mathcal C_2$: $0 \in X_1(\Omega)$ and $ x_{\min} > 0$. 
    \item $\mathcal C_3$: $ x_{\min} = 0$ and $\exists \alpha > 0$ such that $F(t) - F(0) = \underset{t \xrightarrow{} 0^+}{ O } (t^{\alpha})$.
\end{itemize}
It is important to mention that the assumption regarding whether or not the origin is an isolated point in the set of atoms is significant. Note, however, that in $\mathcal{C}_3$, we do not specify whether the origin belongs to the set of atoms or not, though this assumption will be important in the proof section.

Although the arguments slightly differ in case of absolutely continuous distribution in the equation \eqref{eq:discrete_case_spilting}, we can impose similar behaviors for $F$ with regularly varying conditions on the density function $f$. We consider that an absolutely continuous variable $X_1$ belongs to the following class: 
\begin{itemize}
    \item $\mathcal C_4$: $F(x) >0$ for any $x>0$ and $f\in C^1((0,+\infty))$ with $f'$ monotone in the neighborhood of 0. There exists $\alpha > 0$ such that $f$ is regularly varying of index $\alpha -1$ at 0.
\end{itemize}
Regular variation is a standard notion. However for completeness, we propose to recall the definition. 
\begin{defi} \label{defi:regular_variation}
A positive measurable function $L:]0,a] \xrightarrow{} [0,+\infty[ $, with $a>0$ is said to be slowly varying at 0 if
\begin{eqnarray*}
    \lim_{x \xrightarrow{} 0^+} \frac{L(\lambda x)}{L(x)} = 1 \; ; \quad \quad \forall \lambda>0.
\end{eqnarray*}
A function $f$ defined on $]0,a]$ is said to be regularly varying at zero, with index of regular variation $\beta \in \mathbb{R}$, if there exists a slowly varying function $L$ such that $f(x) = x^{\beta} L(x), \forall x \in ]0,a]$.
\end{defi}

The class $\mathcal{C}_4$ contains a wide variety of distributions. Straightforward examples outside of this class usually have index of regular variation $\alpha-1$ equals to $-1$ or non-monotonic density in the neighborhood of zero (see e.g. the standard log-Cauchy law with $f(t) = (t \pi (1 + \log (t)^2))^{-1} $). Lastly, we consider the following class as a generalization of $\mathcal{C}_4$, taking into account the possibility of having strictly positive atoms.
\begin{itemize}
    \item $\mathcal C_5$: $X_1$ can be decomposed with an atomic part $X^{\text{disc}}_1$ and an absolutely continuous part $X^{\text{cont}}_1$ such that the density $f$ of $X^{\text{cont}}_1$ satisfies the same conditions as in the class $\mathcal C_4$ and the set of atoms $\{x_i, i \in J\}$ of $X_1$ is bounded away from zero.
\end{itemize}
Although our assumptions seem different between the continuous and discrete distributions, results from \cite[Proposition 1.5.8]{bing:gold:teug:89} show that functions $F$ in $\mathcal{C}_4$ and $\mathcal{C}_5$ fit the same regularity conditions than that of the class $\mathcal{C}_3$.

\subsection{Discrete random variables}
In this section, we propose to study the ratio limit properties for the discrete classes of distributions. We start by recalling a result from \cite[Theorem 4.6]{Sah25}.
\begin{propo}\label{prop:first_paper}
When $X_1 \in  \mathcal C_2$ and $x_{\min} \in X_1(\Omega)$, for any $x>0$, there exist $C_x > 0$ and $N_x \geq 1$ such that for any $n \geq N_x$
\begin{eqnarray*}
    \left| \frac{c_{n+1,x} }{ c_{n,x} } - \mathbb{P} [X_1 = 0] \right| \leq \frac{C_x}{n} .
\end{eqnarray*}
\end{propo}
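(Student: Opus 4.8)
\emph{Proof sketch.} The plan is to reduce the statement to a comparison between $c_{n,x}$ and $c_{n,x-x_{\min}}$, and then to evaluate both quantities by conditioning on the (binomially distributed) number of nonzero increments. Write $p_0=\mathbb{P}[X_1=0]$, which lies in $(0,1)$ since $0\in X_1(\Omega)$ and $X_1$ is not a.s. $0$, set $q=1-p_0$, and put $m=\lfloor x/x_{\min}\rfloor$. If $x<x_{\min}$ then $S_n\le x$ forces every increment to vanish, so $c_{n,x}=p_0^{\,n}$ and the ratio equals $p_0$ exactly; hence we may assume $x\ge x_{\min}$, i.e. $m\ge 1$.

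First I would specialize the decomposition \eqref{eq:discrete_case_spilting} — equivalently, condition on $X_1$ — choosing the threshold $s$ below $x_{\min}$ so that the middle sum is empty:
$$c_{n+1,x}=p_0\,c_{n,x}+\sum_{i\in J}\mathbb{P}[X_1=x_i]\,c_{n,\,x-x_i}.$$
Since all positive atoms satisfy $x_i\ge x_{\min}$, monotonicity of $y\mapsto c_{n,y}$ gives $c_{n,x-x_i}\le c_{n,x-x_{\min}}$, and $\sum_{i\in J}\mathbb{P}[X_1=x_i]=q$, whence
$$0\le \frac{c_{n+1,x}}{c_{n,x}}-p_0\le q\,\frac{c_{n,\,x-x_{\min}}}{c_{n,x}}.$$
It therefore suffices to prove $c_{n,x-x_{\min}}/c_{n,x}=O(1/n)$.

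For this, I would use the exact identity obtained by conditioning on $N_n:=\#\{k\le n: X_k>0\}\sim\mathrm{Binomial}(n,q)$: given $N_n=j$, the nonzero increments are i.i.d. with the conditional law of $X_1$ given $X_1>0$, call it that of $Y$, so $c_{n,y}=\sum_{j\ge 0}\binom{n}{j}p_0^{\,n-j}q^{\,j}\,\mathbb{P}[T_j\le y]$ with $T_j=Y_1+\cdots+Y_j\ge j\,x_{\min}$; consequently the sum runs only over $j\le\lfloor y/x_{\min}\rfloor$. Keeping only the top term in the denominator yields the lower bound $c_{n,x}\ge\binom{n}{m}p_0^{\,n-m}q^{\,m}a_m$ with $a_m:=\mathbb{P}[T_m\le x]$; here the hypothesis $x_{\min}\in X_1(\Omega)$ is crucial, for it guarantees $\mathbb{P}[Y=x_{\min}]>0$ and hence $a_m>0$ even in the boundary case $x=m\,x_{\min}$. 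For the numerator, $\lfloor(x-x_{\min})/x_{\min}\rfloor=m-1$ and $\mathbb{P}[T_j\le\cdot]\le 1$, so for $n\ge 2m$ (where $j\mapsto\binom{n}{j}$ is nondecreasing on $\{0,\dots,m-1\}$) one gets $c_{n,x-x_{\min}}\le\binom{n}{m-1}p_0^{\,n-m+1}\sum_{j=0}^{m-1}p_0^{\,m-1-j}q^{\,j}\le m\binom{n}{m-1}p_0^{\,n-m+1}$.

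Dividing the two estimates, the exponential factors cancel and one is left with
$$\frac{c_{n,\,x-x_{\min}}}{c_{n,x}}\le\frac{m\,p_0}{q^{\,m}a_m}\cdot\frac{\binom{n}{m-1}}{\binom{n}{m}}=\frac{m\,p_0}{q^{\,m}a_m}\cdot\frac{m}{n-m+1}\le\frac{2m^2 p_0}{q^{\,m}a_m}\cdot\frac1n\qquad(n\ge 2m),$$
which, combined with the two-sided bound above, gives the claim with $N_x=2m$ and $C_x=2m^2 p_0/q^{\,m-1}a_m$. The only genuine subtlety — and the step I expect to be the main obstacle — is securing a lower bound on $c_{n,x}$ of the exact order $\binom{n}{m}p_0^{\,n}$: this hinges on $a_m>0$, which fails precisely when $x/x_{\min}\in\mathbb{N}$ and $x_{\min}$ is not an atom, and that is exactly why the hypothesis $x_{\min}\in X_1(\Omega)$ is imposed; everything else is bookkeeping with binomial coefficients.
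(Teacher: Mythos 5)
Your proof is correct. Note first that the paper itself does not prove this proposition: it is recalled from \cite{Sah25}, and the closest internal argument is the proof of Proposition \ref{theo:: class_c2}, which strengthens it. Your core device is the same as that proof's — partitioning $\{S_n\le y\}$ according to the number of positive increments, so that $c_{n,y}=\sum_{j\le \lfloor y/x_{\min}\rfloor}\binom{n}{j}p_0^{\,n-j}q^{\,j}\mathbb{P}[T_j\le y]$ — but you deploy it differently. You first reduce, via the one-step conditioning on a single increment, to bounding $c_{n,x-x_{\min}}/c_{n,x}$, and then estimate numerator and denominator separately (top binomial term below, crude sup of binomial terms above), which yields the $O(1/n)$ bound with an explicit admissible pair $N_x=2m$, $C_x=2m^2p_0/(q^{m-1}a_m)$; the paper instead writes the ratio $c_{n+1,x}/c_{n,x}$ exactly as a quotient of two binomial polynomials in $n$ and reads off the sharp equivalent $\mathbb{P}[X_1=0]\,M_x/n$, without needing the hypothesis $x_{\min}\in X_1(\Omega)$ (at the price of adjusting $M_x$ in the boundary case). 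Your bookkeeping checks out — including the floor identity $\lfloor (x-x_{\min})/x_{\min}\rfloor=m-1$, the monotonicity of $\binom{n}{j}$ for $j\le m-1\le n/2$, and the correct identification that $a_m>0$ is exactly where $x_{\min}\in X_1(\Omega)$ is used (only genuinely needed when $x=m\,x_{\min}$, since otherwise atoms arbitrarily close to $x_{\min}$ already give $a_m>0$) — so the proposal is a valid, slightly more elementary but less precise variant of the paper's method.
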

\noindent
The latter proposition provides a rate of convergence for the ratio sequence, although there are still some differences with our settings. Additionally, it also appears that it requires a minimum element in the set of the strictly positive atoms. This condition is however not always available in $\mathcal{C}_2$ as illustrated by the following example. 
\begin{exx} \label{exx::exotic_var}
Let $X_1$ be a random variables such that $X_1(\Omega) = \{ 0 \} \cup  \{ 1 + \frac{1}{n} ; n \geq 1 \}\cup  \{ 2 - \frac{1}{n} ; n \geq 1 \}$ where $\mathbb{P} [X_1 = 0] = \frac{1}{2}$ and for any $n \geq 1$
\begin{align*}
    \mathbb{P} \left[ X_1 = 1 + \frac{1}{n} \right] = \mathbb{P} \left[ X_1 = 2 - \frac{1}{n} \right]=\frac{1}{2^{n+2}}.
\end{align*}
It turns out that $X_1$ is a random variables that belongs to $\mathcal{C}_2$ but zero is an isolated point of the support.
\end{exx}
\noindent
The following proposition extends the above result and considers random variables that belong to the class $\mathcal{C}_2$. It establishes the asymptotic equivalence of $\frac{c_{n+1,x}}{c_{n,x}} -  \mathbb{P} [ X_{1} = 0]$ and does not require a minimum of the set of strictly positive atoms. 
\begin{propo} \label{theo:: class_c2}
Let $X_1\in\mathcal{C}_2$. Define $M_x\in\mathbb{N}$ such that
\begin{align*}
    M_x = \left\{ \begin{array}{cl}
        x/x_{\min} -1 & \quad \text{ if $x/x_{\min} \in \mathbb{N}$ and $x_{\min} \notin X_1(\Omega)$},\\
        \lfloor x/x_{\min} \rfloor &  \quad \text{otherwise}.
    \end{array} \right.
\end{align*}
Then we have
\begin{eqnarray*}
    \dfrac{c_{n+1,x}}{c_{n,x}}  - \mathbb{P} [X_1 = 0] \underset{n \to + \infty}{\sim} \mathbb{P} [X_1 = 0]\dfrac{ M_x}{n}.
\end{eqnarray*}
\end{propo}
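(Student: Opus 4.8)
The plan is to iterate the decomposition in \eqref{eq:discrete_case_spilting} down to the smallest scale $x_{\min}$ and extract the exact first-order term. Since $X_1 \in \mathcal{C}_2$ means $0 \in X_1(\Omega)$ and $x_{\min}>0$, every atom is $\geq x_{\min}$, so conditioning on $X_1$ gives
\begin{equation*}
  c_{n+1,x} = \mathbb{P}[X_1=0]\, c_{n,x} + \sum_{i,\, x_i \geq x_{\min}} \mathbb{P}[X_1 = x_i]\, c_{n,\, x - x_i}.
\end{equation*}
The key observation is that $c_{n,y}$ for $y<x$ is, to leading order, a \emph{smaller power of $n$} times $c_{n,x}$: specifically I expect that for $y \in [0,x)$ one has $c_{n,y} \sim \mathbb{P}[X_1=0]^{?}\cdot(\text{polynomial in }n)\cdot c_{n,x}$ with the polynomial degree governed by how many copies of $x_{\min}$ fit below the relevant threshold. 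Concretely, the dominant contribution to $c_{n,y}$ comes from trajectories that take $X_1=0$ as often as possible and make exactly the minimal number of strictly positive jumps needed to ``use up'' the budget $y$; each such jump costs one factor of $n$ in the combinatorial count (choosing which of the $n$ steps is nonzero). So I would first prove, by induction on $\lfloor x/x_{\min}\rfloor$, a lemma of the form: for $X_1 \in \mathcal{C}_2$ and any $x>0$,
\begin{equation*}
  c_{n,x} \underset{n\to\infty}{\sim} \mathbb{P}[X_1=0]^{n}\, \frac{n^{M_x}}{M_x!}\, \kappa_x
\end{equation*}
for an explicit constant $\kappa_x>0$, where $M_x$ is exactly the integer defined in the statement (the two cases in the definition of $M_x$ encode whether the budget $x$ can be exhausted by $\lfloor x/x_{\min}\rfloor$ jumps of size $x_{\min}$ when $x_{\min}$ is itself an atom, versus needing one fewer when it is not). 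The base case $M_x=0$ (i.e.\ $x<x_{\min}$, or $x=x_{\min}$ with $x_{\min}\notin X_1(\Omega)$) is immediate: then $S_n\leq x$ forces all $X_k=0$, so $c_{n,x}=\mathbb{P}[X_1=0]^n$ exactly.

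Given this asymptotic for $c_{n,x}$, the proposition follows by taking the ratio: the factor $\mathbb{P}[X_1=0]^n$ contributes $\mathbb{P}[X_1=0]$, and $\frac{(n+1)^{M_x}/M_x!}{n^{M_x}/M_x!} = (1+1/n)^{M_x} = 1 + M_x/n + o(1/n)$, giving $c_{n+1,x}/c_{n,x} - \mathbb{P}[X_1=0] \sim \mathbb{P}[X_1=0]\, M_x/n$, provided the subleading terms in the lemma are genuinely $o(n^{M_x-1}\mathbb{P}[X_1=0]^n)$ — so the lemma must be stated with enough precision (a relative error $o(1/n)$, not merely $o(1)$) to control this. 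I would therefore actually prove the sharper statement $c_{n,x} = \mathbb{P}[X_1=0]^n\big(\kappa_x n^{M_x}/M_x! + O(n^{M_x-1})\big)$, again by induction: in the recursion, the $X_1=0$ term reproduces $\mathbb{P}[X_1=0]^{n}\cdot(\text{same polynomial in }n-\text{shifted})$, while each strictly positive atom $x_i$ contributes $\mathbb{P}[X_1=x_i]\,c_{n,x-x_i}$, and $c_{n,x-x_i}$ has, by the induction hypothesis, leading power $n^{M_{x-x_i}}$ with $M_{x-x_i}\leq M_x-1$; one then sums these against the geometric weights and checks the powers of $n$ add up correctly after the telescoping (or generating-function) bookkeeping.

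The main obstacle is the bookkeeping for which atom configurations achieve the maximal power $M_x$ and proving that the constant $\kappa_x$ is strictly positive (so the asymptotic is non-degenerate) together with controlling the sum over infinitely many atoms $x_i$ uniformly — in Example~\ref{exx::exotic_var}, for instance, there are infinitely many atoms accumulating at $1$ and at $2$, so one cannot simply enumerate ``the'' minimal configuration; one needs that the atoms with $x_i\in(0,\varepsilon]$ near $x_{\min}$ are handled as a block via $F(s)-F(0)$ as in \eqref{eq:discrete_case_spilting}, and that atoms bounded away from $x_{\min}$ contribute lower-order terms because they force more ``budget per jump'' and hence strictly fewer jumps, i.e.\ strictly smaller $M$. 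A clean way to organize all this is to pass to the generating function $\sum_{n\geq 0} c_{n,x} z^n$ (or rather $\sum_n c_{n,x}\,w^n$ with $w = z\,\mathbb{P}[X_1=0]$) and read off the pole order at the relevant singularity; but the elementary inductive route sketched above should also work and keeps the argument self-contained. Either way the heart of the matter is identifying $M_x$ as the maximal number of size-$\geq x_{\min}$ jumps compatible with $S_n\leq x$ and showing each extra forced jump costs exactly one power of $n$.
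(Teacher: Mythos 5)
Your strategy (iterate the one-step recursion and induct on the number of $x_{\min}$-jumps that fit under the threshold) is workable in spirit, but the step where you pass from your lemma to the proposition has a genuine precision gap. You yourself observe that the conclusion requires a remainder that is $o\!\left(n^{M_x-1}\mathbb{P}[X_1=0]^n\right)$, i.e.\ a relative error $o(1/n)$, and then you propose to prove only $c_{n,x}=\mathbb{P}[X_1=0]^n\bigl(\kappa_x n^{M_x}/M_x!+O(n^{M_x-1})\bigr)$. That is not enough: since $(n+1)^{M_x}-n^{M_x}\sim M_x n^{M_x-1}$, an $O(n^{M_x-1})$ remainder in numerator and denominator is of exactly the same order as the correction $M_x/n$ you are trying to isolate, so taking the ratio of the two expansions only yields $c_{n+1,x}/c_{n,x}-\mathbb{P}[X_1=0]=O(1/n)$, not the claimed equivalence. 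To repair your route you should not divide two asymptotic expansions at all, but use the recursion once more to represent the quantity of interest directly: $c_{n+1,x}-\mathbb{P}[X_1=0]\,c_{n,x}=\sum_i\mathbb{P}[X_1=x_i]\,c_{n,x-x_i}$, where every $M_{x-x_i}\le M_x-1$; the leading-order lemma applied to these terms (with domination by $c_{n,x-x_{\min}}$ to justify exchanging the limit with the possibly infinite sum over atoms), together with the telescoping identity that produces $\kappa_x$ from the increments, then gives numerator $\sim C\,n^{M_x-1}\mathbb{P}[X_1=0]^{n}$ and denominator $\sim (C/M_x)\,n^{M_x}\mathbb{P}[X_1=0]^{n}$, whence the factor $M_x/n$. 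As written, however, this identification of the constant, the strict positivity of $\kappa_x$ (which is exactly what the two-case definition of $M_x$ encodes), and the uniform treatment of atoms accumulating above $x_{\min}$ are precisely the points you list as obstacles without resolving them, so the proposal remains a plan rather than a proof.

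For comparison, the paper's proof removes all remainder bookkeeping with one exact combinatorial decomposition: partitioning $\{S_n\le x\}$ according to the number $k$ of indices $i\le n$ with $X_i>0$ (necessarily $k\le M_x$) gives the identity $c_{n,x}=\sum_{k=0}^{M_x}\binom{n}{k}\mathbb{P}[X_1=0]^{n-k}\,\mathbb{P}[S_k\le x,\ X_j>0\ \forall j\le k]$, so that $\mathbb{P}[X_1=0]^{-(n-M_x)}c_{n,x}$ is an exact polynomial in $n$ of degree $M_x$ with positive leading coefficient, and $\binom{n+1}{k}-\binom{n}{k}=\binom{n}{k-1}$ yields the stated equivalence immediately. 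Your conjectured lemma $c_{n,x}\sim\mathbb{P}[X_1=0]^n\kappa_x n^{M_x}/M_x!$ is a corollary of that identity; proving the identity (rather than the asymptotic with error terms) is the cleaner way to make your argument rigorous.
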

\noindent
Intuitively, the integer $M_x$ represents the maximum number of strictly positive jumps that the renewal process can make before crossing the threshold. Note in particular that in cases where $x< x_{\min}$, $M_x=0$ and the sequence $c_{n,x}$ trivially equals $\mathbb{P}(X_1=0)^n$ for all $n\in\mathbb{N}$. The ratio sequence provides accurate approximations of the probability at the origin. Additionally, we demonstrate that the mass probabilities of specific atoms before $x$ can be approximated.
\begin{propo} \label{theo:: class_2}
Let $X_1\in\mathcal{C}_2$ and $y_1\in X_1(\Omega)$ with $y_1\leq x$. If $y_2$ is the first atom following $y_1$ and $y_2-y_1 > x_{\min}$, then there exists $C_{x,y_1,y_2} > 0$ such that for $n$ large enough 
\begin{equation*}
    \left| \dfrac{c_{n+1,x} - \underset{x_i < y_1}{\sum} c_{n,x-x_i} \mathbb{P} [ X_1 = x_i] }{ c_{n,x-y_1} } - \mathbb{P} [ X_1 = y_1 ] \right| \leq \frac{C_{x,y_1,y_2}}{n} .
\end{equation*}
\end{propo}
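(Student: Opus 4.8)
The plan is to condition on the first shock $X_1$ and then isolate the single atom $y_1$. Conditioning $c_{n+1,x}=\mathbb{P}[S_{n+1}\le x]$ on the value of $X_1$ gives the one--step renewal identity
\[
c_{n+1,x}=\mathbb{P}[X_1=0]\,c_{n,x}+\sum_{i\in J}\mathbb{P}[X_1=x_i]\,c_{n,x-x_i},
\]
with the convention $c_{n,z}=0$ for $z<0$. Grouping the atoms according to whether they are $<y_1$, equal to $y_1$, or $>y_1$, and moving to the left the contributions of all atoms strictly below $y_1$ (the summation $\sum_{x_i<y_1}$ being understood to include the atom $x_i=0$, i.e.\ the term $\mathbb{P}[X_1=0]\,c_{n,x}$), one gets the exact identity
\begin{align*}
\frac{c_{n+1,x}-\sum_{x_i<y_1}\mathbb{P}[X_1=x_i]\,c_{n,x-x_i}}{c_{n,x-y_1}}-\mathbb{P}[X_1=y_1]
&=\frac{1}{c_{n,x-y_1}}\sum_{\substack{i\in J\\ y_1<x_i\le x}}\mathbb{P}[X_1=x_i]\,c_{n,x-x_i}\\
&=:\frac{R_n}{c_{n,x-y_1}},
\end{align*}
which makes sense because $c_{n,x-y_1}\ge\mathbb{P}[X_1=0]^n>0$. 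Since $R_n\ge0$, the whole statement reduces to proving $R_n/c_{n,x-y_1}=O(1/n)$.

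I would then estimate $R_n$. If $y_2>x$ there is no atom in $(y_1,x]$, so $R_n=0$ and the bound is trivial; assume $y_2\le x$. Every atom $x_i>y_1$ satisfies $x_i\ge y_2$, hence $x-x_i\le x-y_2$, so by monotonicity of $z\mapsto c_{n,z}$ and $\sum_i\mathbb{P}[X_1=x_i]\le1$ we get $0\le R_n\le c_{n,x-y_2}$. It thus suffices to prove $c_{n,x-y_2}/c_{n,x-y_1}=O(1/n)$.

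The key quantitative input is a two--sided polynomial control of $c_{n,z}$, which is essentially the representation underlying Proposition~\ref{theo:: class_c2}. Conditioning on the number $N_n$ of indices $k\le n$ with $X_k>0$ (a binomial variable with parameters $n$ and $\mathbb{P}[X_1>0]$), one writes, for every $z\ge0$,
\[
c_{n,z}=\sum_{k=0}^{M_z}\binom{n}{k}\,\mathbb{P}[X_1=0]^{\,n-k}\,\mathbb{P}[X_1>0]^{\,k}\,q_k(z),\qquad q_k(z):=\mathbb{P}[\Sigma_k\le z],
\]
where $\Sigma_k$ is the sum of $k$ i.i.d.\ copies of $X_1$ conditioned to be positive, the sum stops at $M_z$ because $q_k(z)=0$ for $k>M_z$, and $q_{M_z}(z)>0$; here $M_z$ is the integer of Proposition~\ref{theo:: class_c2} with threshold $z$, and its floor expression coincides with ``the largest number of strictly positive jumps that can sum to at most $z$ with positive probability'', a routine check in both cases of the definition. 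Elementary bounds on binomial coefficients then produce, for each $z$, constants $0<B_z\le A_z<\infty$ with
\[
B_z\,\mathbb{P}[X_1=0]^n\,n^{M_z}\ \le\ c_{n,z}\ \le\ A_z\,\mathbb{P}[X_1=0]^n\,n^{M_z}\qquad\text{for all large }n.
\]

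Finally I would invoke the gap hypothesis. Applying the two--sided bound with $z=x-y_1$ and $z=x-y_2$ gives $c_{n,x-y_2}/c_{n,x-y_1}\le (A_{x-y_2}/B_{x-y_1})\,n^{\,M_{x-y_2}-M_{x-y_1}}$. Since $y_2-y_1>x_{\min}$ we have $x-y_2<(x-y_1)-x_{\min}$, and lowering the argument of $M_{\cdot}$ by at least $x_{\min}$ decreases its value by at least one, so $M_{x-y_2}\le M_{x-y_1}-1$ (note $M_{x-y_1}\ge1$ since $x-y_1\ge y_2-y_1>x_{\min}$). Hence $c_{n,x-y_2}/c_{n,x-y_1}=O(1/n)$, and combined with $0\le R_n\le c_{n,x-y_2}$ this yields $R_n/c_{n,x-y_1}\le C_{x,y_1,y_2}/n$ for $n$ large, which is the assertion. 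I expect the main obstacle to be establishing the two--sided control of $c_{n,z}$ cleanly and making the discrete inequality $M_{x-y_2}\le M_{x-y_1}-1$ airtight across the boundary conventions for $M$ (whether $x_{\min}$ is an atom, whether $x-y_1<x_{\min}$); this is bookkeeping rather than a genuine difficulty.
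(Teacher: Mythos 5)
Your proposal is correct, and its first half coincides with the paper's: the same one-step decomposition over the value of a single shock, the same convention that the sum $\sum_{x_i<y_1}$ includes the zero atom, and the same reduction of the whole statement to showing $c_{n,x-y_2}/c_{n,x-y_1}=O(1/n)$. Where you diverge is in how this last ratio is controlled. The paper never estimates $c_{n,z}$ itself: it applies Proposition \ref{theo:: class_c2} at the level $x-y_1$, writes $\frac{c_{n+1,x-y_1}}{c_{n,x-y_1}}-\mathbb{P}[X_1=0]=\sum_{x_i>0}\frac{c_{n,x-y_1-x_i}}{c_{n,x-y_1}}\mathbb{P}[X_1=x_i]$, and uses the gap $y_2-y_1>x_{\min}$ to pick an atom $x_{i_0}\in[x_{\min},y_2-y_1)$ so that the single term indexed by $x_{i_0}$ already dominates $\mathbb{P}[X_1=x_{i_0}]\,c_{n,x-y_2}/c_{n,x-y_1}$; since the left-hand side is $\sim\mathbb{P}[X_1=0]M_{x-y_1}/n$, the $O(1/n)$ bound follows with an explicit constant $2\,\mathbb{P}[X_1=0]M_{x-y_1}/\mathbb{P}[X_1=x_{i_0}]$. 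You instead re-use the binomial decomposition underlying Proposition \ref{theo:: class_c2} to prove the standalone two-sided estimate $c_{n,z}\asymp \mathbb{P}[X_1=0]^n n^{M_z}$ and then compare exponents via $M_{x-y_2}\le M_{x-y_1}-1$. Both routes ultimately rest on the same two ingredients (the decomposition over the number of positive jumps and the existence of an atom in $[x_{\min},y_2-y_1)$, which is exactly what makes the dominant term $q_{M_{x-y_1}}(x-y_1)$ positive and the exponent drop by one); your version yields a slightly stronger intermediate result (a genuine two-sided polynomial estimate on $c_{n,z}$, hence a transparent reason for the $1/n$ rate), at the cost of redoing part of the work of Proposition \ref{theo:: class_c2} and of carrying out carefully the achievability check $q_{M_z}(z)>0$ and the bookkeeping on $M_z$ across the two cases of its definition — which, as you note, does go through, including when $x_{\min}$ is not attained (use atoms in $(x_{\min},z/M_z]$) and when $(x-y_1)/x_{\min}$ is an integer with $x_{\min}\notin X_1(\Omega)$.
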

\noindent
In particular, Proposition \ref{theo:: class_2} exhibits the link between the probabilities of small jumps and the mass probabilities. From a statistical perspective, this means that the distribution $F$ can be recursively estimated if the atoms are indexed by monotonic order and the gaps between them are large enough.\\

In the main result of this section, we present the limit of the ratio sequence when the distribution belongs to the class $\mathcal{C}_3$. We recall that in this case, zero represents an accumulated point of the support, meaning that the jumps of the renewal processes are as small as possible. This property introduces a degree of complexity to the analysis of the sequence $(c_{n,x})_{n\in\mathbb{N}}$, yet the limit of the ratio sequence remains unchanged.
\begin{theo} \label{theo:: class_3}
Let $X_1 \in  \mathcal C_3$, then for any $x>0$ we have
\begin{align*}
    \lim_{n \xrightarrow{} + \infty} \dfrac{c_{n+1,x} }{c_{n,x}} = \mathbb{P}[X_1=0].
\end{align*}
\end{theo}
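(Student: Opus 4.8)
\medskip
\noindent\textit{Proof strategy.} The plan is to reduce the statement to a single ``spatial'' estimate and then prove that estimate by a pathwise surgery. First, conditioning on $X_1$ (the first line of the display \eqref{eq:discrete_case_spilting}) gives $c_{n+1,x}=\mathbb{P}[X_1=0]\,c_{n,x}+\sum_{i\,:\,0<x_i\le x}c_{n,x-x_i}\,\mathbb{P}[X_1=x_i]$, so that $c_{n+1,x}/c_{n,x}\ge\mathbb{P}[X_1=0]$ and
\begin{equation*}
  \frac{c_{n+1,x}}{c_{n,x}}-\mathbb{P}[X_1=0]=\sum_{i\,:\,0<x_i\le x}\frac{c_{n,x-x_i}}{c_{n,x}}\,\mathbb{P}[X_1=x_i].
\end{equation*}
Fixing $\varepsilon>0$ and, by right--continuity of $F$ at $0$, a $\rho\in(0,x)$ with $F(\rho)-F(0)<\varepsilon$, I would bound the contribution of the atoms $x_i\le\rho$ by $F(\rho)-F(0)<\varepsilon$ (using $c_{n,x-x_i}/c_{n,x}\le 1$) and the contribution of the atoms $x_i>\rho$ by $c_{n,x-\rho}/c_{n,x}$ (using $c_{n,x-x_i}\le c_{n,x-\rho}$ and $\sum_i\mathbb{P}[X_1=x_i]\le1$). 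Hence the theorem follows once one shows that $c_{n,x-\rho}/c_{n,x}\to0$ as $n\to+\infty$, for each fixed $\rho\in(0,x)$. (All the $c_{n,x}$ are positive: $x_{\min}=0$ furnishes an atom $a<x/n$, whence $c_{n,x}\ge\mathbb{P}[X_1=a]^{n}>0$.)

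The key is the following surgery estimate, which I would prove first: for every strictly positive atom $a'$ of $X_1$, every $m\ge0$ and every $\phi\ge0$,
\begin{equation*}
  (m+1)\,\mathbb{P}[X_1=a']\,c_{m,\phi}\ \le\ \frac{\phi+a'}{a'}\,c_{m+1,\phi+a'}.
\end{equation*}
Since $X_1$ is purely atomic, $c_{n,\theta}=\sum_{v}w(v)$, the sum over $n$--tuples $v=(v_1,\dots,v_n)$ of atoms with $v_1+\dots+v_n\le\theta$ and $w(v)=\prod_{j}\mathbb{P}[X_1=v_j]$. As $x_{\min}=0$, fix an atom $a$ with $0<a<a'$, put $n=m+1$ and $\theta=\phi+a$, and consider the map $(v,i)\mapsto(v^{(i)},i)$, where $v^{(i)}$ is $v$ with its $i$--th coordinate changed to $a'$, defined on pairs with $v_1+\dots+v_n\le\theta$ and $v_i=a$: it is a bijection onto the pairs $(v',i)$ with $v'_i=a'$ and $v'_1+\dots+v'_n\le\phi+a'$, and it multiplies weights by $\mathbb{P}[X_1=a']/\mathbb{P}[X_1=a]$. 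Evaluating $\sum_{(v,i)}w\big(v^{(i)}\big)$ through the source gives $(m+1)\,\mathbb{P}[X_1=a']\,c_{m,\phi}$ --- via the exchangeability identity $\sum_{v}w(v)\,\#\{i:v_i=a\}=(m+1)\,\mathbb{P}[X_1=a]\,c_{m,\phi}$ --- whereas through the target it is at most $\frac{\phi+a'}{a'}\,c_{m+1,\phi+a'}$, because each coordinate of $v'$ equal to $a'$ consumes $a'$ of the budget $\phi+a'$; comparing the two evaluations is exactly the estimate.

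Finally I would apply this with $m=n$, $\phi=x-\rho$ and $a'$ a strictly positive atom with $a'\le\rho$ (available since $x_{\min}=0$), so $\phi+a'=x-\rho+a'\le x$; together with the monotonicity $c_{n+1,x-\rho+a'}\le c_{n+1,x}\le c_{n,x}$ this gives
\begin{equation*}
  \frac{c_{n,x-\rho}}{c_{n,x}}\ \le\ \frac{x-\rho+a'}{(n+1)\,\mathbb{P}[X_1=a']\,a'}\ \xrightarrow[\,n\to+\infty\,]{}\ 0,
\end{equation*}
which is the remaining claim; letting $\varepsilon\downarrow0$ then yields $\limsup_{n}\big(c_{n+1,x}/c_{n,x}-\mathbb{P}[X_1=0]\big)\le0$ and hence the asserted limit.

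I expect the surgery estimate to be the only genuine difficulty. The obvious ``soft'' substitutes --- appending a few extra renewal steps whose sum is at most $\rho$, or dropping one step --- relate $c_{n,\cdot}$ only to $c_{n\pm k,\cdot}$, for which nothing better than the trivial $c_{n+k,x}\le c_{n,x}$ is available, and so they deliver at best $c_{n,x-\rho}/c_{n,x}=O(1)$ rather than $o(1)$. Keeping the number of steps fixed and cashing in on the fact that a configuration contributing to $c_{n,x-\rho}$ typically has order $n$ coordinates carrying a small atom that could be pushed up to a slightly larger one without breaching the budget $x$ is what produces the decisive factor $n$. Note that the argument uses only $x_{\min}=0$ (to supply arbitrarily small atoms) and right--continuity of $F$ at $0$; the polynomial bound $F(t)-F(0)=O(t^{\alpha})$ in the definition of $\mathcal{C}_3$, and whether or not $0$ is an atom, should matter only for rates of convergence.
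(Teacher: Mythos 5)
Your proof is correct, and it reaches the conclusion by a genuinely different route than the paper for the decisive step. Both arguments start from the same decomposition (inequality \eqref{eq:discrete_case_spilting}), which reduces the theorem to showing $c_{n,x-\rho}/c_{n,x}\to 0$ for each fixed $\rho\in(0,x)$. The paper proves this with a case split: when $0$ is not an atom it exploits the hypothesis $F(t)-F(0)=O(t^{\alpha})$ through a pigeonhole argument on $k_\alpha=\lceil 1/\alpha\rceil$ blocks of summands, and when $\mathbb{P}[X_1=0]>0$ it conditions on the number of zero coordinates, applies the first case to the conditioned positive variables, and imports a combinatorial swap (zero coordinate exchanged for a small atom) from \cite{Sah25} to control configurations with many zeros. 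Your surgery estimate $(m+1)\,\mathbb{P}[X_1=a']\,c_{m,\phi}\le \frac{\phi+a'}{a'}\,c_{m+1,\phi+a'}$ replaces both cases at once; note that it is exactly the two-way evaluation of $\mathbb{E}\bigl[\#\{i\le m+1:\,X_i=a'\}\,\mathbb{1}_{\{S_{m+1}\le \phi+a'\}}\bigr]$, namely $(m+1)\mathbb{P}[X_1=a']c_{m,\phi}$ by independence and at most $\frac{\phi+a'}{a'}c_{m+1,\phi+a'}$ by the deterministic bound on the multiplicity of the atom $a'$ under the budget constraint, so the auxiliary atom $a$ and the bijection are not even needed. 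What your route buys: no distinction on whether $0$ is an atom, no use of the regularity condition $F(t)-F(0)=O(t^{\alpha})$ (so the limit holds for every discrete nonnegative law with $x_{\min}=0$, and the key estimate only requires one positive atom, hence is not restricted to purely atomic laws), plus the quantitative intermediate bound $c_{n,x-\rho}/c_{n,x}=O(1/n)$ for fixed $\rho$, with your correct diagnosis that the exchangeability factor $n+1$ is what upgrades $O(1)$ to $o(1)$. What the paper's route buys: it stays within the machinery ($\Diamond_t$, the events $B_{n,t,\cdot}$, conditioning on zeros) that is reused for Proposition \ref{theo:: class_2} and for the rate discussions, and it shows how the polynomial control of $F$ near the origin can be exploited, which is the theme the authors pursue when quantifying convergence; but for the statement of Theorem \ref{theo:: class_3} itself your argument is shorter and strictly more general.
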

\noindent
The challenge of attaining a comparable rate of convergence with $\mathcal{C}_3$ compared to $\mathcal{C}_1$ and $\mathcal{C}_2$ is attributed, once more, by the possibility of an infinite number of strictly positive jumps. This makes impossible the use of similar counting techniques as in the proof of Proposition \ref{theo:: class_c2}. However, under the regularity assumption of the function $F$, it is possible to regulate the sizes of the small jumps, thereby providing insight into the number of small jumps.

\subsection{Absolutely continuous random variables}

This section considers the analysis of the limits of ratio sequences with absolutely continuous distributions. In a similar manner, it is anticipated that solely the minor fluctuations will dictate the asymptotic characteristics of the ratio sequences. Nevertheless, conventional decomposition methodologies, such as those outlined in Equation (\ref{eq:discrete_case_spilting}), become unfeasible. Our analysis thus focuses on the density of the renewal processes and its behavior near the origin. In this perspective, we denote as $f^{*n}$ the density function of $S_n$. It is worth mentioning that the convexity of the distribution functions allows for the derivation of tighter convergence rates than those of Theorem \ref{theo:: class_3}.

As previously indicated, the convergence rates derived throughout this section for random variables belonging to the class $\mathcal{C}_4$ stem entirely from the convexity properties of the distribution functions of the variables $S_n$ in a neighborhood of zero. However, one should observe that the distribution function of a $\mathcal{C}_4$-class variable is not necessarily convex in such a neighborhood. As a counterexample, consider a random variable with a $C^1((0,+\infty))$ density $f$ satisfying $f(t) = t^{\alpha - 1}$ in a neighborhood of the origin, with $\alpha \in (0,1)$. The corresponding distribution function is therefore equal to the strictly concave function $F(t) = \alpha^{-1} t^{\alpha}$. Analytic properties on the sequence of self-convoluted density functions $(f^{*n})_{n \geq 1}$ can nevertheless be deduced for density function of a $\mathcal{C}_4$-class variable. The following proposition notably establishes that the operation of self-convolution increases the domain on which the function is non-decreasing.

\begin{propo}\label{propo:self_convolution}
When $f$ is the density function of a $\mathcal{C}_4$-class variable, for any $x>0$, there exists $N_x \geq 1$ such that for any $n \geq N_x$, $f^{*n}$ is non-decreasing on the domain $[0,x]$.
\end{propo}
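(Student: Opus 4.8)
The plan is to rephrase the statement through the distribution functions: $f^{*n}$ is non-decreasing on $[0,x]$ if and only if $F^{*n}$, the distribution function of $S_n$, is convex on $[0,x]$. I would begin by recording the behaviour of $f^{*n}$ near the origin. Since $f$ is regularly varying of index $\alpha-1$ at $0$ and, by hypothesis, has an ultimately monotone derivative near $0$, the self-convolution results of Section~\ref{sec:: main_proofs} (together with \cite[Proposition~1.5.8]{bing:gold:teug:89}) give that $f^{*n}$ is regularly varying of index $n\alpha-1$ at $0$; choosing $n$ with $n\alpha>2$ then yields $f^{*n}\in C^1([0,+\infty))$ with $f^{*n}(0)=(f^{*n})'(0)=0$, and that $(f^{*n})'$ is regularly varying of index $n\alpha-2$ at $0$ with a \emph{strictly positive} leading term (the factor $n\alpha-1>0$ emerges in the constant, regardless of the sign of $\alpha-1$). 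In particular, for each such $n$ there is $\delta_n>0$ on which $f^{*n}$ is non-decreasing.

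The second ingredient is a propagation property. Writing $f^{*(n+1)}=f*f^{*n}$ and using $f^{*n}(0)=0$, one obtains for $t>0$
\begin{equation*}
(f^{*(n+1)})'(t)=\int_0^{t} f(s)\,(f^{*n})'(t-s)\,ds .
\end{equation*}
Hence, whenever $f^{*n}$ is non-decreasing on $[0,a]$, the integrand is non-negative for all $s\in[0,t]$ as soon as $t\le a$, so $f^{*(n+1)}$ is non-decreasing on $[0,a]$ as well. Consequently $\ell_n:=\sup\{a\ge 0:\ f^{*n}\ \text{is non-decreasing on}\ [0,a]\}$ is non-decreasing in $n$ for $n$ large, and positive by the first paragraph. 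The proof then reduces to showing $\ell_n\to+\infty$: given $x>0$, one picks $N_x$ with $\ell_{N_x}\ge x$, and propagation gives $\ell_n\ge x$, i.e. $f^{*n}$ non-decreasing on $[0,x]$, for all $n\ge N_x$.

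To obtain $\ell_n\to+\infty$ I would argue by contradiction, assuming $\ell_n\uparrow\ell_\infty<+\infty$. As each $f^{*n}$ is $C^1$, the maximality of $\ell_n$ forces $(f^{*n})'(\ell_n)=0$ with a sign change there. Evaluating the identity above at $t=\ell_n$ and using that $f$, being regularly varying at $0$, is strictly positive on a fixed interval $(0,\delta_f)$, while $(f^{*n})'\ge 0$ on $[0,\ell_n]$ and $f^{*n}$ genuinely increases away from $0$, one would deduce that for all large $n$ the function $f^{*n}$ is forced to be flat on an interval ending near $\ell_\infty$ and yet has $(f^{*(n+1)})'(\ell_\infty)>0$ — contradicting $\ell_{n+1}=\ell_\infty$. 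Turning this into a genuine proof means lower-bounding the increments $\ell_{n+1}-\ell_n$ so that they cannot sum to a finite limit, and this quantitative step — which is exactly what the self-convolution/regular-variation estimates of Section~\ref{sec:: main_proofs} are designed for — is where I expect the real difficulty to lie. As a consistency check and an alternative route for this last step, the local large-deviation asymptotics $f^{*n}(t)=\exp(-n I(t/n)+o(n))$, uniform for $t\in[0,x]$, together with the strict decrease of the rate function $I$ on $(0,\mathbb{E}[X_1])$, directly force $(f^{*n})'>0$ on $[0,x]$ for $n$ large; this is confirmed by the Gamma$(\alpha,1)$ example, where $f^{*n}$ is the Gamma$(n\alpha,1)$ density and is non-decreasing precisely on $[0,n\alpha-1]$.
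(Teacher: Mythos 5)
Your overall architecture — first get monotonicity of some convolution power on a small interval $[0,\varepsilon]$ near the origin, then propagate — is the same as the paper's, but the decisive step is missing. Your propagation identity only shows that an interval of monotonicity is \emph{preserved} under further convolution (this is exactly the paper's Lemma \ref{lem:: convo_of_increas_func}, which moreover is proved there by a derivative-free comparison); it never \emph{enlarges} the interval. The entire content of the proposition is that the interval can be pushed from $[0,\varepsilon]$ out to an arbitrary $[0,x]$, and this is precisely where your argument stops: the contradiction sketch for $\ell_n\to\infty$ is not a proof (maximality of $\ell_n$ does not force a sign change of $(f^{*n})'$ at $\ell_n$, nothing forces $f^{*n}$ to be ``flat'' near $\ell_\infty$, and no lower bound on $\ell_{n+1}-\ell_n$ is produced), and you explicitly flag this quantitative step as the open difficulty. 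The paper closes it with Lemma \ref{lemm: increas_propagation}: if $g$ is non-decreasing on $[0,\varepsilon]$, then $g^{*2^{2k}}$ is non-decreasing on $[0,(3/2)^{k}\varepsilon]$, proved via the symmetric identity $(h*h)'(x)=2\int_0^{x/2}h'(t)\left[h(x-t)-h(t)\right]dt$ together with a two-step verification that $h(x-t)\ge h(t)$ slightly beyond the current interval of monotonicity; nothing of this kind appears in your sketch. The large-deviation fallback does not repair the gap either: an asymptotic $f^{*n}(t)=\exp(-nI(t/n)+o(n))$ carries multiplicative errors $e^{o(n)}$ that are fully compatible with local non-monotonicity, so it cannot yield $(f^{*n})'>0$; moreover its claimed uniformity on $[0,x]$ concerns arguments $t/n\to 0$, where $I$ diverges for absolutely continuous $X_1$, and is itself unjustified.

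There are also problems in the two ingredients you do use. The opening claims — that $f^{*n}$ is regularly varying of index $n\alpha-1$ at $0$, and that $(f^{*n})'$ is regularly varying of index $n\alpha-2$ with a strictly positive constant — are asserted rather than proven; the natural route (Karamata's Tauberian theorem plus the monotone density theorem) needs ultimate monotonicity of $f^{*n}$, respectively $(f^{*n})'$, near $0$, which is essentially what is to be proven, so the argument risks circularity. The paper sidesteps this with Lemma \ref{lem:: regular_to_increas}: the scaling inequality $f(ct)\le c^{-\beta}f(t)$ with $\beta=1-\alpha/2$ (from Corollary \ref{coro:: regular_var}) propagates through dyadic self-convolutions with exponent $2^k(1+\beta)-1$, which eventually becomes positive and forces monotonicity near $0$. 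Likewise, your formula $(f^{*(n+1)})'=f*(f^{*n})'$ is exactly the delicate point treated in Lemma \ref{lem:: derivation_convo}: for $\mathcal{C}_4$ densities the derivative can fail to be integrable at the origin, so the differentiation requires the corrected formula with boundary terms (or one should avoid differentiation altogether, as in Lemma \ref{lem:: convo_of_increas_func}). These two issues are fixable, but the missing extension mechanism of Lemma \ref{lemm: increas_propagation} is a genuine gap.
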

Important remarks arise from this result. First, for any  i.i.d. random sequence $(X_n)_{n \geq 1}$ that lies in the class $\mathcal{C}_4$ and for any compact set $[0,x]$, there exist $N_x \geq 1$ such that for any $n \geq N_x$, the distribution function of $S_n$ is convex on the set $[0,x]$. Secondly, when $f$ is the density function of a $\mathcal{C}_4$-class variable, $f$ could be a decreasing function on $(0,+\infty)$ and its right limit at zero could also value the infinity. The convolution therefore regularizes the functions involved: after a sufficient number of self-convolutions, the resulting function becomes bounded at zero and, in addition, non-decreasing on an increasingly large interval.
Finally, as long as the density function exhibits sufficiently regular behavior in a neighborhood of the origin, the proposition applies. The function may, in particular, oscillate infinitely many times on an interval $[a,b]$, for some $0<a<b<\infty$. Such chaotic behavior disappears once the function has been convoluted sufficiently many times. \\
From the analytical and geometrical properties of the sequence of distribution functions $(F_{S_n})_{n \geq 1}$ established in Proposition \ref{propo:self_convolution} , more precise approximations can be derived for the sequence of renewal probability ratios. The following theorem establishes the convergence result for sequences of random variables that belong to $\mathcal{C}_4$.

\begin{theo} \label{theo:: class c_5}
Assume that $X_1 \in \mathcal{C}_4$. Then for all $ x > 0$, there exists $C_x > 0$, $N_x \in \mathbb{N}^*$ such that $\forall n \in \mathbb{N}$ such that $n \geq N_x$,
\begin{equation} \label{eq:first_ineq_thm_density}
   \sup_{t \in [0,x]} \left| \dfrac{c_{n+1,t} }{c_{n,t}} \right|  \leq C_x \mathbb{P} \left[ X_1 \leq 1/n \right].
\end{equation}
\end{theo}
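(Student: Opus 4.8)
The plan is to bound $c_{n+1,t}$ directly by $c_{n,t}$, exploiting the strong convexity that the distribution functions of the $S_n$ acquire after many self-convolutions. First I would record the elementary identity obtained by conditioning on the last increment,
\[
  c_{n+1,t}=\int_0^{t} c_{n,t-v}\,f(v)\,dv ,\qquad t>0,\ n\ge 1,
\]
(the contribution of $v=t$ being irrelevant since $\mathbb P[X_1=0]=0$ under $\mathcal C_4$), together with the fact that $c_{n,t}>0$ for all $t>0$, which holds because $F(t)>0$ so that $\mathbb P[X_1\le t/n]^{\,n}>0$.

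Second, I would strengthen Proposition~\ref{propo:self_convolution}. That result gives $N_x$ such that, for $n\ge N_x$, $u\mapsto f^{*n}(u)/u^{0}$ is non-decreasing on $[0,x]$. The key lemma to prove is that convolution increases the admissible exponent: if $u\mapsto g(u)/u^{a}$ and $u\mapsto h(u)/u^{b}$ are non-decreasing on $[0,x]$, then $u\mapsto (g*h)(u)/u^{\,a+b+1}$ is non-decreasing on $[0,x]$, and likewise $s\mapsto s^{-(a+1)}\int_0^s g(u)\,du$ is non-decreasing on $[0,x]$. Both are immediate from the change of variables $u=rs$, which writes $(g*h)(s)=s^{a+b+1}\int_0^1 r^{a}(1-r)^{b}\,\phi(rs)\,\psi((1-r)s)\,dr$ with $\phi=g/u^{a}$ and $\psi=h/u^{b}$ non-decreasing, so that the integrand is non-decreasing in $s$. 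Iterating this along $f^{*n}=f^{*N_x}*f^{*(n-N_x)}$ yields, for $n\ge N_x$ and $j:=\lfloor n/N_x\rfloor$, that $u\mapsto f^{*n}(u)/u^{\,j-1}$ is non-decreasing on $[0,x]$, hence that $s\mapsto c_{n,s}/s^{\,j}$ is non-decreasing on $[0,x]$. The crucial quantitative point is that the exponent $j$ grows linearly in $n$.

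Third, I would plug this in. For $0\le v\le t\le x$ and $n\ge N_x$ the monotonicity of $s\mapsto c_{n,s}/s^{\,j}$ gives $c_{n,t-v}\le\bigl(\tfrac{t-v}{t}\bigr)^{j}c_{n,t}\le e^{-jv/t}c_{n,t}$, whence
\[
  c_{n+1,t}\le c_{n,t}\int_0^{t} e^{-jv/t}f(v)\,dv\le c_{n,t}\,\widehat f(j/t),\qquad \widehat f(\lambda):=\mathbb E\bigl[e^{-\lambda X_1}\bigr].
\]
Since $F$ is regularly varying of index $\alpha>0$ at $0$ for a $\mathcal C_4$ variable (by \cite[Proposition~1.5.8]{bing:gold:teug:89}, as already noted in the text), Karamata's Tauberian theorem gives $\widehat f(\lambda)\sim\Gamma(1+\alpha)\,F(1/\lambda)$ as $\lambda\to+\infty$; as $j/t\ge j/x\to+\infty$ uniformly in $t\in[0,x]$, this yields $\widehat f(j/t)\le 2\Gamma(1+\alpha)\,F(t/j)$ for $n$ large. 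Finally $F(t/j)\le F(x/j)\le F(2N_x x/n)\le C_x'\,F(1/n)$ for $n$ large, using that $j\ge n/(2N_x)$ once $n\ge 2N_x$ and that $F(2N_x x\,s)/F(s)\to(2N_xx)^{\alpha}$ as $s\downarrow 0$ by regular variation and monotonicity of $F$. Combining the three displays gives $c_{n+1,t}\le C_x\,\mathbb P[X_1\le 1/n]\,c_{n,t}$ for every $t\in[0,x]$ and every $n$ beyond a threshold depending only on $x$; taking the supremum over $t$ finishes the proof.

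The main obstacle is the convolution lemma together with the verification that the resulting monotonicity exponent $j$ is genuinely of order $n$: were $j$ only of order $\log n$ or $\sqrt n$, the final estimate $F(x/j)\lesssim F(1/n)$ would break down and the method would give nothing better than a constant bound on the ratio. Once the exponential domination $c_{n,t-v}\le e^{-jv/t}c_{n,t}$ is available, everything else is routine regular-variation and Tauberian bookkeeping.
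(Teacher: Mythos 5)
Your argument is correct, and after the common starting point (Proposition~\ref{propo:self_convolution}) it follows a genuinely different route from the paper. Where the paper imports the domination $\mathbb{P}[S_n\leq x-y_1]/\mathbb{P}[S_n\leq x]\leq((x-y_1)/x)^{n/K_x-1}$ from the proof of \cite[Theorem 4.6]{Sah25} (see \eqref{eq::expo_bound_ratio}), you re-derive an inequality of the same strength from scratch: your scaling lemma (substituting $u=rs$ in $(g*h)(s)$ to show that non-decreasingness of $g/u^{a}$ and $h/u^{b}$ on $[0,x]$ propagates to $(g*h)/u^{a+b+1}$, and similarly for the primitive) gives $s\mapsto c_{n,s}/s^{\lfloor n/N_x\rfloor}$ non-decreasing on $[0,x]$, with the exponent growing linearly in $n$, which is exactly what is needed. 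The second half also diverges: the paper splits the integral $\int_0^x f(y_1)\left(\frac{x-y_1}{x}\right)^{n/K_x-1}dy_1$ at $1/n$ and runs a three-case analysis on $\lim_{t\to0^+}f(t)$ (infinite, finite positive, zero), relying on the technical induction estimate \eqref{eq:integral_estim_proof_thm_density_case}; you instead dominate the whole integral by the Laplace transform $\widehat f(j/t)$ and invoke the Abelian half of Karamata's theorem, $\widehat f(\lambda)\sim\Gamma(1+\alpha)F(1/\lambda)$, together with regular variation of $F$ at $0$ and $j\geq n/(2N_x)$, to land on $C_x\,\mathbb{P}[X_1\leq 1/n]$ in one stroke, uniformly in $t\in(0,x]$. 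Your route is shorter, treats all three regimes of $f$ at the origin simultaneously, and makes the uniformity in $t$ explicit (the paper only remarks on it at the end); what it gives up is the elementary, self-contained integral estimate \eqref{eq:integral_estim_proof_thm_density_case}, which the paper reuses later in the proof of Theorem~\ref{theo:: equivalence_continuous_case}, and fully explicit constants. Two small points to tidy up: your convolution lemma is only ever applied with non-negative exponents (base case $N_x\leq n<2N_x$ coming directly from Proposition~\ref{propo:self_convolution}), which is where the monotonicity-of-the-integrand argument is airtight, so state it that way; and, as in the paper, the supremum should be understood over $t\in(0,x]$ since $c_{n,0}=0$ for absolutely continuous $X_1$.
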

\noindent
This result allows to lighten the relationship between the ratio limits and the small probabilities of $X_1$ with jumps of order $1/n$. Due to the regularly varying assumption of $f$ (see Equation \eqref{rem:limit_beha_slowly_var} in Section \ref{ssect:proof_class_C_5}), this affords upper bounds of polynomial order, given that $n$ is sufficiently large with 
$$\mathbb P \left[ X_1 \leq 1/n \right]\lesssim \dfrac{1}{n} f\left( \dfrac{1}{n}\right)  \lesssim \dfrac{1}{n^{\beta}}$$
for any $0 < \beta < \alpha$. Furthermore, a direct induction allows to deduce that
$$c_{n,x} \leq  \left( C_x \right)^{n-N_x} \prod_{k = N_x}^{n-1} \mathbb{P} \left[ X_1 \leq 1/k \right] \leq  ((N_x-1)!)^\beta \left( C_x C_\beta \right)^{n-N_x} \dfrac{1}{((n-1)!)^\beta} .$$
where $C_\beta$ is a positive constant. We finally extend the latter result to the class $\mathcal{C}_5$. The proof is derived from a similar analysis, given that the singular part of the support is isolated from zero. Consequently, this restricts the consideration to minor fluctuations from the absolutely continuous part of the distribution. The following proposition is thus derived.

\begin{propo} \label{prop:class_C_6}
Assume that $X_1 \in \mathcal{C}_5$. Then for all $x > 0$, there exists a constant $C_x > 0$, $N_x \in \mathbb N^*$ such that for all $n \geq N_x$
$$ \sup_{t \in [0,x]} \left| \dfrac{c_{n+1,t} }{c_{n,t}} \right| \leq C_x \mathbb{P} \left[ X^{\text{cont}} \leq 1/n \right].\\
$$
\end{propo}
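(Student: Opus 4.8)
The plan is to reduce the statement to Theorem \ref{theo:: class c_5} by exploiting that, on the event $\{S_n\le x\}$, only a bounded number of the jumps $X_1,\dots,X_n$ can belong to the atomic part. Put $d:=\inf\{x_i,\,i\in J\}>0$ and $q:=\mathbb P[X_1\in\{x_i,\,i\in J\}]$ (necessarily $q<1$, since $f$ is a genuine positive density near $0$), and realize $(X_k)$ via i.i.d.\ Bernoulli$(q)$ selectors: each $X_k$ is, independently, of \emph{atomic type} with probability $q$ (a value in $\{x_i\}\subset[d,+\infty)$) or of \emph{continuous type} with probability $1-q$, distributed as $X^{\text{cont}}$ with density $f$. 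Let $(Y_k)_{k\ge1}$ be i.i.d.\ with density $f$; since $f$ fulfils the $\mathcal C_4$ conditions, $(Y_k)$ is a $\mathcal C_4$-sequence, so Theorem \ref{theo:: class c_5} supplies $C_x>0$, $N_x\in\mathbb N^*$ with $\tilde c_{\,n+1,s}\le C_x\,\mathbb P[Y_1\le 1/n]\,\tilde c_{\,n,s}$ for all $s\in[0,x]$ and $n\ge N_x$, where $\tilde c_{\,n,s}:=\mathbb P[Y_1+\dots+Y_n\le s]$ (and $\tilde c_{\,n,s}:=0$ for $s<0$), and $\mathbb P[Y_1\le s]=\mathbb P[X^{\text{cont}}\le s]$.

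Grouping the jumps by type and using exchangeability, the first step is the identity, valid for every $t\le x$,
\begin{equation*}
c_{n,t}=\sum_{j=0}^{m}\binom{n}{j}q^{\,j}(1-q)^{\,n-j}\,\mathbb E\!\left[\tilde c_{\,n-j,\;t-R_j}\right],\qquad m:=\lfloor x/d\rfloor,
\end{equation*}
where $R_j$ is the sum of $j$ i.i.d.\ atomic-type jumps, independent of $(Y_k)$; the sum stops at $m$ because $j>x/d$ forces $R_j\ge jd>t$, killing the corresponding term. I would then write the same identity for $c_{n+1,t}$ (with the same truncation index $m$) and compare the two sums term by term. For $n\ge N_x+m$ every index $n-j$, $0\le j\le m$, satisfies $n-j\ge N_x$; moreover on $\{t-R_j\ge0\}$ one has $t-R_j\le x$, so Theorem \ref{theo:: class c_5} gives $\mathbb E[\tilde c_{\,n+1-j,\,t-R_j}]\le C_x\,\mathbb P[Y_1\le 1/(n-j)]\,\mathbb E[\tilde c_{\,n-j,\,t-R_j}]$, the bound being trivial on $\{t-R_j<0\}$.

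What remains is to absorb the harmless index discrepancies. Since $j$ stays in the fixed range $\{0,\dots,m\}$, one has $\binom{n+1}{j}q^j(1-q)^{n+1-j}\le 2\binom{n}{j}q^j(1-q)^{n-j}$ for $n$ large, and $\mathbb P[Y_1\le 1/(n-j)]\le\mathbb P[Y_1\le 1/(n-m)]\le C'\,\mathbb P[Y_1\le 1/n]$, the last step because $s\mapsto\mathbb P[Y_1\le s]=\int_0^s f$ is regularly varying of index $\alpha$ at $0$ (Karamata's theorem, see \cite{bing:gold:teug:89}) while $\tfrac{1/(n-m)}{1/n}\to1$. Feeding these bounds into the two identities and re-summing the binomial weights yields $c_{n+1,t}\le\widetilde C_x\,\mathbb P[X^{\text{cont}}\le 1/n]\,c_{n,t}$ uniformly over $t\in[0,x]$; positivity $c_{n,t}>0$ for $t>0$ (needed for the ratio to be defined) follows from $c_{n,t}\ge\bigl(\mathbb P[X_1\le t/n,\ \text{continuous type}]\bigr)^n>0$ since $f$ is regularly varying of index $\alpha-1>-1$ at $0$, and the endpoint $t=0$ is understood as in Theorem \ref{theo:: class c_5}. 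The only genuinely new ingredient compared with the $\mathcal C_4$ proof is the truncation at $j\le m$: once the atoms are separated from zero they can be crossed only finitely often below a fixed threshold, so the ratio asymptotics are governed solely by the continuous-type small jumps. Since Theorem \ref{theo:: class c_5} does the substantive work, I expect the only mildly delicate point to be the bookkeeping around the conditional expectation — keeping the binomial weights and the random shift $R_j$ under control and checking that $t-R_j$ lands in $[0,x]$ on the relevant event.
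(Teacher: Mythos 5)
Your proposal is correct and follows essentially the same route as the paper's proof: both decompose $\{S_{n}\leq t\}$ according to the number of atomic-type jumps (uniformly bounded by roughly $x/x_{\min}$ since the atoms are separated from zero), apply Theorem \ref{theo:: class c_5} to the conditional continuous-part ratios, bound the combinatorial weight ratio by a constant, and absorb the index shift from $1/(n-j)$ to $1/n$ via the regular variation of $t\mapsto \mathbb{P}[X^{\text{cont}}\leq t]$. The only difference is bookkeeping: the paper sums over explicit tuples of atom values (the events $A_{n,k,z}$), whereas you average over the random atomic sum $R_j$ inside an expectation, which amounts to the same decomposition.
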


In contrast to the discrete distributions, the tightness of the upper bound in Theorems \ref{theo:: class c_5} remains uncertain. Nevertheless, it is possible to obtain a lower bound on the likely order by dropping the uniform distance.
\begin{theo} \label{theo::lower_bound_continuous}
Assume that $X_1\in \mathcal{C}_4$. Then for any $x>0$, there exists $c_x >0$ such that for n sufficiently large,
\begin{align*}
    \dfrac{c_{n+1,x} }{c_{n,x}}  \geq c_x \mathbb{P} \left[ X_{1} \leq  1/n  \right].
\end{align*}

\end{theo}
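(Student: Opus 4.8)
The plan is to extract the lower bound from the first-jump decomposition of $c_{n+1,x}$, keeping only the contribution of a very small first jump, and then to control the resulting sub-threshold probability via Proposition \ref{propo:self_convolution}. Conditioning on $X_1$, which is absolutely continuous with density $f$, one has for every $\delta>0$ and every $n$
\begin{equation*}
c_{n+1,x}=\int_0^x c_{n,x-t}\,f(t)\,dt\ \ge\ \int_0^{\delta/n}c_{n,x-t}\,f(t)\,dt\ \ge\ c_{n,x-\delta/n}\ \mathbb{P}[X_1\le \delta/n],
\end{equation*}
since $t\mapsto c_{n,x-t}$ is non-increasing on $[0,x]$. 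Moreover, since $f$ is regularly varying at $0$ of index $\alpha-1$, the distribution function $F$ is regularly varying at $0$ of index $\alpha$ (Karamata's theorem, see \cite{bing:gold:teug:89}), so $\mathbb{P}[X_1\le \delta/n]/\mathbb{P}[X_1\le 1/n]\to\delta^{\alpha}$ and hence $\mathbb{P}[X_1\le \delta/n]\ge\tfrac12\delta^{\alpha}\mathbb{P}[X_1\le 1/n]$ for $n$ large. It therefore suffices to produce a constant $\delta>0$, depending only on $x$, such that $c_{n,x-\delta/n}\ge\tfrac12 c_{n,x}$ for all large $n$; the theorem then follows with $c_x=\tfrac14\delta^{\alpha}$.

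To get such a $\delta$, I would write $c_{n,x}-c_{n,x-\delta/n}=\mathbb{P}[\,x-\delta/n<S_n\le x\,]=\int_{x-\delta/n}^{x}f^{*n}(s)\,ds$. By Proposition \ref{propo:self_convolution}, $f^{*n}$ is non-decreasing on $[0,x]$ once $n\ge N_x$, which gives at the same time
\begin{equation*}
\int_{x-\delta/n}^{x}f^{*n}(s)\,ds\ \le\ \frac{\delta}{n}\,f^{*n}(x)
\qquad\text{and}\qquad
c_{n,x}\ \ge\ \int_{x-1/n}^{x}f^{*n}(s)\,ds\ \ge\ \frac1n\,f^{*n}(x-1/n).
\end{equation*}
Everything thus reduces to a reverse-doubling estimate $f^{*n}(x)\le K_x\,f^{*n}(x-1/n)$, valid for all large $n$ and some $K_x>0$: granted it, $c_{n,x}-c_{n,x-\delta/n}\le\frac{\delta}{n}K_x f^{*n}(x-1/n)\le\delta K_x\,c_{n,x}$, and the choice $\delta=1/(2K_x)$ closes the argument, yielding $c_x=\tfrac14(2K_x)^{-\alpha}$.

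The hard part will be this reverse-doubling estimate, where the regular variation of $f$ at the origin must be used. I expect to rely on the fact that the self-convolution $f^{*n}$ is regularly varying at $0$ of index $n\alpha-1$, with (in the regularly varying regime) $f^{*n}(s)\sim \tfrac{\Gamma(\alpha)^n}{\Gamma(n\alpha)}\,s^{\,n\alpha-1}L(s)^n$ as $s\to0^+$ -- exactly the type of self-convolution statement for regularly varying functions near zero developed elsewhere in the paper -- which along the rescaling $x-1/n=(1-1/(nx))x$ forces the ratio $f^{*n}(x)/f^{*n}(x-1/n)$ to converge to a finite limit. The genuine obstacle is that the fixed point $x$ need not lie inside the neighbourhood of $0$ on which the exact regularly varying form of $f$, hence the above asymptotics for $f^{*n}$, is guaranteed; one then has to propagate the comparison from a small neighbourhood of $0$ up to $s=x$. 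This I would do by combining (i) Potter-type uniform bounds for the slowly varying factor $L$, carefully tracking how the correction factors accumulate through the $n$-fold convolution so that their product stays bounded away from $0$, with (ii) the monotonicity of $f^{*n}$ on $[0,x]$ from Proposition \ref{propo:self_convolution}, and (iii) the $C^1$ and monotone-derivative regularity built into the definition of $\mathcal{C}_4$. Controlling this accumulation of corrections is the technical core of the proof.
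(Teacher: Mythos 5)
Your overall skeleton is sound and even parallels the paper's final step: the first-jump bound $c_{n+1,x}\ge c_{n,x-\delta/n}\,\mathbb{P}[X_1\le \delta/n]$, the use of regular variation of $F$ at $0$ to replace $\mathbb{P}[X_1\le \delta/n]$ by a constant times $\mathbb{P}[X_1\le 1/n]$, and the reduction to showing that lowering the threshold by $O(1/n)$ costs only a bounded factor, i.e. $c_{n,x-\delta/n}\ge \tfrac12 c_{n,x}$. But that last comparison is precisely the substance of the theorem, and you do not prove it: you reduce it to a density-level reverse-doubling bound $f^{*n}(x)\le K_x f^{*n}(x-1/n)$ uniform in $n$, and the justification you sketch for it does not work. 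The asymptotics $f^{*n}(s)\sim \Gamma(\alpha)^n\Gamma(n\alpha)^{-1}s^{n\alpha-1}L(s)^n$ is a statement as $s\to 0^+$ for each fixed $n$ (and is not established in the paper); it says nothing about the value of $f^{*n}$ at the \emph{fixed} point $x$ as $n\to\infty$, where the error terms are not uniform in $n$. Likewise, Potter-type bounds control the slowly varying factor only in a neighbourhood of the origin; they give you no handle on the contribution to $f^{*n}(x)$ coming from convolution configurations in which one or several increments are of macroscopic size, a region where the class $\mathcal{C}_4$ guarantees nothing beyond $C^1$ regularity (the density may even vanish on intervals away from $0$). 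So the "technical core" you defer is exactly the gap, and the tools you name are not adequate to close it at a fixed $x$ uniformly in $n$.

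The paper closes this gap by working at the level of distribution functions rather than densities, which is the idea you are missing. First it shows that the near-zero inequality $F(ct)\ge c^{\beta}F(t)$ (from Corollary \ref{coro:: regular_var}) can be extended to \emph{all} of $]0,x]$ at the price of a larger exponent $\kappa_x$ depending on $\|f\|_{\infty,[\varepsilon,x]}$ and $F(\varepsilon)$, via an explicit comparison of $F$ with a piecewise-linear minorant and a power function. Then an induction on $k$, carried out directly on the distribution functions through an integration by parts in the convolution, propagates this to $\mathbb{P}[S_k\le t_2-t_1]/\mathbb{P}[S_k\le t_2]\ge (1-t_1/t_2)^{k\kappa_x}$ for all $0<t_1<t_2<x$; taking $t_1=1/n$, $t_2=x$ gives $\mathbb{P}[S_n\le x-1/n]\ge \bigl(1-\tfrac{1}{nx}\bigr)^{n\kappa_x}\mathbb{P}[S_n\le x]$, whose right-hand factor converges to $e^{-\kappa_x/x}>0$ — exactly the uniform-in-$n$ comparison you need, obtained without any density estimate at $x$ and without invoking Proposition \ref{propo:self_convolution}. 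If you want to salvage your route, you would have to prove an analogous global, convolution-stable inequality (either a CDF-level one as in the paper, or a density-level one established by a similar global extension plus induction); as written, the proposal leaves the decisive step unproven and supported by an argument that fails away from the origin.
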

\noindent
Together, Theorems \ref{theo:: class c_5} and \ref{theo::lower_bound_continuous} give the optimal order of the rate of convergence for the sequence  $(c_{n+1,x}/c_{n,x})_{n \in \mathbb{N}}$. It is noteworthy that the proofs of these theorems only merely demonstrate the existence of the constants $c_x$ and $C_x$ without providing explicit formulations. Nevertheless, it is possible to propose an asymptotic equivalent of the ratio sequence when $n$ increases.

\begin{theo} \label{theo:: equivalence_continuous_case}
Assume that $X_1\in\mathcal{C}_4$. Then for any $x>0$ and for any non-negative function $l$ that verifies $\displaystyle \lim_{t\to \infty} l(t) =  \infty$ and $l(t) \leq 2x \log(t)$ for any $t\in [a,\infty)$ for some $a\geq 0$, 
we have
\begin{align*}
    \dfrac{c_{n+1,x} }{c_{n,x}}  \underset{n \to + \infty}{\sim} \int_{0}^{\frac{l(n)}{n}} f(t) \left( 1 - \frac{t}{x} \right)^{\alpha n} dt.
\end{align*}
\end{theo}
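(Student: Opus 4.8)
The plan is to reduce the ratio $c_{n+1,x}/c_{n,x}$ to a convolution integral and show the contribution from jumps larger than $l(n)/n$ is asymptotically negligible. First I would write
\begin{align*}
    c_{n+1,x} = \mathbb{P}[S_{n+1}\leq x] = \int_0^x f(t)\,\mathbb{P}[S_n\leq x-t]\,dt = \int_0^x f(t)\,c_{n,x-t}\,dt,
\end{align*}
so that
\begin{align*}
    \frac{c_{n+1,x}}{c_{n,x}} = \int_0^x f(t)\,\frac{c_{n,x-t}}{c_{n,x}}\,dt.
\end{align*}
The goal is then to show that this integral is asymptotically equivalent to the same integral truncated at $l(n)/n$, with the integrand's ratio $c_{n,x-t}/c_{n,x}$ replaced by $(1-t/x)^{\alpha n}$. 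The key heuristic is that on scales $t\sim 1/n$, the quantity $c_{n,x-t}/c_{n,x}$ should behave like $(1-t/x)^{\alpha n}$: this reflects the fact that the density $f^{*n}$ is, by Proposition \ref{propo:self_convolution}, eventually non-decreasing on $[0,x]$ and regularly varying of index $\alpha n -1$ at zero (a self-convolution property the paper says it establishes), so that $F_{S_n}(x-t)/F_{S_n}(x)\approx ((x-t)/x)^{\alpha n}$ for $t$ small.

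The main steps, in order, would be: (1) establish a two-sided comparison $c_x (1-t/x)^{\alpha n}\le c_{n,x-t}/c_{n,x}\le C_x(1-t/x)^{\alpha n}$ uniformly for $t\in[0,l(n)/n]$ and $n$ large, using convexity of $F_{S_n}$ on $[0,x]$ (Proposition \ref{propo:self_convolution}) together with the regular variation of $f^{*n}$ near $0$; this is where the conditions $l(t)\to\infty$ and $l(t)\le 2x\log t$ enter — the upper bound $2x\log t$ guarantees $(1-l(n)/(nx))^{\alpha n}\approx e^{-\alpha l(n)/x}$ stays bounded below away from too-fast decay, while $l(n)\to\infty$ makes the tail beyond $l(n)/n$ negligible. (2) Bound the tail $\int_{l(n)/n}^x f(t)\,c_{n,x-t}/c_{n,x}\,dt$: here $c_{n,x-t}\le c_{n,x-l(n)/n}$ and one uses the sharp decay $c_{n,x}\le (C_xC_\beta)^{n}/((n-1)!)^\beta$-type estimates, or more directly the comparison in step (1) pushed slightly past $l(n)/n$, to show this tail is $o$ of the main term. (3) On the main piece $[0,l(n)/n]$, show $\int_0^{l(n)/n} f(t)\,c_{n,x-t}/c_{n,x}\,dt \sim \int_0^{l(n)/n} f(t)(1-t/x)^{\alpha n}\,dt$ by a dominated-convergence/uniform-equivalence argument, using that on this shrinking interval the ratio $\big(c_{n,x-t}/c_{n,x}\big)\big/(1-t/x)^{\alpha n}\to 1$ uniformly — which follows from the local uniform convergence version of the regular variation of $f^{*n}$ at $0$ combined with $L'Hôpital$/monotone density arguments. (4) Finally, verify the right-hand integral $\int_0^{l(n)/n} f(t)(1-t/x)^{\alpha n}\,dt$ is genuinely of the claimed order and does not itself vanish, so the equivalence is non-degenerate; a substitution $t=s/n$ and Karamata-type estimates give $\int_0^{l(n)/n} f(t)(1-t/x)^{\alpha n}dt \asymp n^{-1}f(1/n)\int_0^{\infty}s^{\alpha-1}e^{-\alpha s/x}ds$ up to slowly varying factors.

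The hard part will be step (3): obtaining the \emph{uniform} asymptotic equivalence of $c_{n,x-t}/c_{n,x}$ and $(1-t/x)^{\alpha n}$ over the whole moving window $t\in[0,l(n)/n]$, rather than merely for fixed $t$ or at a single scale. This requires controlling the interplay between the $n$-dependence in the exponent $\alpha n$ and the $n$-dependence in the interval length, and it is exactly here that the new self-convolution results for regularly varying functions near zero — together with the eventual convexity from Proposition \ref{propo:self_convolution} and the monotone-density refinement of Karamata's theorem — must be invoked in a quantitative, locally uniform form. A secondary technical nuisance is handling the endpoint behaviour near $t=x$ versus $t=x-l(n)/n$ when propagating the comparison bounds, and making sure the constants $c_x, C_x$ from Theorems \ref{theo:: class c_5} and \ref{theo::lower_bound_continuous} can be taken uniform enough on the relevant sub-intervals.
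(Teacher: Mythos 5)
Your outline reduces everything to step (3): a \emph{uniform} equivalence $c_{n,x-t}/c_{n,x}\sim(1-t/x)^{\alpha n}$ over the moving window $t\in[0,l(n)/n]$, and this is exactly where the argument has a genuine gap rather than a deferred technicality. The tools you invoke cannot deliver it. Regular variation (or eventual monotonicity/convexity, which is all Proposition \ref{propo:self_convolution} actually gives — the paper never proves that $f^{*n}$ is regularly varying of index $\alpha n-1$ at $0$) describes $F_{S_n}$ near the origin, whereas the ratio $F_{S_n}(x-t)/F_{S_n}(x)$ is a statement about $F_{S_n}$ at the \emph{fixed} threshold $x$, far from $0$; nothing in a local-at-zero property of $f^{*n}$ transfers to that point. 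Moreover, even granting an approximation of the form $c_{n,x-t}/c_{n,x}=(1-t/x)^{n\beta_n}$ with $\beta_n\to\alpha$, the pointwise comparison with $(1-t/x)^{n\alpha}$ involves the factor $(1-t/x)^{n(\beta_n-\alpha)}$, which over the window $t\asymp\log n/n$ is controlled only if $\beta_n-\alpha=o(1/\log n)$; the definition of $\mathcal C_4$ gives no rate for the convergence of $tf'(t)/f(t)$ to $\alpha-1$, so the uniform integrand-level equivalence you want is simply not available from the hypotheses.

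The paper's proof avoids both obstacles by working with integrals rather than integrands. It first shows (using the exponential bound from the proof of Theorem \ref{theo:: class c_5} and the lower bound of Theorem \ref{theo::lower_bound_continuous}) that one may restrict to the event that \emph{every} jump is at most $\varepsilon_n=(\alpha+2)xK_x\log n/n$; on that truncated event the probability $\mathbb{P}[S_n\le x-t,\,X_i\le\varepsilon_n\ \forall i]$ is an $n$-fold integral in which the change of variables $x_i=\frac{x-t}{x}y_i$ keeps all arguments of $f$ in a fixed neighborhood of $0$, so the sandwich $c^{\beta_n^--1}f(t)\le f(ct)\le c^{\beta_n^+-1}f(t)$ from Corollary \ref{coro:: regular_var} (with $\beta_n^\pm$ built from $\sup_t|tf'(t)/f(t)-(\alpha-1)|$ via Proposition \ref{prop:slowly_varying}) yields two-sided bounds of the ratio by $\int_0^{\varepsilon_n}f(t)((x-t)/x)^{n\beta_n^\mp}dt$. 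The rate problem above is then resolved not pointwise but by a derivative-in-the-exponent estimate: for $g_n(\beta)=\int_0^{\varepsilon_n}f(t)((x-t)/x)^{n\beta}dt$ one shows $|g_n'(\beta)|\le C\,g_n(\beta)$, whence $g_n(\beta_n^\pm)\sim g_n(\alpha)$ \emph{whatever} the speed at which $\beta_n^\pm\to\alpha$; Lemma \ref{lem: equivalence_inequ} then gives the equivalence with $\int_0^{\varepsilon_n}$, and a final estimate shows the piece over $[l(n)/n,\varepsilon_n]$ is $o(\mathbb{P}[X_1\le 1/n])$. Your steps (1)–(2) and (4) are in the right spirit (and your constants-only two-sided bound would indeed only give the order, not the equivalence), but without the truncation-plus-scaling device and the integral comparison in the exponent, the central equivalence is not established.
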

\noindent
Several important remarks arise from the above result. First, we observe that the asymptotic equivalent is expressed exclusively via the parent density function $f$ and its regularly varying parameter $\alpha$. This allows for a convenient approximation of the ratio sequence from the model settings. It is not anticipated that a simpler expression can be obtained. Indeed, by assuming $f(t)=t^{\alpha-1}$ for $t$ small enough, one can show that the preceding integral yields $x^\alpha B (l(n)/(nx),\alpha,n\alpha+1)$ where $B(x,a,b)$ denotes the incomplete Beta function with parameters $(a,b)$. Note that it is possible to consider any function $l$ such that $\displaystyle l(t) \underset{t \to + \infty}{=} o ( \log (t) )$, provided $l$ tends to infinity when $t$ goes to infinity.  \\

Second, it turns out that it is impossible to consider a bounded function $l$. For any such function $l_0\leq M_0$ with $M_0>0$, the above conclusion would directly implies that for any function $l$ that satisfies the conditions of Theorem \ref{theo:: equivalence_continuous_case}, we have
\begin{align*}
    \int_{\frac{l_0(n)}{n}}^{\frac{l(n)}{n}} f(t) \left( 1 - \frac{t}{x} \right)^{\alpha n} dt \underset{n \to + \infty}{=} o \left( \frac{\mathbb{P} [S_{n+1} \leq x ] }{\mathbb{P} [S_n \leq x ] } \right) \underset{n \to + \infty}{=} o \left( \mathbb{P} \left[ X_1 \leq \frac{1}{n} \right] \right).
\end{align*}
However, for $n$ large enough we can show that
\begin{align*}
    \int_{\frac{l_0(n)}{n}}^{\frac{l(n)}{n}} f(t) \left( 1 - \frac{t}{x} \right)^{\alpha n} dt & \geq \int_{\frac{M_0}{n}}^{\frac{M_0 + 1}{n}} f(t) \left( 1 - \frac{t}{x} \right)^{\alpha n} dt \\
    & \geq \left( 1 - \frac{M_0 +1}{nx} \right)^{\alpha n} \int_{\frac{M_0}{n}}^{\frac{M_0 + 1}{n}} f(t)  dt \\
    & = \left( 1 - \frac{M_0 +1}{nx} \right)^{\alpha n} \left( \mathbb{P} \left[ X_1 \leq \frac{M_0 +1}{nx} \right] - \mathbb{P} \left[ X_1 \leq \frac{M_0 }{nx} \right] \right) \\ & \underset{n \to + \infty}{\sim} \exp \left( - \frac{\alpha ( M_0 +1)}{x} \right) \left[  \left( \frac{ M_0 +1}{x} \right)^\alpha - \left( \frac{ M_0}{x} \right)^\alpha \right] \mathbb{P} \left[ X_1 \leq \frac{1}{n} \right]
\end{align*}
where the asymptotic equivalence results from the regularity assumption of $f$, but contradicts the initial assertion. \\ 

In order to show the practical applicability of our result, we conclude this part with an example when $\alpha=1$ and $f$ is constant on a neighborhood of zero. Let us consider 
$f(t)=c$ for $t \in [0,\epsilon]$. 
Then we obtain for $l(n)/n\leq \epsilon$
\begin{eqnarray*}
\int_{0}^{\frac{l(n)}{n}} f(t) \left( 1 - \frac{t}{x} \right)^{n} dt&=&\dfrac{cx}{ n+1}\left(1-\left(1-\dfrac{l(n)}{nx}\right)^{\alpha n+1}\right)\underset{n \to + \infty}{\sim} \dfrac{cx}{n}=x\,\mathbb{P}[X_1\leq 1/n].
\end{eqnarray*}
Variable such as uniform distributions provide examples to constant density functions on a neighborhood of zero. However, one can notice that such random variables do not belong to the class $\mathcal{C}_4$ since the density function is not of class $C^1((0,+ \infty))$. It is nevertheless worth mentioning that density functions that are piece-wise continuous and derivable with bounded derivatives at the positive discontinuity points belong to the class $C^1((0,+ \infty))$ when sufficiently self-convoluted. In particular, one can prove that for such a density function $f$, $f*f$ is continuous on $(0,+ \infty)$ with piece-wise continuous derivative 
and $f^{*4}$ is of class $C^1((0,+ \infty))$. This property notably permits to extend the class $\mathcal{C}_4$ to a more general class of absolutely continuous random variable. Since this property results from similar arguments as in Lemma \ref{lem:: derivation_convo}, the proof is omitted.

\section{Ratio limits and large deviations} \label{sec:: LDT}

In this section, we explore the relationship between ratio limit theorems and the framework of large deviation theory. Cramér's theorem for large deviations provides asymptotic equivalents for the probability that empirical means will deviate from their expected values. However, these conclusions are insufficient for deriving similar ratio limit theorems. The probabilities $c_{n,x}$ are equivalent to $\mathbb{P} \left[ S_n/n \in \left[0, x/n \right] \right]$ which rather describes the probability for the empirical mean to remain in the neighborhood of zero, and not to a fixed subset. 
Therefore, the proposed ratio limit theorems serve as a complementary result to Cramér's theorem for decreasing subsets to the origin. In the sequel, we propose large deviations results on decreasing subsets for distributions supported on the real line and under specific conditions. In the final part, we compare and contrast the aforementioned results according to the frameworks of large deviations and ratio limit theorems, particularly for distributions in $\mathcal{C}_2$.\\  

\subsection{Large deviations inequalities} 
We first note that for any jump distribution with $- \infty \leq \mathbb{E} [X_1] \leq 0$, the central limit theorem yields
$$
    \lim_{n \xrightarrow{} + \infty } c_{n,x} = \lim_{n \xrightarrow{} + \infty } \mathbb{P} \left[ S_n/n \leq x/n  \right] = \begin{cases}   
    \frac{1}{2}, & \text{if} \; \mathbb{E} [X_1] = 0 , \\
    1, & \text{if} \; \mathbb{E} [X_1] < 0 
    \end{cases} 
$$
which trivially ensures that $\lim_{n\to +\infty}c_{n+1,x}/c_{n,x}=1$.
In this part, we thus focus our study to distributions with $\mathbb{P} [X_1 < 0] > 0$, $\mathbb{P} [X_1 > 0] > 0$ and $ 0 < \mathbb{E} [X_1] \leq +\infty$.  The results that follow are mostly grounded in Cramér’s findings as set forth in his pioneering article \cite{Cra38}. For further details on this work, as well as on the notations from the aforementioned article that we will reuse, we refer the reader to its translated version \cite{Cra22}. As such, we define $Y_n = \mathbb{E} [X_1] - X_n$ for any $n\geq1$ and denote $V$ their common distribution function. Additionally, we introduce the following model functions
$$R(h) = \int_{- \infty}^{+ \infty} e^{hy} dV(y) \quad\text{and}\quad \overline{m}(h) = \dfrac{R'(h)}{R(h)}$$
and suppose that
\begin{assumption} \label{ass:cramer_theo}
$\ $
\begin{enumerate}
    \item There exists a positive real number $a$ such that the moment generating function of $X_1$ is finite on the subset $]-a,a[$, i.e. $R(h)$ is well-defined for $|h| < a$. 
    \item $Y_1$ admits an absolutely continuous component.
    \item If $]-a_2, a_1[$ is the maximal convex subset wherein $R(h)$ converges, the next inequality holds
    \begin{equation} \label{eq:condition_C}
    \mathbb{E} [X_1] < C_1:=\lim_{h \uparrow a_1} \overline{m}(h).
    \end{equation}
\end{enumerate}
\end{assumption}
\noindent 
The first condition also induces the existence of a finite second order moment for the variables $Y_1$ and $X_1$. 
According to the same condition, it additionally appears that $a_1$ and $a_2$ must be positive numbers. Since $\overline{m}$ is an increasing function, as proven in the next theorem, $C_1$ and $C_2 := \lim_{h \downarrow - a_2} - \overline{m} (h)$ are well-defined and positive, possibly infinite. For any $h \in ]-a_2, a_1[$, we consider the \textit{i.i.d.} sequence of random variables $(\overline{Z}_n^h)_{n \in \mathbb{N}^*}$ with common distribution function $\overline{V}$ defined as
\begin{align*}
     \overline{V}(z) = \frac{1}{R(h)} \int_{- \infty}^z e^{hy} dV(y),\quad\forall z \in \mathbb R.
\end{align*}
Note that in this case $\overline{m}(h) = \mathbb{E} [\overline{Z}_n^h]$ and the variable $\overline{Z}_n^h$ admits a finite variance, here denoted $\overline{\sigma}^2 (h) = \mathbb{V} [ \overline{Z}_n^h ]$. The following result proposes asymptotic equivalents for the sequence $(c_{n,x})_{n\geq 1}$ from the same perspective than large deviations.
\begin{theo} \label{ratio_lim_ldp}
Let $x$ be any positive real number. Under Assumption \ref{ass:cramer_theo}, we have 
\begin{align*}
  c_{n,x} \underset{n \to + \infty}{\sim} \frac{e^{-\alpha n + xh_\infty}}{h_\infty \overline{\sigma}(h_\infty ) \sqrt{2 \pi n}}
\end{align*}
where $h_\infty = \arg \max \{ h \in ]-a_2, a_1 [ \mapsto h \mathbb{E} [X_1] - \log (R(h)) \}$ and $\alpha =  h_\infty \mathbb{E} [X_1] - \log (R(h_\infty))$.\\ 
In particular, we obtain that
\begin{align*}
    \lim_{n \to + \infty} \dfrac{c_{n+1,x}}{c_{n,x}} = e^{- \alpha}.
\end{align*}
\end{theo}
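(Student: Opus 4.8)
The plan is to prove a local limit theorem for the sequence $(c_{n,x})_{n\geq 1}$ via an exponential change of measure (tilting), which is the standard mechanism behind Cramér's theorem, and then read off the ratio limit as an immediate consequence of the stated equivalent. First I would rewrite $c_{n,x}=\mathbb{P}[S_n\leq x]$ in terms of the variables $Y_k=\mathbb{E}[X_1]-X_k$, so that $S_n\leq x$ becomes $\sum_{k=1}^n Y_k\geq n\mathbb{E}[X_1]-x$; since $\mathbb{E}[X_1]>0$ and $\mathbb{E}[Y_1]=0$, this is a large-deviation event (the partial sums of the centered variables $Y_k$ must exceed a level growing linearly in $n$). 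The role of Assumption \ref{ass:cramer_theo}(1) is to guarantee that $R(h)=\mathbb{E}[e^{hY_1}]$ is finite on a neighborhood of the origin, hence that the tilted laws $\overline V$ are well-defined; parts (2) and (3) are exactly the hypotheses needed so that the optimal tilt $h_\infty$ lies in the open interval $]-a_2,a_1[$ and so that the tilted variable $\overline Z^{h_\infty}$ has an absolutely continuous component (ensuring a genuine local CLT rather than a lattice one).

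Next I would carry out the tilting: writing $m=\mathbb{E}[X_1]$ and choosing $h=h_\infty$ as the maximizer of $\varphi(h)=hm-\log R(h)$, one has $\varphi'(h_\infty)=0$, i.e. $\overline m(h_\infty)=m$, so that under the tilted measure the variables $\overline Z^{h_\infty}_k$ are centered with variance $\overline\sigma^2(h_\infty)$. A direct computation gives
\begin{align*}
c_{n,x}=\mathbb{P}\Big[\sum_{k=1}^n Y_k\geq nm-x\Big]=R(h_\infty)^n\,\mathbb{E}_{\overline V}\Big[e^{-h_\infty \sum_{k=1}^n \overline Z^{h_\infty}_k}\,\mathbb{1}_{\{\sum_k \overline Z^{h_\infty}_k\geq -x\}}\Big],
\end{align*}
and since $R(h_\infty)^n=e^{-\alpha n+h_\infty m n}\cdot e^{-h_\infty m n}\cdots$ — more precisely $R(h_\infty)=e^{h_\infty m-\alpha}$ — the prefactor contributes $e^{-\alpha n}$ up to the factor $e^{-h_\infty mn}$ which is absorbed by recentering. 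Setting $T_n=\sum_{k=1}^n \overline Z^{h_\infty}_k$, the expectation becomes $\mathbb{E}[e^{-h_\infty T_n}\mathbb{1}_{\{T_n\geq -x\}}]$; by the local CLT for the density of $T_n$ (valid because $\overline Z^{h_\infty}$ has an absolutely continuous component, with Gaussian density $\tfrac{1}{\overline\sigma(h_\infty)\sqrt{2\pi n}}e^{-u^2/(2n\overline\sigma^2)}$ near $u=0$ after a smoothing/Stone-type argument), one gets
\begin{align*}
\mathbb{E}\big[e^{-h_\infty T_n}\mathbb{1}_{\{T_n\geq -x\}}\big]\underset{n\to\infty}{\sim}\frac{1}{\overline\sigma(h_\infty)\sqrt{2\pi n}}\int_{-x}^{+\infty}e^{-h_\infty u}\,du=\frac{e^{x h_\infty}}{h_\infty\,\overline\sigma(h_\infty)\sqrt{2\pi n}},
\end{align*}
since $h_\infty>0$ makes the integral converge. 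Combining the two displays yields the claimed equivalent $c_{n,x}\sim e^{-\alpha n+xh_\infty}/(h_\infty\overline\sigma(h_\infty)\sqrt{2\pi n})$.

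Finally, the ratio limit is immediate: from the equivalent,
\begin{align*}
\frac{c_{n+1,x}}{c_{n,x}}\underset{n\to\infty}{\sim}\frac{e^{-\alpha(n+1)+xh_\infty}}{e^{-\alpha n+xh_\infty}}\cdot\sqrt{\frac{n}{n+1}}\;=\;e^{-\alpha}\sqrt{\frac{n}{n+1}}\;\longrightarrow\;e^{-\alpha},
\end{align*}
so $\lim_{n\to\infty}c_{n+1,x}/c_{n,x}=e^{-\alpha}$. I expect the main obstacle to be the rigorous justification of the local limit estimate for $\mathbb{E}[e^{-h_\infty T_n}\mathbb{1}_{\{T_n\geq -x\}}]$: one cannot simply invoke a density local CLT because $\overline Z^{h_\infty}$ need only have an \emph{absolutely continuous component}, not an absolutely continuous law, so the argument must decompose $T_n$ accordingly (isolate a subset of indices carrying the smooth part, condition on the number of such indices via a Bernoulli thinning, and apply a local CLT to that smooth sub-sum while controlling the rest), and must handle the interplay between the exponential weight $e^{-h_\infty u}$ and the tail of the density uniformly in $n$. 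This is essentially Cramér's original analysis adapted to the present setting; once it is in place, everything else is bookkeeping with the Legendre transform and the definition of $\alpha$.
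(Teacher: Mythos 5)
Your proposal is correct in substance, but it takes a genuinely different route from the paper. You tilt once, at the fixed parameter $h_\infty$ solving $\overline{m}(h_\infty)=\mathbb{E}[X_1]$, and reduce the theorem to the single estimate $\mathbb{E}\big[e^{-h_\infty T_n}\mathbb{1}_{\{T_n\geq -x\}}\big]\sim e^{xh_\infty}/\big(h_\infty\overline{\sigma}(h_\infty)\sqrt{2\pi n}\big)$ for the centered tilted sum $T_n$ (the Bahadur--Rao mechanism); note that in your displayed identity the indicator should read $\{\sum_k \overline{Z}_k^{h_\infty}\geq n\mathbb{E}[X_1]-x\}$ before recentering, though your subsequent bookkeeping with $R(h_\infty)=e^{h_\infty\mathbb{E}[X_1]-\alpha}$ lands on the correct $c_{n,x}=e^{-\alpha n}\mathbb{E}[e^{-h_\infty T_n}\mathbb{1}_{\{T_n\geq-x\}}]$. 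The paper instead follows Cramér's original scheme: it tilts at an $n$-dependent parameter $h_n$ defined by $\overline{m}(h_n)=\mathbb{E}[X_1]-x/n$, so that the boundary sits exactly at the tilted mean, and the factor $e^{xh_\infty}$ then arises not from $\int_{-x}^{\infty}e^{-h_\infty u}\,du$ but from $e^{-(\alpha_n-\alpha)n}\to e^{h_\infty x}$, obtained by expanding $\alpha_n-\alpha$ using $h_n-h_\infty=O(1/n)$. The analytic core is the same in both routes: a CLT or Berry--Esseen bound is insufficient because the relevant window has width $O(1/\sqrt{n})$ after standardization, and you correctly identify that one needs Cramér's expansion $\overline{F}_{n,h}=\Phi+P_{2,h}(y)e^{-y^2/2}/\sqrt{n}+M_{n,h}$ with $\sup_y|M_{n,h}(y)|\leq g(h)/n$, which is precisely where the absolutely continuous component in Assumption \ref{ass:cramer_theo} enters. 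Your fixed-tilt route spares you the uniformity-in-$h$ issue the paper must handle (continuity of the expansion coefficients over the compact set $H=\{h_\infty\}\cup\{h_n,\ n\}$), at the price of carrying the offset $-x$ (of order $1/\sqrt{n}$ after standardization) inside the local estimate; the paper's moving tilt makes the boundary clean but requires both the uniform control and the expansion of $\alpha_n-\alpha$. Either way the heavy lifting is Cramér's analysis, which you acknowledge rather than reprove; to make your sketch a complete proof you would need to write out that weighted local estimate (via the decomposition into absolutely continuous and singular parts that you indicate, or by invoking a sharp large-deviations theorem of Bahadur--Rao type valid under Cramér's condition).
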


\begin{remark}
While this falls outside the scope of the present work, it is worth noting that the proof of the previous theorem can be extended to yield concentration results. Under Assumption \ref{ass:cramer_theo} and for any $c \in ]0,\mathbb{E} [X_1][$, we also have that
\begin{align*}
   \mathbb{P} \left[ S_n \leq cn + x  \right] \underset{n \to + \infty}{\sim} \frac{e^{-\alpha_c n + xh_{\infty,c}}}{h_{\infty,c} \overline{\sigma}(h_{\infty,c} ) \sqrt{2 \pi n}},
\end{align*}
where $h_{\infty,c} = \arg \max \{ h \in ]-a_2, a_1 [ \mapsto h c - \log (R(h)) \}$ and $\alpha_c = h_{\infty,c} c - \log (R(h_{\infty,c}))$.
\end{remark}

\paragraph{Discussion about \eqref{eq:condition_C} in Assumption \ref{ass:cramer_theo}:} 
the third condition of the aforementioned assumptions is essential to obtaining asymptotic equivalents. Although it may seem complicated at first, several noteworthy observations can be made. If one denote $R_{X_1}$ the moment generating function of $X_1$ and $\overline{m}_{X_1}=R'_{X_1}/R_{X_1}$ with
$$R_{X_1}(h) = \int_{- \infty}^{+ \infty} e^{hy} dF(y)$$
then we have for any $h \in ]-a_2,a_1[$
\begin{align*}
    \overline{m} (h) = \mathbb{E} [X_1] - \overline{m}_{X_1} (-h)
\end{align*}
so that \eqref{eq:condition_C} is equivalent to $\lim_{h \downarrow -a_1} \overline{m}_{X_1}(h)< 0$. In line with our initial settings, if $\mathbb{P} [X_1 < 0 ] = 0 $, straightforward algebra proves that  $\overline{m}_{X_1}$ is a non-negative function since for any $h \in ]-a_1,a_2[$
\begin{align*}
    \overline{m}_{X_1} (h) = \frac{1}{R_{X_1} (h) } \int_{- \infty}^{+ \infty} y e^{hy} dF(y) = \frac{1}{R_{X_1} (h) } \int_{0}^{+ \infty} y e^{hy} dF(y) \geq 0.
\end{align*}
This, in turn, demonstrates that the necessary assumptions for Theorem \ref{ratio_lim_ldp} are not met if $X_1$ is almost surely non-negative. Conversely, a number of cases persist that can be addressed independently. In the next lemma, we first demonstrate that the condition is met if $a_1=+\infty$.
\begin{lemma} \label{lem:Condition_C_true}
Assume that $\mathbb P(X_1 < 0) > 0$ and that $a_1 = + \infty$. Then 
$\lim_{h \downarrow -a_1} \overline{m}_{X_1}(h) <0$.
\end{lemma}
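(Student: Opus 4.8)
The plan is to read $\overline m_{X_1}(h)=R'_{X_1}(h)/R_{X_1}(h)$ as the mean of the exponentially tilted law $d\mu_h(y)=e^{hy}\,dF(y)/R_{X_1}(h)$, which under the hypothesis $a_1=+\infty$ is well defined for every $h<a_2$, and in particular for all $h<0$, so that ``$\lim_{h\downarrow -a_1}$'' genuinely means ``$\lim_{h\to-\infty}$''. The heuristic is that tilting by $e^{hy}$ with $h\to-\infty$ forces $\mu_h$ to concentrate on the most negative part of the support of $F$; since $\mathbb P(X_1<0)>0$ that part sits strictly below $0$, so the tilted mean $\overline m_{X_1}(h)=\mathbb E_{\mu_h}[X_1]$ must become negative for $h$ sufficiently negative. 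I would first record that all the integrals in sight are finite and that $\overline m_{X_1}$ is nondecreasing, since $\overline m_{X_1}'(h)=\mathbb E_{\mu_h}[X_1^2]-\mathbb E_{\mu_h}[X_1]^2=\mathrm{Var}_{\mu_h}(X_1)\ge 0$; hence $\lim_{h\to-\infty}\overline m_{X_1}(h)$ exists and equals $\inf_h\overline m_{X_1}(h)$, and it suffices to show this limit is negative.

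Concretely, I would fix $\varepsilon>0$ with $p:=\mathbb P(X_1\le -2\varepsilon)>0$, which is possible because $\{X_1<0\}=\bigcup_{k\ge1}\{X_1\le -2/k\}$. Restricting the defining integral of $R_{X_1}$ to $\{y\le -2\varepsilon\}$ gives, for every $h<0$, the lower bound $R_{X_1}(h)\ge p\,e^{2\varepsilon|h|}$. Then I would split $\overline m_{X_1}(h)=\mathbb E_{\mu_h}[X_1\mathds 1_{\{X_1\le-\varepsilon\}}]+\mathbb E_{\mu_h}[X_1\mathds 1_{\{-\varepsilon<X_1\le0\}}]+\mathbb E_{\mu_h}[X_1\mathds 1_{\{X_1>0\}}]$ and estimate each piece: using $e^{hy}\le e^{\varepsilon|h|}$ on $\{y>-\varepsilon\}$ together with the lower bound on $R_{X_1}(h)$ yields $\mu_h(X_1>-\varepsilon)\le p^{-1}e^{-\varepsilon|h|}\to0$, whence the middle term is bounded in absolute value by $\varepsilon\,\mu_h(X_1>-\varepsilon)\to0$; using $\sup_{y>0}y\,e^{hy}=1/(e|h|)$ for $h<0$ gives $0\le \mathbb E_{\mu_h}[X_1\mathds 1_{\{X_1>0\}}]\le (e|h|)^{-1}p^{-1}e^{-2\varepsilon|h|}\to0$; and the first term is at most $-\varepsilon\,\mu_h(X_1\le-\varepsilon)=-\varepsilon\bigl(1-\mu_h(X_1>-\varepsilon)\bigr)$. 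Adding up, $\limsup_{h\to-\infty}\overline m_{X_1}(h)\le-\varepsilon<0$, and by the monotonicity noted above the limit itself is $\le-\varepsilon<0$, which is exactly the claim (and, via the identity $\overline m(h)=\mathbb E[X_1]-\overline m_{X_1}(-h)$, it also re-expresses condition \eqref{eq:condition_C} in this regime).

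The one genuinely delicate point is that this is an $\infty/\infty$ limit: both $R'_{X_1}(h)=\int y\,e^{hy}\,dF(y)$ and $R_{X_1}(h)=\int e^{hy}\,dF(y)$ diverge as $h\to-\infty$ (the denominator to $+\infty$ because of the mass near $-2\varepsilon$), so one cannot argue on the numerator alone. The device that makes the estimates close is the use of two separate scales — the level $-2\varepsilon$ for the lower bound on $R_{X_1}$ versus the cutoff $-\varepsilon$ in the numerator — which produces the surplus exponential factor $e^{-\varepsilon|h|}$ that drives the contributions of $\{X_1>-\varepsilon\}$ to zero while the $\mu_h$-mass of $\{X_1\le-\varepsilon\}$ tends to $1$. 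Everything else — finiteness of the moments, differentiation under the integral sign to compute $R'_{X_1},R''_{X_1}$ — is routine once $a_1=+\infty$ is in force.
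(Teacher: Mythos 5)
Your proof is correct and follows essentially the same route as the paper's: both isolate a strictly negative piece of the support carrying positive mass and use two scales (your $-2\varepsilon$ versus $-\varepsilon$; the paper's $b$, $3b/4$ versus $b/2$) so that the exponentially growing contribution of the negative part dominates the bounded contributions from near zero and from $(0,+\infty)$, forcing the tilted mean below a fixed negative level in the limit $h\to-\infty$. The differences are cosmetic: your tilted-measure phrasing and monotonicity remark, your bound $\sup_{y>0} y e^{hy} = 1/(e|h|)$ in place of the paper's use of $\int_0^{\infty} y\, dF(y) < \infty$, and the paper's additional observation that the limit is $-\infty$ when $X_1$ is unbounded below.
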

Similar analysis ensures that condition $(\ref{eq:condition_C})$ can still hold when $0 < a_1 < + \infty$. The function $R_{X_1}$ is a convex and log-convex function, so either $ \lim_{h \downarrow -a_1} R_{X_1} (h)$ is finite positive or infinite. Since $\overline{m}_{X_1} = \left(\log (R_{X_1})\right)'$, the function $\log R_{x_1}$ admits a continuous (even $\mathcal{C}^1$) extension at $-a_1$, and $\lim_{h \downarrow -a_1} \overline{m}_{X_1}(h) = - \infty$ if the limit of $R_{X_1}$ is finite. However, additional assumptions are required if the latter limit is infinite.  
\begin{exx}
Consider $X_1$ with the density function defined on $\mathbb{R}$ given by
\begin{align*}
    f_1 (x) = \frac{1}{2} e^{ -| x-1 |}\quad\text{or}\quad f_2 (x) & = \frac{c}{1 + (x-1)^4} e^{ -| x-1 |}
\end{align*}
where $c>0$ is a normalizing constant. In both cases, the random variable is supported on $\mathbb{R}$ with $\mathbb{E} [X_1] = 1$, $\mathbb{P} [X_1 \leq t] > 0$ for any $t<0$ and $a_1=-a_2=1$.\\

When the density is set to $f_1$, straightforward computations show that the functions $R_{X_1}$ and $-\overline{m}_{X_1}$ behave like $h\to (h-1)^{-1}$ near $-1$. Thus $\lim_{h \downarrow -a_1} R_{X_1} (h) = + \infty$ and Condition \eqref{eq:condition_C} is satisfied. If the density is given by $f_2$, 
$$ \lim_{h \downarrow -a_1} R_{X_1} (h) = c\int_{- \infty}^1 \frac{e^{-1}}{1 + (x-1)^4} dx +c\int_{1}^{+ \infty} \frac{1}{1 + (x-1)^4} e^{-2x+1} dx  < \infty$$ 
and $\overline{m}_{X_1}$ admits a finite limit at $-1$ with
\begin{align*}
   \overline{m}_{X_1}(-1) & = ce^{-1}\left(\int_{- \infty}^0 (x+1) \frac{1}{1 + x^4} dx + \int_{0}^{+ \infty} (x+1) \frac{e^{-2x}}{1 + x^4} dx\right).
\end{align*}
It then follows that $\int_{- \infty}^0  \frac{x+1}{1 + x^4} dx = \frac{\sqrt{2}-1}{4}\pi$ and $\int_{0}^{+ \infty} (x+1) \frac{e^{-2x}}{1 + x^4} dx \approx 0.57$ implying that $\overline{m}_{X_1}(-1) >0$ and Condition \eqref{eq:condition_C} is not satisfied.
\end{exx}

\subsection{Comparison with ratio limits} 
We finally discuss the discrepancy between the sharpness of the bounds for the ratio sequences when either the ratio limits or the large deviations methodologies are taken into consideration. We consider $X_1 \in \mathcal C_2$ and notice that in this case, the function $\Lambda : \lambda \mapsto \log \mathbb{E} \left[e^{\lambda X_1} \right]$ is well defined and finite on $]- \infty,0]$. Moreover
$ \Lambda^*(0) = - \lim_{\lambda \to - \infty} \Lambda ( \lambda) = - \log(\mathbb P(X_1=0))$.

\begin{lemma}[Using large deviation techniques] \label{lem:: large_dev_tech}
Assume $X_1\in\mathcal C_2$. For any $C>1$, we have for $n$ large enough
\begin{align*}
    \log \mathbb{P}[X_1=0] \leq \frac{1}{n} \log \mathbb{P} \left[ S_n \leq x  \right] &  
    \leq \log \mathbb{P}[X_1=0] +  C\frac{x}{x_{\min}} \frac{ \log n}{ n}.
\end{align*}
\end{lemma}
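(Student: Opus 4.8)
The plan is to combine the obvious lower bound with a one‑sided Chernoff (large‑deviation) estimate, the whole difficulty being the calibration of the exponential tilt on the right logarithmic scale. Throughout, write $p_0:=\mathbb{P}[X_1=0]$; since $0\in X_1(\Omega)$ by definition of $\mathcal{C}_2$ we have $p_0>0$, and $p_0<1$ because $X_1$ is not a.s. zero. In particular $\mathbb{P}[S_n\le x]\ge p_0^n>0$, so all the quantities in the statement are well defined.

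For the lower bound I would simply note the inclusion $\{X_1=\cdots=X_n=0\}\subset\{S_n\le x\}$, which gives $\mathbb{P}[S_n\le x]\ge p_0^{\,n}$ and hence $\tfrac1n\log\mathbb{P}[S_n\le x]\ge\log p_0=\log\mathbb{P}[X_1=0]$, for every $n$.

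For the upper bound, fix $\lambda<0$. Since $\{S_n\le x\}=\{e^{\lambda S_n}\ge e^{\lambda x}\}$, Markov's inequality together with independence yields
$$\mathbb{P}[S_n\le x]\le e^{-\lambda x}\,\mathbb{E}\!\left[e^{\lambda S_n}\right]=e^{-\lambda x}\bigl(\mathbb{E}[e^{\lambda X_1}]\bigr)^n .$$
Because every strictly positive atom of $X_1$ is $\ge x_{\min}$ and $\lambda<0$, one has $\mathbb{E}[e^{\lambda X_1}]\le p_0+e^{\lambda x_{\min}}\mathbb{P}[X_1>0]\le p_0+e^{\lambda x_{\min}}$; taking logarithms and using $\log(1+u)\le u$ gives, for every $\lambda<0$,
$$\frac1n\log\mathbb{P}[S_n\le x]\le-\frac{\lambda x}{n}+\log p_0+\frac{e^{\lambda x_{\min}}}{p_0}.$$
This is essentially $\Lambda^\ast(0)$ up to the two error terms $-\lambda x/n$ and $e^{\lambda x_{\min}}/p_0$, which must be balanced.

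The hard part — really the only nontrivial choice — is to pick $\lambda=\lambda_n$ so that both error terms are $O(\log n/n)$. Given $C>1$, I would fix $a\in(1,C)$ and set $\lambda_n=-\dfrac{a\log n}{x_{\min}}$, so that $e^{\lambda_n x_{\min}}=n^{-a}$ and $-\lambda_n x/n=\dfrac{ax}{x_{\min}}\cdot\dfrac{\log n}{n}$. Since $a>1$, the residual satisfies $n^{-a}/p_0=o(\log n/n)$; choosing $\varepsilon>0$ with $a+\varepsilon x_{\min}/x\le C$, we get for all $n$ large enough
$$\frac1n\log\mathbb{P}[S_n\le x]\le\log p_0+\Bigl(\frac{ax}{x_{\min}}+\varepsilon\Bigr)\frac{\log n}{n}\le\log\mathbb{P}[X_1=0]+C\,\frac{x}{x_{\min}}\,\frac{\log n}{n},$$
which is the claimed inequality. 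In short, there is no genuine obstacle: the scaling $\lambda_n\sim-\log n$ is forced by wanting the tilt cost $-\lambda_n x/n$ to be of order $\log n/n$, and it is precisely this scaling that produces the factor $x/x_{\min}$ in the rate, while the constraint $a>1$ (hence $C>1$) is exactly what makes $e^{\lambda_n x_{\min}}/p_0$ negligible against $\log n/n$.
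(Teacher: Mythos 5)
Your proof is correct and takes essentially the same route as the paper: the exponential Chebyshev/Chernoff bound $\mathbb{P}[S_n\le x]\le e^{-\lambda x}\bigl(\mathbb{E}[e^{\lambda X_1}]\bigr)^n$ combined with the same moment-generating-function estimate $\mathbb{E}[e^{\lambda X_1}]\le \mathbb{P}[X_1=0]+e^{\lambda x_{\min}}\mathbb{P}[X_1>0]$ and a tilt of order $-\log n/x_{\min}$; the only cosmetic difference is that you plug in the explicit near-optimal choice $\lambda_n=-a\log n/x_{\min}$ with $a\in(1,C)$ and use the slack $C-a$ to absorb the remainder, whereas the paper maximizes the surrogate function exactly (its $y_n\sim \tfrac{1}{x_{\min}}\log(x/n)$) and absorbs an $O(1/n)$ term via $C>1$. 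Your explicit treatment of the trivial lower bound $\mathbb{P}[S_n\le x]\ge \mathbb{P}[X_1=0]^n$ is fine and is left implicit in the paper.
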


\begin{lemma}[Using the ratio limit results] \label{lem:: ratio_limit_tech}
Assume $X_1\in\mathcal C_2$. We have for $n$ large enough
$$
    \log \mathbb{P} [X_1 = 0 ] + \frac{ c_x}{ \mathbb{P} [X_1 = 0 ]} \frac{\log n}{n} \leq \frac{1}{n} \log \mathbb{P} [ S_n \leq x ]  \leq \log \mathbb{P} [X_1 = 0 ] + \frac{ C_x}{\mathbb{P} [X_1 = 0 ]} \frac{\log n}{n}
$$
where $C_x$ and $c_x$ are any arbitrary constants verifying $0< c_x < \mathbb{P} [X_1 = 0] M_x < C_x$, where $M_x$ is defined in Proposition \ref{theo:: class_c2}.
\end{lemma}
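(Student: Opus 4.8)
The plan is to integrate the asymptotic equivalence of Proposition \ref{theo:: class_c2} along the telescoping decomposition of $\log c_{n,x}$. Since $X_1\in\mathcal C_2$ has $0$ as an atom, $c_{n,x}\geq \mathbb P[X_1=0]^n>0$ for every $n$, so all logarithms below are well defined; note also that the stated bounds are nonvacuous only when $M_x\geq 1$ (if $M_x=0$ then $c_{n,x}=\mathbb P[X_1=0]^n$ exactly and there is no admissible $c_x$), so we may assume $M_x\geq 1$.

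First I would rewrite Proposition \ref{theo:: class_c2} as
$$
\frac{c_{n+1,x}}{c_{n,x}}=\mathbb P[X_1=0]\left(1+\frac{M_x}{n}+\frac{\varepsilon_n}{n}\right),\qquad \varepsilon_n\xrightarrow[n\to\infty]{}0,
$$
which is licit because $c_{n+1,x}/c_{n,x}\to\mathbb P[X_1=0]>0$ (a consequence of the inequality displayed just before the class definitions). Taking logarithms and using $\log(1+u)=u+O(u^2)$ near $u=0$ gives
$$
\log c_{n+1,x}-\log c_{n,x}=\log\mathbb P[X_1=0]+\frac{M_x}{n}+\frac{\varepsilon_n'}{n},\qquad \varepsilon_n'\xrightarrow[n\to\infty]{}0.
$$
Summing this identity from $k=N$ to $k=n-1$, for a fixed $N$ large enough that the expansion holds, yields
$$
\log c_{n,x}=\log c_{N,x}+(n-N)\log\mathbb P[X_1=0]+M_x\sum_{k=N}^{n-1}\frac1k+\sum_{k=N}^{n-1}\frac{\varepsilon_k'}{k}.
$$

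Next I would control the two sums. One has $\sum_{k=N}^{n-1}\frac1k=\log n+O(1)$, while $\sum_{k=N}^{n-1}\frac{\varepsilon_k'}{k}=o(\log n)$ by a Cesàro/Stolz argument: given $\eta>0$, choose $K$ with $|\varepsilon_k'|<\eta$ for $k\geq K$, so the tail is bounded by $\eta(\log n+O(1))$ and the head is a fixed constant. Dividing through by $n$, the terms $\log c_{N,x}/n$ and $N\log\mathbb P[X_1=0]/n$ are $O(1/n)=o(\log n/n)$, whence
$$
\frac1n\log c_{n,x}=\log\mathbb P[X_1=0]+M_x\,\frac{\log n}{n}+o\!\left(\frac{\log n}{n}\right).
$$
Finally, since $\dfrac{c_x}{\mathbb P[X_1=0]}<M_x<\dfrac{C_x}{\mathbb P[X_1=0]}$ are \emph{strict} inequalities, the $o(\log n/n)$ term is eventually negligible against the two positive gaps $\bigl(M_x-\tfrac{c_x}{\mathbb P[X_1=0]}\bigr)\tfrac{\log n}{n}$ and $\bigl(\tfrac{C_x}{\mathbb P[X_1=0]}-M_x\bigr)\tfrac{\log n}{n}$, so for $n$ large enough
$$
\frac{c_x}{\mathbb P[X_1=0]}\,\frac{\log n}{n}\ \leq\ \frac1n\log c_{n,x}-\log\mathbb P[X_1=0]\ \leq\ \frac{C_x}{\mathbb P[X_1=0]}\,\frac{\log n}{n},
$$
which is exactly the claim. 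The only genuinely delicate point is the estimate $\sum_{k=N}^{n-1}\varepsilon_k'/k=o(\log n)$; the rest is bookkeeping, plus the one-line check that Proposition \ref{theo:: class_c2} is compatible with taking logarithms, i.e. that the ratio converges to the strictly positive number $\mathbb P[X_1=0]$.
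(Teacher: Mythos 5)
Your proof is correct and follows essentially the same route as the paper's: both convert Proposition \ref{theo:: class_c2} into the telescoping product/sum $\log c_{n,x}=\sum_k\log\left(c_{k+1,x}/c_{k,x}\right)$, expand $\log(1+u)$, and compare with harmonic sums $\sum 1/k\approx\log n$, with the strict gaps $c_x<\mathbb{P}[X_1=0]M_x<C_x$ absorbing the error terms. Your version even yields the slightly sharper statement $\frac1n\log c_{n,x}-\log\mathbb{P}[X_1=0]\sim M_x\frac{\log n}{n}$, but the underlying argument is the same as in the paper.
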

\noindent
It is evident that the two lemmas provide asymptotic bounds for $\left( \frac{1}{n} \log \mathbb{P} [ S_n \leq x ] \right)_{n \in \mathbb{N}^*}$.  In contrast to the approach with ratio limit theorems, the large deviation results do not allow the attainment of precise lower bounds for ratio sequences. Furthermore, the constants $C_x$ and $c_x$, via the definition of $M_x$, from Lemma \ref{lem:: ratio_limit_tech} provide further insights regarding the optimal constants in the inequalities. Finally, it should be noted that the aforementioned lower and upper limits do not permit the re-establishment of the ratio limit theorems that was proposed in Section  \ref{sect:: main_results}.

\bibliographystyle{plain}
\bibliography{bibli}

\newpage

\section{Proofs of the results} \label{sec:: main_proofs}

\subsection{Proofs for discrete random variables}

In order to prove the different Theorems in the cases of discrete random variables, we introduce several common notations that will be useful along the different proofs. 
\paragraph{Notations}
For any random variable $X_1 \in \mathcal{C}_2$, the atom $0$ is an isolated point of the support and we consequently define $$x_{\min} = \inf \{  X_1( \Omega) \backslash \{ 0\} \}, $$ the infinimum of the positive atoms. $x_{\min}$ is necessarily a positive quantity. \\
Recall that for such random variable and for any $t>0$, the quantity $M_{\max,t}$ that denotes the maximal number of variables $(X_i,i=1,\ldots,n)$ that take positive values, when studying the event $ \{ S_n \leq t \}$ for some $n \in \mathbb{N}^*$.\\
More generally, for any random variable from the classes $\mathcal{C}_2$ and $\mathcal{C}_3$ and any $t>0$,
\begin{align*}
    \Diamond_t  =  \left\{ (x_{i_1}, \dots ,x_{i_k}) \in (X_1(\Omega)\backslash \{ 0 \})^k; \; k \in \mathbb{N}^* \; \text{and} \; \sum_{j=1}^k x_{i_j} \leq t \right\}.
\end{align*}
In the particular case where $X_1 \in \mathcal{C}_2$, the value of $k$ in $\Diamond_t$ can not exceed the value $M_{\max,t}$. \\
Finally, for any $k \in \mathbb{N}^*$ such that $(x_{i_1}, \dots ,x_{i_k}) \in \Diamond_t$, we define the event
\begin{align*}
    B_{n,t,(x_{i_1}, \dots ,x_{i_k})} & = \{ \exists j_1<\ldots<j_k \leq n; X_{j_1} = x_{i_1}, \ldots ,X_{j_k} = x_{i_k} \} \cap \{ \forall i \leq n;\forall h \leq k; i \neq j_h; X_i = 0 \}.
\end{align*}

\subsubsection*{Proof of Proposition \ref{theo:: class_c2}}
To prove the result, we will partition the event $\{ S_n \leq x \}$ on the number of variables in the sequence $(X_i, i=1,\ldots,n)$ that value positive atoms. 
Let $n \geq M_x$, we have
\begin{align*}
    \mathbb{P} [  S_{n} \leq x ] & = \mathbb{P} [ S_{n} \leq x , \exists 0 \leq k \leq M_x, \exists \{i_1,\ldots,i_{k} \} \subset \{1,\ldots,n\}, X_{i_j} > 0, \forall 1 \leq j \leq k  ] \\
    & = \sum_{k=0}^{M_x} \mathbb{P} [ S_{n} \leq x , \exists \{i_1,\ldots,i_{k} \} \subset \{1,\ldots,n\}, X_{i_j} > 0, \forall 1 \leq j \leq k  ] \\
    & = \sum_{k=0}^{M_x} \binom{n}{k} \mathbb{P} [X_1 = 0 ]^{n-k} \mathbb{P} [ S_{k} \leq x , X_{j} > 0, \forall 1 \leq j \leq k  ].
\end{align*}
Therefore
\begin{align*}
    \dfrac{\mathbb{P} [  S_{n+1} \leq x ]}{\mathbb{P} [  S_{n} \leq x ]} & = \dfrac{\sum_{k=0}^{M_x} \binom{n+1}{k} \mathbb{P} [X_1 = 0 ]^{n+1-k} \mathbb{P} [ S_{k} \leq x , X_{j} > 0, \forall 1 \leq j \leq k  ]}{\sum_{k=0}^{M_x} \binom{n}{k} \mathbb{P} [X_1 = 0 ]^{n-k} \mathbb{P} [ S_{k} \leq x , X_{j} > 0, \forall 1 \leq j \leq k  ]} \\
    & = \dfrac{\mathbb{P} [X_1 = 0 ]^{n+1-M_x}}{\mathbb{P} [X_1 = 0 ]^{n- M_x}} . \dfrac{\sum_{k=0}^{M_x} \binom{n+1}{k} \mathbb{P} [X_1 = 0 ]^{M_x-k} \mathbb{P} [ S_{k} \leq x , X_{j} > 0, \forall 1 \leq j \leq k  ]}{\sum_{k=0}^{M_x} \binom{n}{k} \mathbb{P} [X_1 = 0 ]^{M_x-k} \mathbb{P} [ S_{k} \leq x , X_{j} > 0, \forall 1 \leq j \leq k  ]} \\
    & = \mathbb{P} [X_1 = 0 ] \dfrac{\sum_{k=0}^{M_x} \binom{n+1}{k} \mathbb{P} [X_1 = 0 ]^{M_x-k} \mathbb{P} [ S_{k} \leq x , X_{j} > 0, \forall 1 \leq j \leq k  ]}{\sum_{k=0}^{M_x} \binom{n}{k} \mathbb{P} [X_1 = 0 ]^{M_x-k} \mathbb{P} [ S_{k} \leq x , X_{j} > 0, \forall 1 \leq j \leq k  ]}.
\end{align*}
Consequently
\begin{align} \nonumber
    &\dfrac{\mathbb{P} [  S_{n+1} \leq x ]}{\mathbb{P} [  S_{n} \leq x ]} - \mathbb{P} [X_1 = 0 ]\\ 
    \nonumber
    & = \mathbb{P} [X_1 = 0 ] \dfrac{\sum_{k=0}^{M_x} \left[ \binom{n+1}{k} - \binom{n}{k} \right] \mathbb{P} [X_1 = 0 ]^{M_x-k} \mathbb{P} [ S_{k} \leq x , X_{j} > 0, \forall 1 \leq j \leq k  ]}{\sum_{k=0}^{M_x} \binom{n}{k} \mathbb{P} [X_1 = 0 ]^{M_x-k} \mathbb{P} [ S_{k} \leq x , X_{j} > 0, \forall 1 \leq j \leq k  ]} \\ \label{eq:: proof_c2}
    & = \mathbb{P} [X_1 = 0 ] \dfrac{\sum_{k=1}^{M_x} \binom{n}{k-1}  \mathbb{P} [X_1 = 0 ]^{M_x-k} \mathbb{P} [ S_{k} \leq x , X_{j} > 0, \forall 1 \leq j \leq k  ]}{\sum_{k=0}^{M_x} \binom{n}{k} \mathbb{P} [X_1 = 0 ]^{M_x-k} \mathbb{P} [ S_{k} \leq x , X_{j} > 0, \forall 1 \leq j \leq k  ]}.
\end{align}
The numerator and the denominator of the fraction in (\ref{eq:: proof_c2}) can be expressed as polynomial function in n with respective degree equal to $M_x-1$ and $M_x$. The dominating terms are respectively the terms where $k=M_x$. We conclude that 
\begin{align*}
    \dfrac{\mathbb{P} [  S_{n+1} \leq x ]}{\mathbb{P} [  S_{n} \leq x ]} - \mathbb{P} [X_1 = 0 ] & \underset{n \to + \infty}{\sim} \mathbb{P} [X_1 = 0 ] \dfrac{ \binom{n}{M_x-1}  \mathbb{P} [ S_{M_x} \leq x , X_{j} > 0, \forall 1 \leq j \leq M_x  ]}{ \binom{n}{M_x}  \mathbb{P} [ S_{M_x} \leq x , X_{j} > 0, \forall 1 \leq j \leq M_x  ]} \\
    & \underset{n \to + \infty}{\sim} \mathbb{P} [X_1 = 0 ] \dfrac{M_x}{n+1 - M_x} \\
    & \underset{n \to + \infty}{\sim} \mathbb{P} [X_1 = 0 ] \dfrac{M_x}{n}.
\end{align*}

\subsubsection*{Proof of Proposition \ref{theo:: class_2}}
The proof of this result uses the same arguments as the proof of part 2 in \cite[Theorem 4.6]{Sah25}.
For any $n \geq M_x$ as defined in Proposition \ref{theo:: class_c2}, we have 
\begin{align*} \nonumber
    \mathbb{P} [ S_{n+1} \leq x] = &  \sum_{x_i \in X_1(\Omega) } \mathbb{P} [ S_{n+1} \leq x; X_{n+1} = x_i] \\ \nonumber
    = & \sum_{x_i \in X_1(\Omega)} \mathbb{P} [ S_{n} \leq x - x_i] \mathbb{P} [X_1 = x_i]  \\
    = &  \sum_{x_i \leq y_1} \mathbb{P} [ S_{n} \leq x - x_i] \mathbb{P} [X_1 = x_i]  + \sum_{ x_i \geq y_2} \mathbb{P} [ S_{n} \leq x - x_i] \mathbb{P} [X_1 = x_i].
\end{align*}
This implies that
\begin{align*}
 \mathbb{P} [ S_{n+1} \leq x] - \sum_{ x_i \leq y_1} \mathbb{P} [ S_{n} \leq x - x_i] \mathbb{P} [X_1 = x_i]&=\sum_{x_i \geq y_2} \mathbb{P} [ S_{n} \leq x - x_i] \mathbb{P} [X_1 = x_i]\\ 
 & \leq  \mathbb{P} [ S_{n} \leq x - y_2 ] \sum_{ x_i \geq y_2} \mathbb{P} [X_1 = x_i] \\
    &  \leq  \mathbb{P} [ S_{n} \leq x - y_2 ].
\end{align*}
We deduce that 
$$    \frac{\mathbb{P} [ S_{n+1} \leq x] - \sum_{x_i < y_1} \mathbb{P} [ S_{n} \leq x - x_i] \mathbb{P} [X_1 = x_i]}{\mathbb{P} [ S_n \leq x - y_1]} - \mathbb{P} [ X_1 = y_1 ] \leq \frac{ \mathbb{P} [ S_{n} \leq x - y_2] }{ \mathbb{P} [ S_n \leq x - y_1]}.$$
We know from Proposition \ref{theo:: class_c2} that 
\begin{align*}
    \dfrac{\mathbb{P} [  S_{n+1} \leq x - y_1 ]}{\mathbb{P} [  S_{n} \leq x - y_1 ]} - \mathbb{P} [X_1 = 0 ] & \underset{n \to + \infty}{\sim} \mathbb{P} [X_1 = 0 ] \dfrac{M_{x-y_1}}{n}.
\end{align*}
Moreover,
\begin{align*}
    \dfrac{\mathbb{P} [  S_{n+1} \leq x - y_1 ]}{\mathbb{P} [  S_{n} \leq x - y_1 ]} - \mathbb{P} [X_1 = 0 ] = \sum_{x_i \in X_1(\Omega) \backslash \{0\}} \frac{\mathbb{P} [  S_{n} \leq x - y_1 - x_i ]}{\mathbb{P} [  S_{n} \leq x - y_1 ]} \mathbb{P} [X_1 = x_i].
\end{align*}
From the initial assumptions on $y_1$ and $y_2$, we have $y_2 - y_1 > x_{\min}$. Due to the definition of $x_{\min}$, there necessarily exists an index $i_0$ such that $x_{i_0} \in X_1(\Omega) \backslash \{0\}$ and $x_{\min} \leq x_{i_0} < y_2 - y_1 $. We consequently deduce that 
\begin{align*}
    \dfrac{\mathbb{P} [  S_{n+1} \leq x - y_1 ]}{\mathbb{P} [  S_{n} \leq x - y_1 ]} - \mathbb{P} [X_1 = 0 ] \geq \frac{\mathbb{P} [  S_{n} \leq x - y_1 - x_{i_0} ]}{\mathbb{P} [  S_{n} \leq x - y_1 ]} \mathbb{P} [X_1 = x_{i_0}] \geq \mathbb{P} [X_1 = x_{i_0}] \frac{ \mathbb{P} [ S_{n} \leq x - y_2] }{ \mathbb{P} [ S_n \leq x - y_1]}.
\end{align*}
We finally obtain that for $n$ sufficiently large,
\begin{align*}
    \frac{ \mathbb{P} [ S_{n} \leq x - y_2] }{ \mathbb{P} [ S_n \leq x - y_1]} \leq  2 \dfrac{\mathbb{P} [X_1 = 0 ]}{\mathbb{P} [X_1 = x_{i_0}]} \dfrac{M_{x-y_1}}{n},
\end{align*}
which ends the proof.

\subsubsection*{Proof of Theorem \ref{theo:: class_3}}

\begin{proof}
Note that no assumption is made regarding whether 0 belongs to the support of the variable $X_1$. We will see during the proof that the case where 0 is an atom requires applying the result obtained when 0 is not an atom. Therefore, we first propose to prove the result in the specific case where 0 is not an atom of the random variable. \\
\underline{Assume that $\mathbb{P} [ X_1 = 0 ] = 0$.}
Since the distribution function $F$ of $X_1$ is right-continuous at zero, with $F(0)=0$, there exists $y > 0$ such that $F(y) < \frac{\varepsilon}{2}$. Define $s = \sup \left\{ y > 0, F(y) < \frac{\varepsilon}{2} \right\}$.
 In order to apply Inequality \eqref{eq:discrete_case_spilting}, we consider $J_1 = \{ x_i \in J, x_i < s  \}$ and $J_2 = J \backslash J_1$. When $J_2$ admits a minimal atom, it will be denoted as $x_{\min}$. In the contrary case, set $x_{\min} = s$. We deduce that
\begin{align*}
    \mathbb{P} [S_{n+1} \leq x] 
    & \leq \mathbb{P} [S_{n} \leq x - x_{\min} ] + \mathbb{P} [S_{n} \leq x]F(s) \\
    & \leq \mathbb{P} [S_{n} \leq x - x_{\min} ] + \mathbb{P} [S_{n} \leq x]\frac{\varepsilon}{2}. 
\end{align*}    
Hence
$$0 \leq \frac{\mathbb{P} [S_{n+1} \leq x] }{\mathbb{P} [S_{n} \leq x]}  \leq \frac{\mathbb{P} [S_{n} \leq x - x_{\min} ]}{\mathbb{P} [S_{n} \leq x]} + \frac{\varepsilon}{2}.$$
We can now prove that $ (\mathbb{P} [S_{n} \leq x - x_{\min} ]/\mathbb{P} [S_{n} \leq x])_{n \in \mathbb{N}^*}$ admits 0 as limit when $n$ goes to infinity. 

\medskip

To do so, we recall that $F(t) = \underset{t \xrightarrow{} 0^+}{ O } (t^{\alpha})$ for some $\alpha > 0$. Thus, define $k_\alpha = \lceil \frac{1}{\alpha} \rceil$. Then for any $n > k_\alpha$, we pose $n = q k_\alpha + r$, the Euclidean division of $n$ by $k_\alpha$. Now denote $S_{k_\alpha}^j = X_{j k_\alpha + 1 } + \ldots + X_{ (j+1) k_\alpha } $ for $j \in \{0, \ldots , q-1 \}$. We can re-write
\begin{align*}
    \mathbb{P} [S_{n} \leq x - x_{\min} ] & = \mathbb{P} \left[ S_{n} \leq x - x_{\min} ; \min \{ S_{k_\alpha}^0 ; S_{k_\alpha}^1 ; \ldots ; S_{k_\alpha}^{q-1} + X_{q k_\alpha + 1} + \ldots + X_n \} \leq \frac{x - x_{\min}}{ q } \right] \\
    & \leq \mathbb{P} \left[ S_{n} \leq x - x_{\min} ; \min \{ S_{k_\alpha}^0 ; S_{k_\alpha}^1 ; \ldots ; S_{k_\alpha}^{q-1} \} \leq \frac{x - x_{\min}}{ q } \right] \\
    & = \mathbb{P} \left[ \{ S_{n} \leq x - x_{\min} \} \cap \left( \bigcup_{i=0}^{q-1} \left\{ S_{k_\alpha}^i \leq \frac{x - x_{\min}}{ q } \right\}  \right) \right]. 
\end{align*}
We then use the i.i.d. property of the sequence $X_n$.
\begin{align*}
    \mathbb{P} [S_{n} \leq x - x_{\min} ] 
    & \leq \sum_{i = 0}^q \mathbb{P} \left[  S_{n} \leq x - x_{\min} ; S_{k_\alpha}^i \leq \frac{x - x_{\min}}{ q } \right] \\
    & \leq \sum_{i = 0}^q \mathbb{P} \left[  S_{n} - S_{k_\alpha}^i \leq x - x_{\min} ; S_{k_\alpha}^i \leq \frac{x - x_{\min}}{ q } \right] \\
    & \leq \sum_{i = 0}^q \mathbb{P} \left[  S_{n} - S_{k_\alpha}^i \leq x - x_{\min} \right] \mathbb{P} \left[ S_{k_\alpha}^i \leq \frac{x - x_{\min}}{ q } \right] \\
    & = q \mathbb{P} \left[  S_{n - k_\alpha} \leq x - x_{\min} \right] \mathbb{P} \left[ S_{k_\alpha} \leq \frac{x - x_{\min}}{ q } \right] \\
    & \leq q \mathbb{P} \left[  S_{n - k_\alpha} \leq x - x_{\min} \right] \mathbb{P} \left[ X_1 \leq \frac{x - x_{\min}}{ q } \right]^{k_\alpha}.
\end{align*}
Since $ (n-1)/k_\alpha \leq q  \leq n/k_\alpha $, we consequently obtain
\begin{align*}
    \mathbb{P} [S_{n} \leq x - x_{\min} ] & \leq \frac{n}{k_\alpha} \mathbb{P} \left[  S_{n - k_\alpha} \leq x - x_{\min} \right] \mathbb{P} \left[ X_1 \leq \frac{k_\alpha (x - x_{\min})}{ n-1 } \right]^{k_\alpha} \\
    & = \frac{n}{k_\alpha} \frac{\mathbb{P} \left[  S_{n - k_\alpha} \leq x - x_{\min} \right] \mathbb{P} [ S_{k_\alpha} \leq x_{\min} ] }{\mathbb{P} [ S_{k_\alpha} \leq x_{\min} ]} \mathbb{P} \left[ X_1 \leq \frac{k_\alpha (x - x_{\min})}{ n-1 } \right]^{k_\alpha} \\
    & \leq \frac{n}{k_\alpha} \frac{\mathbb{P} \left[  S_{n } \leq x \right]}{\mathbb{P} [ S_{k_\alpha} \leq x_{\min} ]} \mathbb{P} \left[ X_1 \leq \frac{k_\alpha (x - x_{\min})}{ n-1 } \right]^{k_\alpha} \\
    & = \frac{\mathbb{P} [S_{n} \leq x]}{ \mathbb{P} [ S_{k_\alpha} \leq x_{\min} ] } \frac{n}{k_\alpha} \frac{k_\alpha (x - x_{\min})}{ n-1 }  \frac{\mathbb{P} \left[ X_1 \leq \frac{k_\alpha (x - x_{\min})}{ n-1 } \right]^{k_\alpha}}{ \frac{k_\alpha (x - x_{\min})}{ n-1 } }.
\end{align*}
The second inequality is justified as follows.
\begin{align*}
    \mathbb{P} \left[  S_{n - k_\alpha} \leq x - x_{\min} \right] \mathbb{P} [ S_{k_\alpha} \leq x_{\min} ] = & \mathbb{P} \left[  S_{n - k_\alpha} \leq x - x_{\min} \right] \mathbb{P} [ S_n - S_{n - k_\alpha} \leq x_{\min} ] \\
    = & \mathbb{P} \left[  S_{n - k_\alpha} \leq x - x_{\min} ; S_n - S_{n - k_\alpha} \leq x_{\min} \right] \\
    \leq & \mathbb{P} \left[  S_{n } \leq x \right]   . 
\end{align*}
Finally, 
\begin{align*}
    \frac{\mathbb{P} [S_{n} \leq x - x_{\min} ]}{\mathbb{P} [S_{n} \leq x]} & \leq \frac{n}{n-1} \frac{x-x_{\min}}{ \mathbb{P} [ S_{k_\alpha} \leq x_{\min} ]} \frac{ F \left( \frac{k_\alpha (x - x_{\min})}{ n-1 } \right)^{k_\alpha}}{\frac{k_\alpha (x - x_{\min})}{ n-1 }},
\end{align*}
where $t \mapsto F(t)^{k_\alpha} = \underset{t \xrightarrow{} 0^+}{ O } (t^{\alpha k_\alpha})$, with $\alpha k_\alpha >1 $, which proves the claimed statement on the limit of $ \mathbb{P} [S_{n} \leq x - x_{\min} ]/\mathbb{P} [S_{n} \leq x] $. Thus for $n$ sufficiently large, $ \mathbb{P} [S_{n} \leq x - x_{\min} ]/\mathbb{P} [S_{n} \leq x] \leq \varepsilon/2$. Finally, for $n$ sufficiently large
\begin{align*}
    \frac{\mathbb{P} [S_{n+1} \leq x] }{\mathbb{P} [S_{n} \leq x]} \leq \varepsilon
\end{align*}
which achieves the proof of the result. \\
\underline{Now assume that $\mathbb{P} [ X_1 = 0 ] > 0$.}
The beginning of the proof in such configuration begins as in the previous case and we keep the same notations.  
Let $\varepsilon >0$. Define $s = \sup \left\{ y > 0, F(y) - F(0) < \frac{\varepsilon}{3} \right\}$, $J_1 = \{ x_i \in J, x_i < s  \}$, $J_2 = J \backslash J_1$ and $x_{\min}$ the minimal atom from the set $J_2$ when it exists and otherwise $x_{\min} = s$. Again with Inequality \eqref{eq:discrete_case_spilting}, we now obtain 
 \begin{align*}
     \mathbb{P} [S_{n+1} \leq x] 
     & \leq  \mathbb{P} [S_{n} \leq x ]  \mathbb{P} [ X_{1} = 0] + \mathbb{P} [S_{n} \leq x - x_{\min} ] + \mathbb{P} [S_{n} \leq x] \frac{\varepsilon}{3}. 
\end{align*}
Hence,
\begin{equation} \label{eq:estim_0_class_C4}
 0 \leq \frac{\mathbb{P} [S_{n+1} \leq x] }{\mathbb{P} [S_{n} \leq x]} - \mathbb{P} [ X_{1} = 0]  \leq \frac{\mathbb{P} [S_{n} \leq x - x_{\min} ]}{\mathbb{P} [S_{n} \leq x]} + \frac{\varepsilon}{3}.
\end{equation}
We can now prove that $ \mathbb{P} [S_{n} \leq x - x_{\min} ]/\mathbb{P} [S_{n} \leq x] $ admits 0 as limit when $n$ goes to infinity. \\
We define $(X_n')_{n \in \mathbb{N}^*}$ an i.i.d sequence of positive random variables such that ${\rm d} \mathbb{P}_{X_1'} = {\rm d} \mathbb{P}_{X_1 | X_1 > 0} $, i.e. for all $A \in \sigma ( X_1 \mathbb{1}_{X_1 > 0} )$, $ \mathbb{P} [ X_1' \in A ] = \mathbb{P} [ X_1 \in A ]/ \mathbb{P} [X_1 > 0 ] $. We denote by $(S_n')_{n \in \mathbb{N}^*}$ the associated partial sums of the sequence.  Then, $(X_n')_{n \in \mathbb{N}^*}$ belongs to Class $\mathcal C_3$ with $\mathbb{P} [X_1' = 0] = 0$. In the first part of the proof, it has been proven that $ \mathbb{P} [S_{n}' \leq x - x_{\min} ]/\mathbb{P} [S_{n}' \leq x] $ converges to zero. Consequently, there exists $N \in \mathbb{N}^*$ such that $\forall n \geq N$, $ \mathbb{P} [S_{n}' \leq x - x_{\min} ]/\mathbb{P} [S_{n}' \leq x] \leq \frac{\varepsilon}{3}$.\\ 
Thus, for $n > N$, by categorizing on the number of terms of $S_n$ equal to zero, we obtain
\begin{align} \nonumber
     & \mathbb{P} [S_{n} \leq x - x_{\min} ] \\ \nonumber
     & \quad = \sum_{k= 0}^n \mathbb{P} [S_{n} \leq x - x_{\min} ; \exists i_1 < \ldots < i_k, X_{i_j} = 0 \; \forall j \in \{ 1 , \ldots , k \}; X_\ell > 0 \; \forall \ell \notin \{ i_1 , \ldots , i_k \}  ] \\ \nonumber
     & \quad =  \sum_{k= 0}^{n-N} \mathbb{P} [S_{n} \leq x - x_{\min} ; \exists i_1 < \ldots < i_k, X_{i_j} = 0 \; \forall 1\leq  j \leq  k ; X_\ell > 0 \; \forall \ell \neq i_j]
     \\ \label{eq:estim_1_C_4}
     & \quad + \sum_{k= n-N}^n \mathbb{P} [S_{n} \leq x - x_{\min} ; \exists i_1 < \ldots < i_k, X_{i_j} = 0 \; \forall 1\leq j \leq k; X_\ell > 0 \; \forall \ell \neq i_j].
\end{align}
The first term can be properly dominated: 
\begin{align} \nonumber
    & \sum_{k= 0}^{n-N} \mathbb{P} [S_{n} \leq x - x_{\min} ; \exists i_1 < \ldots < i_k, X_{i_j} = 0 \; \forall j \in \{ 1 , \ldots , k \}; X_\ell > 0 \; \forall \ell \notin \{ i_1 , \ldots , i_k \}  ] \\ \nonumber
     & \quad = \sum_{k= 0}^{n-N} \sum_{i_1 < \ldots < i_k} \mathbb{P} [S_{n} \leq x - x_{\min}; X_{i_1} = \ldots = X_{i_k} = 0 ; X_\ell > 0 \; \forall \ell \notin \{ i_1 , \ldots , i_k \} ] \\ \nonumber
     & \quad =  \sum_{k= 0}^{n-N} \sum_{i_1 < \ldots < i_k} \mathbb{P} [ X_1 = 0 ]^k \mathbb{P} [S_{n-k} \leq x - x_{\min} ; X_\ell = X_\ell' \; \forall 1 \leq \ell \leq n-k \} ] \\ \nonumber
     & \quad =  \sum_{k= 0}^{n-N} \sum_{i_1 < \ldots < i_k} \mathbb{P} [ X_1 = 0 ]^k \mathbb{P} [ X_1 > 0 ]^{n-k} \mathbb{P} [S_{n-k}' \leq x - x_{\min} ] \\ \nonumber
     & \quad =  \sum_{k= 0}^{n-N} \sum_{i_1 < \ldots < i_k} \mathbb{P} [ X_1 = 0 ]^k \mathbb{P} [ X_1 > 0 ]^{n-k} \mathbb{P} [S_{n-k}' \leq x  ] \frac{\mathbb{P} [S_{n-k}' \leq x - x_{\min} ]}{ \mathbb{P} [S_{n-k}' \leq x  ] } \\ \nonumber
     & \quad \leq  \frac{\varepsilon}{3}  \sum_{k= 0}^{n-N} \sum_{i_1 < \ldots < i_k} \mathbb{P} [ X_1 = 0 ]^k \mathbb{P} [ X_1 > 0 ]^{n-k} \mathbb{P} [S_{n-k}' \leq x  ] \\ \label{eq:estim_2_class_C_4}
     & \quad\leq  \frac{\varepsilon}{3}  \sum_{k= 0}^{n} \sum_{i_1 < \ldots < i_k} \mathbb{P} [ X_1 = 0 ]^k \mathbb{P} [ X_1 > 0 ]^{n-k} \mathbb{P} [S_{n-k}' \leq x  ] .
\end{align}
But we also have that
\begin{align*}
    \mathbb{P} [S_{n} \leq x ] = & \sum_{k= 0}^n \mathbb{P} [S_{n} \leq x  ; \exists i_1 < \ldots < i_k, X_{i_j} = 0 \; \forall j \in \{ 1 , \ldots , k \}; X_\ell > 0 \; \forall \ell \notin \{ i_1 , \ldots , i_k \}  ] \\ 
    = & \sum_{k= 0}^{n} \sum_{i_1 < \ldots < i_k} \mathbb{P} [S_{n} \leq x ; X_{i_1} = \ldots = X_{i_k} = 0 ; X_\ell > 0 \; \forall \ell \notin \{ i_1 , \ldots , i_k \} ] \\
    = & \sum_{k= 0}^{n} \sum_{i_1 < \ldots < i_k} \mathbb{P} [ X_1 = 0 ]^k \mathbb{P} [S_{n-k} \leq x ; X_\ell = X_\ell' \; \forall 1 \leq \ell\leq n-k \} ] \\
    = & \sum_{k= 0}^{n} \sum_{i_1 < \ldots < i_k} \mathbb{P} [ X_1 = 0 ]^k \mathbb{P} [ X_1 > 0 ]^{n-k} \mathbb{P} [S_{n-k}' \leq x ]. 
\end{align*}
And combining this equality with the estimate \eqref{eq:estim_2_class_C_4} justifies that 
\begin{align} \nonumber
    & \sum_{k= 0}^{n-N} \mathbb{P} [S_{n} \leq x - x_{\min} ; \exists i_1 < \ldots < i_k, X_{i_j} = 0 \; \forall j \in \{ 1 , \ldots , k \}; X_\ell > 0 \; \forall \ell \notin \{ i_1 , \ldots , i_k \}  ] \\ \label{eq:estim_5_class_C_4}
    & \quad \leq  \frac{\varepsilon}{3} \mathbb{P} [S_{n} \leq x ].
\end{align}
Recall that from the notations introduced at the beginning of the section, for any positive $t$, the family of set $\{  B_{n, t, (x_{i_1}, \dots ,x_{i_k})} ,  (x_{i_1}, \dots ,x_{i_k}) \in \Diamond_t, k \in \{ 0 , \ldots , n \}  \}$ constitute a partition of the event $\{ S_n \leq t \}$. \\
Then, the second term can be re-written as
\begin{align*}
    & \sum_{k= n-N}^n \mathbb{P} \Bigg[ S_{n} \leq x - x_{\min} \; ; \; \exists\, i_1 < \ldots < i_k,\; X_{i_j} = 0 \; \forall j \in \{1 , \ldots , k\},\; X_\ell > 0 \; \forall \ell \notin \{i_1, \ldots, i_k\} \Bigg] \\
  & \quad =  \sum_{k=0}^{N} \sum_{(x_{i_1}, \ldots, x_{i_k}) \in \Diamond_{x - x_{\min}}} 
    \mathbb{P} \Big[ B_{n, x - x_{\min}, (x_{i_1}, \dots ,x_{i_k})} \Big].
\end{align*}
Since $N$ is a fixed integer such that $N <n$, using the same proof as in \cite[Theorem 4.6]{Sah25} (we omit the details of the proof here), we obtain the following estimate: for any fixed atom of $X_1$ $y \in ]0,x_{\min} [$, $k \leq N$, $ (x_{i_1}, \ldots, x_{i_k}) \in \Diamond_{x - x_{\min}}$,
\begin{align*}
    \frac{\mathbb{P} \left[ B_{n,x - x_{\min},(x_{i_1}, \ldots ,x_{i_k})} \right]}{\mathbb{P} \left[ B_{n,x-x_{\min} + y,(x_{i_1}, \dots ,x_{i_k}, y)} \right]} 
    & = \frac{k}{n-k} \cdot \frac{\mathbb{P} [X_1 = 0]}{\mathbb{P} [X_1 = y]} 
    \leq \frac{ N }{n- N} \cdot \frac{\mathbb{P} [X_1 = 0]}{\mathbb{P} [X_1 = y]}.
\end{align*}
We finally obtain
\begin{align*}
    & \sum_{k= n-N}^n \mathbb{P} \Bigg[ S_{n} \leq x - x_{\min} \; ; \; \exists\, i_1 < \ldots < i_k,\; X_{i_j} = 0 \; \forall j \in \{1 , \ldots , k\},\; X_\ell > 0 \; \forall \ell \notin \{i_1, \ldots, i_k\} \Bigg] \\
    & \leq \frac{ N }{n- N} \cdot \frac{\mathbb{P} [X_1 = 0]}{\mathbb{P} [X_1 = y]} 
    \sum_{k=0}^{N} \sum_{(x_{i_1}, \ldots, x_{i_k}) \in \Diamond_{x - x_{\min}}} 
    \mathbb{P} \left[ B_{n,x-x_{\min} + y,(x_{i_1}, \dots ,x_{i_k}, y)} \right] \\
    & \leq \frac{ N }{n- N} \cdot \frac{\mathbb{P} [X_1 = 0]}{\mathbb{P} [X_1 = y]} \cdot \mathbb{P} [ S_n \leq x],
\end{align*}
where the last inequality holds since all the terms in the obtained sum are disjoint sub-events of the event $\{ S_n \leq x \}$. Finally, for $n$ sufficiently large, 
$$\frac{ N }{n- N} \cdot \frac{\mathbb{P} [X_1 = 0]}{\mathbb{P} [X_1 = y]} \cdot \mathbb{P} [ S_n \leq x] \leq \frac{\varepsilon}{3} \cdot \mathbb{P} [ S_n \leq x].$$
Coming back to \eqref{eq:estim_0_class_C4}, with \eqref{eq:estim_1_C_4} and \eqref{eq:estim_5_class_C_4}, we obtain
\begin{align*}
0 \leq \frac{\mathbb{P} [S_{n+1} \leq x] }{\mathbb{P} [S_{n} \leq x]} - \mathbb{P} [ X_{1} = 0] 
    & \leq   \frac{\varepsilon}{3} + \frac{\varepsilon}{3} + \frac{\varepsilon}{3} = \varepsilon.
\end{align*}
This achieves the proof of the result.
\end{proof}

\subsection{Proofs for absolutely continuous variables}  \label{ssect:proof_class_C_5}

In this section, we prove Theorems \ref{theo:: class c_5}, \ref{theo::lower_bound_continuous} and \ref{theo:: equivalence_continuous_case} and Propositions \ref{propo:self_convolution} and \ref{prop:class_C_6}.  

In \cite[Theorem 4.6]{Sah25} a more restrictive theorem establishes a similar result of convergence when $X_1$ is absolutely continuous with respect to the Lebesgue measure, with density function $f$ such that for some $N$, $f^{*N}$ ($N$ times self-convolution of $f$ or density of $S_N$) is a non-decreasing function on $[0,x]$. Along this section, we prove that sequences in $\mathcal{C}_4$ verify such a property (Proposition \ref{propo:self_convolution}).

Regular variation has been evoked in Definition \ref{defi:regular_variation}. Note that if $L$ is slowly varying, then for any $\beta>0$,
\begin{equation}\label{rem:limit_beha_slowly_var}
     \lim_{x \xrightarrow{} 0^+} x^{\beta} L(x) = 0 \quad \quad \text{and} \quad \quad \lim_{x \xrightarrow{} 0^+} x^{- \beta} L(x) = + \infty .
\end{equation}

\begin{propo} \label{prop:slowly_varying}
Let $f$ be a regularly varying at zero, with index of regular variation $\beta \in \mathbb{R}$. Assume that $f$ is of class $C^1$ in a neighborhood of zero, such that $f$ is non-null and $f'$ is monotone in such neighborhood. We have
\begin{eqnarray*}
\lim_{x \xrightarrow{} 0+} \frac{xf'(x)}{f(x)} = \beta .
\end{eqnarray*}
\end{propo}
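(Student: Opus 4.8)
The plan is to deduce the statement from the classical representation theorem for regularly varying functions combined with the monotonicity hypothesis on $f'$. First I would write $f(x) = x^\beta L(x)$ on some interval $]0,a]$, where $L$ is slowly varying at zero, and observe that it suffices to show $\lim_{x\to 0^+} x L'(x)/L(x) = 0$, since $x f'(x)/f(x) = \beta + x L'(x)/L(x)$ by the product rule (valid because $L = x^{-\beta} f$ is $C^1$ near zero). The quantity to control is therefore the ``logarithmic derivative'' $x L'(x)/L(x)$.

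The key step is to exploit that $f'$ is monotone near zero, which forces $L'$ to be eventually of one sign and, more importantly, lets us compare increments of $L$ with pointwise values of $L'$. Concretely, for a fixed $\lambda \in (0,1)$ and small $x$, the mean value theorem gives $L(x) - L(\lambda x) = (1-\lambda) x\, L'(\xi_x)$ for some $\xi_x \in (\lambda x, x)$; dividing by $L(x)$ and using the slow variation $L(\lambda x)/L(x) \to 1$ shows $(1-\lambda)x L'(\xi_x)/L(x) \to 0$. Monotonicity of $L'$ (inherited from $f'$, after checking that multiplying by the smooth, ultimately monotone factor $x^{-\beta}$ preserves eventual monotonicity of the derivative — this is the one bookkeeping point to handle carefully) then sandwiches $x L'(x)$ between $\xi_x L'(\xi_x)$-type quantities evaluated at comparable points, and since $L(\xi_x)/L(x)\to 1$ as well, one upgrades the conclusion to $x L'(x)/L(x) \to 0$. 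Letting $\lambda \uparrow 1$ removes the spurious constant, and we are done. An alternative, perhaps cleaner, route is to invoke directly the known result that a $C^1$ function which is regularly varying and has ultimately monotone derivative satisfies $xf'(x)/f(x) \to \beta$ (the ``von Mises'' or monotone-density type statement, cf.\ the Karamata theory in \cite{bing:gold:teug:89}); but since the paper seems to want a self-contained argument, I would present the mean value theorem computation above.

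I expect the main obstacle to be the careful handling of the sign and monotonicity transfer: $f'$ is assumed monotone near zero, but $L' = (x^{-\beta} f)' = x^{-\beta} f' - \beta x^{-\beta-1} f$, which is a difference of two terms, so it is not immediate that $L'$ is monotone. The fix is that we do not actually need $L'$ monotone; we only need enough regularity to run the squeezing argument, and that can be arranged by instead working with $f$ and $f'$ directly: write $f(x) - f(\lambda x) = (1-\lambda) x f'(\eta_x)$, divide by $f(x)$, use that $f$ is regularly varying so $f(\lambda x)/f(x) \to \lambda^\beta$, deduce $x f'(\eta_x)/f(x) \to (1-\lambda^\beta)/(1-\lambda)$, and then use monotonicity of $f'$ together with $f(\eta_x)/f(x) \to$ (a constant in $[\lambda^\beta,1]$) to pin down $\lim x f'(x)/f(x)$; finally let $\lambda \to 1$ so that $(1-\lambda^\beta)/(1-\lambda) \to \beta$. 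This keeps all monotonicity hypotheses on the object they were assumed for, and the only real work is verifying the limits are genuine (not merely $\limsup$/$\liminf$) by playing the upper and lower Riemann-type bounds coming from monotonicity of $f'$ against each other.
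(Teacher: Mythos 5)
Your second route (the one you actually propose to write up) is correct, and it differs from the paper's proof in structure. The paper splits into two cases: for $\beta\neq 0$ it simply invokes the monotone density theorem of \cite[Proposition 1.5.8]{bing:gold:teug:89} applied to the pair $(f,f')$, and only for $\beta=0$ does it argue by hand, noting that monotonicity of $f'$ makes $f$ convex or concave near $0$ and then sandwiching $xf'(x)/f(x)$ between the fixed-ratio difference quotients $4\bigl(1-f(\tfrac34 x)/f(x)\bigr)$ and $4\bigl(f(\tfrac54 x)/f(x)-1\bigr)$, both of which tend to $0$ by slow variation. Your argument is exactly this mechanism but run uniformly for all $\beta$: the mean value theorem on $f(x)-f(\lambda x)$ gives $xf'(\eta_x)/f(x)\to(1-\lambda^\beta)/(1-\lambda)$, monotonicity of $f'$ brackets $f'(x)$ between $f'(\eta_x)$-type values on the two sides (for the second comparison you need either the change of variables $y=\lambda x$ with $f(y)/f(y/\lambda)\to\lambda^\beta$, or equivalently a second scaling $\mu>1$, and both monotonicity cases of $f'$ must be run), and letting $\lambda\to 1$ identifies $\limsup$ and $\liminf$ with $\beta$. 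What your version buys is a single self-contained elementary proof with no citation and no case distinction on $\beta$; what the paper's version buys is brevity, since the generic case is outsourced to Karamata theory and the $\beta=0$ case needs no $\lambda\to1$ limit because the target is $0$. One small inaccuracy in your sketch: the claim that $f(\eta_x)/f(x)$ converges to a constant in $[\lambda^\beta,1]$ is not justified (the point $\eta_x$ is uncontrolled, so the ratio need only be asymptotically trapped between $\min(\lambda^\beta,1)$ and $\max(\lambda^\beta,1)$ by uniform convergence), but this quantity is not actually needed: the squeeze uses only $f'(\lambda x)\lessgtr f'(\eta_x)\lessgtr f'(x)$. Your abandoned first route through $L$ was rightly discarded for the reason you give, and your "alternative cleaner route" of citing the von Mises/monotone-density statement is essentially what the paper does for $\beta\neq 0$.
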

\begin{proof}
When $\beta \neq 0$, $f$ is not a slowly varying function and the monotone density theorem justifies the theorem (see. \cite[Proposition 1.5.8]{bing:gold:teug:89}). \\
When $\beta =0$, $f$ is a slowly varying function.
Due to the assertions made on $f$, there exists $\varepsilon > 0$ such that $f$ is non-null, of class $C^1$ and $f'$ is monotone on $]0,\varepsilon]$. Let $x \in ]0,\frac{3}{4} \varepsilon[$. 
Since $f'$ is monotone on $]0,\varepsilon[$,  then $f$ is either concave or convex. The concavity/convexity inequalities ensure that
\begin{align*}
    \frac{f(x) - f(x - x/4)}{x/4} & \leq f'(x) \leq \frac{f(x + x/4) - f(x)}{x/4} \quad \quad \quad \text{if $f$ is convex}, \\
    \frac{f(x + x/4) - f(x)}{x/4} & \leq f'(x) \leq \frac{f(x) - f(x-x/4)}{x/4} \quad \quad \quad \text{if $f$ is concave}.
\end{align*}
Consequently,
\begin{align*}
    4\frac{f(x)}{f(x)} - 4\frac{f((3/4)x)}{f(x)} & \leq \frac{xf'(x)}{f(x)} \leq 4\frac{f((5/4)x)}{f(x)} - 4\frac{f(x)}{f(x)} \quad \quad \quad \text{if $f$ is convex}, \\
     4\frac{f((5/4)x)}{f(x)} - 4\frac{f(x)}{f(x)}& \leq \frac{xf'(x)}{f(x)} \leq 4\frac{f(x)}{f(x)} - 4\frac{f((3/4)x)}{f(x)} \quad \quad \quad \text{if $f$ is concave}.
\end{align*}
Since $f$ is slowly varying, the left and right bounds converge to $0= \beta $.
\end{proof}

\begin{coro} \label{coro:: regular_var}
Let $f$ be a regularly varying function at zero, with index of regular variation $\beta \in \mathbb{R}$. Assume that $f$ is of class $C^1$ in a neighborhood of zero, such that $f$ is non-null and $f'$ is monotone in such neighborhood. Then, for any $\theta  \in \mathbb{R}$, $x \mapsto x^{\theta} f(x)$ is non-decreasing in a neighborhood of 0 when $\theta + \beta > 0$ and non-increasing in a neighborhood of 0 when $\theta + \beta < 0$.

\end{coro}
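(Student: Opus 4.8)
The plan is to differentiate the product $g(x):=x^{\theta}f(x)$ and read off the sign of $g'$ in a punctured neighborhood of the origin directly from Proposition \ref{prop:slowly_varying}. First I would fix $\varepsilon>0$ such that $f$ is of class $C^1$, non-null and $f'$ is monotone on $]0,\varepsilon]$; since $f$ is regularly varying it is in particular positive there, so $f(x)>0$ on $]0,\varepsilon]$. On this interval $g$ is $C^1$, being the product of the $C^1$ function $f$ with $x\mapsto x^{\theta}$, which is smooth on $]0,+\infty[$, and
\[
g'(x) = \theta x^{\theta-1}f(x) + x^{\theta}f'(x) = x^{\theta-1}f(x)\left(\theta + \frac{x f'(x)}{f(x)}\right).
\]

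Next I would invoke Proposition \ref{prop:slowly_varying}, whose hypotheses are exactly those assumed here, to obtain $\lim_{x\to 0^+} x f'(x)/f(x) = \beta$, hence $\lim_{x\to 0^+}\bigl(\theta + x f'(x)/f(x)\bigr) = \theta+\beta$. If $\theta+\beta>0$, there is $\delta\in]0,\varepsilon]$ such that $\theta + xf'(x)/f(x)>0$ for all $x\in]0,\delta]$; since $x^{\theta-1}>0$ and $f(x)>0$ on that interval, we get $g'(x)>0$ there, so $g$ is strictly increasing, in particular non-decreasing, on $]0,\delta]$. The case $\theta+\beta<0$ is symmetric: the bracket is eventually negative, so $g'<0$ near zero and $g$ is non-increasing there.

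There is essentially no obstacle beyond bookkeeping here: the only points requiring care are that the hypotheses of Proposition \ref{prop:slowly_varying} are genuinely met (they are, being verbatim the same), that $f$ remains positive and $C^1$ on the working interval so that the factorization of $g'$ is legitimate, and that the sign analysis is carried out strictly on $]0,\delta]$ rather than at $0$ itself, where $x^{\theta}f(x)$ or its derivative may blow up or vanish. Since the statement only claims monotonicity \emph{in a neighborhood of} $0$, concluding on the punctured interval $]0,\delta]$ is all that is needed.
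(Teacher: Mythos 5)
Your proposal is correct and is exactly the paper's argument: the paper's proof is the one-line remark that the result is immediate by differentiating $x^{\theta}f(x)$ and applying Proposition \ref{prop:slowly_varying}, which is precisely the factorization and sign analysis you carry out in detail. No gaps; your care about positivity of $f$ and working on a punctured interval is the right bookkeeping.
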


\begin{proof}
Immediate by differentiating the functions and applying Proposition \ref{prop:slowly_varying}.
\end{proof}

\subsubsection{Self-convolution for functions in $\mathcal C_4$}

The first idea to prove Proposition \ref{propo:self_convolution} is to justify that any regularly varying function in a neighborhood of zero with locally monotone derivative becomes a non-decreasing function in this neighborhood when sufficiently self-convoluted. The proof is based on Lemma \ref{lem:: regular_to_increas}.

However the function is only non-decreasing on a subset $[0,\varepsilon]$ that could be smaller than $[0,x]$. Actually, Lemma \ref{lemm: increas_propagation} will allow to extend the domain of monotony of the sequence of functions $(f_{S_n})_{n \in \mathbb{N}^*}$ as $n$ grows. The proof of this lemma is based on two other technical Lemmata \ref{lem:: derivation_convo} and \ref{lem:: convo_of_increas_func}.

\medskip

\begin{lemma} \label{lem:: regular_to_increas}
Let $(X_n)_{n \in \mathbb{N}^*}$ be a sequence of i.i.d. positive random variables such that $X_1$ is absolutely continuous with respect to the Lebesgue measure. Denote $f$ the density function of $X_1$, $F$ its cumulative distribution function and $f_{S_n}$ the density function of $S_n = \sum_{k=1}^{n} X_k$. Define the assertions
\begin{enumerate}
    \item[(i)] There exist an integer $N_1 > 0$ and $\varepsilon>0$ such that $f_{S_{N_1}}$ is non-decreasing on $]0,\varepsilon]$.
    \item[(ii)] There exist an integer $N_2 > 0$,  $\varepsilon>0$ and $\beta > -1$, such that: $\forall c \in ]0;1[$, $\forall t \in ]0;\varepsilon]$, we have $f_{S_{N_2}}(ct) \leq c^{\beta} f_{S_{N_2}}(t)$. 
\end{enumerate}
Assertions (i) and (ii) are equivalent.
\end{lemma}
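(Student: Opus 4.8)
The plan is to show both implications. The direction (ii) $\Rightarrow$ (i) is the genuinely useful one for Proposition 2.5, and it is the one requiring a convolution argument; the direction (i) $\Rightarrow$ (ii) should follow more cheaply from the monotone-density type analysis already available in the excerpt.

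\medskip

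\textbf{Direction (ii) $\Rightarrow$ (i).} Assume the multiplicative domination $f_{S_{N_2}}(ct)\le c^\beta f_{S_{N_2}}(t)$ for all $c\in\,]0,1[$ and $t\in\,]0,\varepsilon]$, with $\beta>-1$. The idea is that a single self-convolution of a function controlled this way produces a non-decreasing function near zero, so one can take $N_1=2N_2$. Write $g=f_{S_{N_2}}$, so $f_{S_{2N_2}}=g*g$, and for $0<x\le\varepsilon$ symmetrize the convolution integral as
\begin{align*}
(g*g)(x)=\int_0^x g(u)g(x-u)\,du = 2\int_0^{x/2} g(u)g(x-u)\,du.
\end{align*}
Differentiating in $x$ (this is where Lemma 1.4, \ref{lem:: derivation_convo}, on differentiating convolutions is invoked to justify the manipulation) one obtains a boundary term plus $\int_0^{x/2} g(u)\,g'(x-u)\,du$, and the strategy is to bound $(g*g)'(x)$ from below by $0$. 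The key is to substitute $u=xs$, $s\in\,]0,1/2[$, and use (ii) in the form $g(xs)\le s^\beta g(x)$ and $g(x-xs)=g(x(1-s))\le (1-s)^\beta g(x)$ when $s$ is in the right range, converting the integral into $g(x)^2$ times an integral of $s^\beta(1-s)^\beta$-type factors that is finite precisely because $\beta>-1$; comparing $(g*g)(x)$ at nearby points $x<x'$ then reduces to a scaling inequality that holds for small $x$. I expect the cleanest route is actually to prove directly that $x\mapsto (g*g)(x)$ is non-decreasing by writing, for $0<x<x'\le\varepsilon$, the difference $(g*g)(x')-(g*g)(x)$ after the substitutions $u=x's$ and $u=xs$ respectively, and showing the integrand comparison $x' g(x's)g(x'(1-s)) \ge x\, g(xs)g(x(1-s))$ holds pointwise in $s$ for small enough $x'$ — which, after applying (ii) to the ratios $g(x's)/g(xs)$ and the analogous one, amounts to $x'(x/x')^{2\beta}\cdot(\text{slowly varying ratio})\ge x$, true near $0$ when $2\beta+1>0$; if $\beta\le -1/2$ one instead keeps one factor unscaled and uses monotonicity of $g(x(1-s))$ only on the part where it is available. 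The main obstacle is handling the regime $-1<\beta\le 0$ where $g$ itself may blow up at $0$: there the pointwise integrand comparison can fail near $s=0$ and $s=1$, and one must split the $s$-integral into a central part (where crude bounds suffice) and boundary layers of width $\delta$ (where the $s^\beta$, $(1-s)^\beta$ singularities are integrable), choosing $\delta$ and then $x$ small. This $\varepsilon$–$\delta$ bookkeeping, together with invoking Lemma 1.4 to legitimize differentiation under the integral sign, is the technical heart of the argument.

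\medskip

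\textbf{Direction (i) $\Rightarrow$ (ii).} Assume $f_{S_{N_1}}$ is non-decreasing on $]0,\varepsilon]$. Then trivially $f_{S_{N_1}}(ct)\le f_{S_{N_1}}(t)$ for $c\in\,]0,1[$, i.e. (ii) holds with $N_2=N_1$ and $\beta=0>-1$. This is immediate, so this implication is essentially free; the only point to record is that $\beta=0$ is an admissible value in (ii). (One could alternatively note that in the setting of class $\mathcal C_4$ the sharper Corollary 2.2 / Proposition 2.1 already pin down the regular-variation index, but for the bare equivalence of (i) and (ii) nothing beyond monotonicity is needed.)

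\medskip

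In summary, the substance of the lemma is the convolution-smoothing implication (ii) $\Rightarrow$ (i): a function obeying a one-sided power-law domination at $0$ becomes genuinely monotone after one self-convolution, with the exponent condition $\beta>-1$ being exactly what guarantees the relevant beta-type integral converges. I expect the symmetrized-convolution estimate with the $s=u/x$ rescaling, and the boundary-layer splitting needed when $\beta\in\,]-1,0]$, to be where the real work lies; everything else is bookkeeping and an appeal to Lemma \ref{lem:: derivation_convo}.
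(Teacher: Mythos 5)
Your direction (i) $\Rightarrow$ (ii) is fine and is exactly the paper's argument ($\beta=0$). The problem is in (ii) $\Rightarrow$ (i): the core claim that a \emph{single} self-convolution suffices, i.e.\ that one can take $N_1=2N_2$, is false when $\beta\in\left]-1,-\tfrac12\right[$. Indeed (ii) is satisfied with equality by $g(t)=t^{\beta}$ near the origin, and then
\begin{align*}
(g*g)(x)=\int_0^x u^{\beta}(x-u)^{\beta}\,du = B(\beta+1,\beta+1)\,x^{2\beta+1},
\end{align*}
which is strictly \emph{decreasing} near $0$ whenever $2\beta+1<0$. Your own pointwise comparison confirms this: the substitution $u=xs$ together with (ii) yields $x\,g(xs)g(x(1-s))\le x\,(x/x')^{2\beta}g(x's)g(x'(1-s))$, and the required factor $x(x/x')^{2\beta}\le x'$ holds precisely when $2\beta+1\ge 0$. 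No boundary-layer splitting or ``keeping one factor unscaled'' can repair this, because the statement ``$g*g$ is non-decreasing near $0$'' is simply not a consequence of (ii) when $\beta<-\tfrac12$; the exponent condition $\beta>-1$ only guarantees integrability, not monotonicity after one convolution.

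The paper's route avoids this by \emph{iterating} the scaling inequality rather than aiming at monotonicity in one step: writing $f_{S_{2^{k+1}N_2}}(ct)=c\int_0^t f_{S_{2^kN_2}}(c(t-z))f_{S_{2^kN_2}}(cz)\,dz$ and applying the hypothesis to both factors shows by induction that $f_{S_{2^kN_2}}(ct)\le c^{2^k(1+\beta)-1}f_{S_{2^kN_2}}(t)$ on $]0,\varepsilon]$, so each doubling maps the exponent $\beta\mapsto 2\beta+1$; since $1+\beta>0$ one chooses $k_0$ with $2^{k_0}(1+\beta)>1$, and then $f_{S_{2^{k_0}N_2}}$ is non-decreasing on $]0,\varepsilon]$. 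Note also that this argument is a pure change of variables inside the convolution integral: no differentiation is needed, so the appeal to the differentiation-of-convolutions lemma in your plan is unnecessary. To salvage your write-up you should replace the ``one convolution suffices'' step by this induction (or any equivalent bookkeeping showing the exponent improves geometrically with repeated self-convolution), keeping your $\beta\ge -\tfrac12$ computation as the base observation.
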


\begin{proof}
$\ $
\noindent \underline{(i) $\Rightarrow$ (ii):} 
If there exist an integer $N_1$ and $\varepsilon>0$ such that $f_{S_{N_1}}$ is non-decreasing on $]0;\varepsilon]$, we have 
\begin{eqnarray*}
    \forall c \in ]0;1[, \forall t \in [0;x], \quad f_{S_{N_1}} (ct) \leq f_{S_{N_1}} (t) = c^0 f_{S_{N_1}} (t),
\end{eqnarray*}
and (ii) is satisfied with $\beta = 0$.

\smallskip

\noindent \underline{(ii) $\Rightarrow$ (i):} 
Assume that there exist an integer $N_2 > 0$, $\varepsilon > 0$ and $ \beta > -1$ such that: $\forall c \in ]0;1[$, $\forall t \in ]0;\varepsilon]$, we have $f_{S_{N_2}}(ct) \leq c^{\beta} f_{S_{N_2}}(t)$. \\
One can prove prove by induction that $\forall k \in \mathbb{N}$, $\forall c \in ]0;1[$ and $\forall t \in ]0;\varepsilon]$,
\begin{eqnarray*}
    f_{S_{2^k N_2}}(ct) \leq c^{2^k(1+ \beta) -1} f_{S_{2^k N_2}}(t).
\end{eqnarray*}
The case where $k=0$ is true due to the assumption that we made. Now assume the hypothesis true at rank $k \geq 0$. Note that
$$S_{2^{k+1}N_2} = X_1 + \ldots + X_{2^k N_2} + X_{2^k N_2 +1} + \ldots + X_{2^{k+1}N_2},$$ 
where the sum of the $2^k N_2$ first terms and the sum of the $2^k N_2$ last terms are independent. We have
\begin{eqnarray*}
    f_{S_{2^{k+1}N_2}}(t) = \int_0^t f_{S_{2^k N_2}} ( t-y) f_{S_{2^k N_2}} (y) dy, \quad \quad \forall t \in \mathbb{R}_+^* .
\end{eqnarray*}
The upper bound of the interval of integration is $t$ since $(t-y) < 0$ if $y>t$ and $f_{S_{2^k N_2}}$ is null on the half-line $]- \infty ,0[$. 
For any $c \in ]0;1[$ and $t \in ]0;\varepsilon[$, we have
\begin{align*}
    f_{S_{2^{k+1}N_2}}(ct) & = \int_0^{ct} f_{S_{2^k N_2}} ( ct-y) f_{S_{2^k N_2}} (y) dy \\
& = \int_0^{t} f_{S_{2^k N_2}} ( ct-cz) f_{S_{2^k N_2}} (cz) c dz = c \int_0^{t} f_{S_{2^k N_2}} ( c(t-z)) f_{S_{2^k N_2}} (cz) dz \\
    & \leq c \int_0^{t} c^{2^k(1+ \beta) -1} f_{S_{2^k N_2}} ( t-z) c^{2^k(1+ \beta) -1} f_{S_{2^k N_2}} (z) dz = c^{2^{k+1}(1+ \beta) -1} f_{S_{2^{k+1} N_2}}(t),
\end{align*}
where the domination is provided by the induction assumption. Hence the claim is verified. \\
Now since $1 + \beta > 0$, there exists an integer $k_0$ such that $2^{k_0}(1+ \beta) -1 > 0$ and for all $c \in ]0;1[$, $c^{2^{k_0}(1+ \beta) -1} \leq 1$. Therefore for all $t \in ]0;\varepsilon]$ and $c \in ]0;1[$, $f_{S_{2^{k_0}N_2}}(ct) \leq f_{S_{2^{k_0}N_2}}(t)$, which proves that $f_{S_{2^{k_0}N_2}}$ is non-decreasing on $]0;\varepsilon]$.
\end{proof}

\begin{remark}[On Lemma \ref{lem:: regular_to_increas}]$\ $
\begin{enumerate}
    \item This lemma offers a new criterion which is less restrictive on the density functions of the sequence $(S_n)_{n \in \mathbb{N}^*}$ and allows us to free ourselves from the hypothesis of growth on $[0,x]$ of the density $f_{S_N}$ for a certain rank $N \in \mathbb{N}^*$. \\
    For example, we can considerate the sequence of random variables $(X_n)_{n \in \mathbb{N}^*}$ where the law of $X_1$ is the Gamma law $\Gamma(k, \theta)$ where $(k,\theta) \in ]0,1[\times \mathbb{R}^{*}_{+}$, which admits a density $f$ given by: $f(x) = \frac{x^{k-1} e^{-\frac{x}{\theta}}}{\Gamma(k+1) \theta^k}$. It is easy to see that (ii) from lemma \ref{lem:: regular_to_increas} is satisfied in a neighborhood of 0. It justifies the existence of $N>0$ such that $f_{S_N}$ is non-decreasing on this neighborhood.
    \item In the previous lemma, we considered non-decreasing functions on some subset $]0, \varepsilon]$. If the density function $f$ of a random variables from Class $\mathcal{C}_4$ verifies assumption $(ii)$ and consequently $(i)$, for $n$ sufficiently large, $f^{*n}$ will be non-decreasing on some subset $]0,\varepsilon]$. Such density function is continuous on $\mathbb{R}^*$. Since the convolution preserves the domain of continuity of the convoluted functions, $f^{*n}$ can be extended as a continuous non-decreasing function on $[0,\varepsilon]$.    
    \end{enumerate}
\end{remark}

Let us recall standard results about the differentiation of the convolution of functions. If $f$ and $g$ are functions of class $C_0^{\infty}( \mathbb{R} )$ (infinitely differentiable functions supported on compact sets) or if they are of class $C^1 ( \mathbb{R} )$, integrable, with integrable derivatives, then $f*g$ is well defined, of class $C^1$ and we have
$$        (f*g)' = f'*g = f*g'. $$
Nevertheless, some density functions from Class $\mathcal{C}_4$ have non-integrable derivatives. As a counter-example, let $X$ be a Gamma random variable, with density function:
\begin{eqnarray*}
    f_X ( t ) = \frac{1}{\Gamma(1/2)}  t^{-1/2} e^{-t} \mathbb{1}_{ \{ t > 0\} }.
\end{eqnarray*}
$f_X$ is of class $C^1(]0,+\infty[)$ and is integrable. But we have
\begin{eqnarray*}
    f_X' ( t ) = - \frac{1}{\Gamma(1/2)}  (t^{-1/2} + \frac{1}{2} t^{-3/2}) e^{-t} \mathbb{1}_{ \{ t > 0\} }
\end{eqnarray*}
and $f_X'$ is not integrable at 0, inducing that $f_X * f_X' $ is not defined on $\mathbb{R}_+^*$. 

It is nonetheless possible to differentiate the convolution of two functions of class $C^1(]0,+\infty[)$ integrable on $]0,+\infty[$, with non-integrable derivatives in a neighborhood of 0.
The next lemma 
proves this result and gives an expression of the derivative of convolutions for such functions.

\begin{lemma} \label{lem:: derivation_convo}
Let $g$ be a non-negative function of class $C^1(]0,+\infty[)$, null on $\mathbb{R}_-^*$, integrable on $]0,+\infty[$. Then for all $n \in \mathbb{N}^*$, $g^{*n}$ is of class $\mathcal{C}^1(]0,+\infty[)$, null on $\mathbb{R}_-^*$. Moreover for all $n \geq 2$, $j \in \{ 1, \ldots , n-1 \}$ and $x > 0$,
\begin{align}  \nonumber
    (g^{*n})' (x) & = \int_0^{\frac{x}{2}} g^{*j} (t) (g^{*(n-j)})'(x-t) dt + \int_0^{\frac{x}{2}} g^{*(n-j)} (t)  (g^{*j})' (x-t) dt \\ \label{eq:deriv_conv}
    & +  g^{*j} \left( \frac{x}{2} \right) g^{*(n-j)} \left( \frac{x}{2} \right).
\end{align}
\end{lemma}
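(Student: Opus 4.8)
The plan is to argue by induction on $n$, with a preliminary step establishing the base case $n=1$ and the regularity claim. First I would show that $g^{*n}$ is of class $C^1(]0,+\infty[)$ and null on $\mathbb{R}_-^*$: nullity on $\mathbb{R}_-^*$ is immediate since $g$ is supported on $[0,+\infty[$ and the convolution of functions supported on $[0,+\infty[$ is again supported there. For the $C^1$ regularity, the idea is to split the convolution integral symmetrically at the midpoint: for $x>0$,
\begin{align*}
    g^{*n}(x) = \int_0^{x/2} g^{*j}(t)\, g^{*(n-j)}(x-t)\, dt + \int_0^{x/2} g^{*(n-j)}(t)\, g^{*j}(x-t)\, dt
\end{align*}
(using $g^{*n}=g^{*j}*g^{*(n-j)}$ and the change of variables $t\mapsto x-t$ on the upper half). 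On each of these integrals the integrand is, for $t$ in a fixed compact subinterval of $[0,x/2]$ and $x$ ranging in a neighborhood of a fixed $x_0>0$, a product of a locally bounded factor ($g^{*j}(t)$, which is continuous hence bounded on the compact $[0,x/2]$ once we know $g^{*j}$ is continuous on $]0,+\infty[$, and at $0$ we use integrability) and a $C^1$ factor in $x$ ($g^{*(n-j)}(x-t)$, with $x-t$ bounded away from $0$ since $x-t\geq x/2>0$). The possible non-integrable singularity of the derivative sits only near $t=0$ in the first factor, but there the first factor carries no $x$-dependence — so differentiation under the integral sign is legitimate via dominated convergence, the dominating function near $t=0$ being $C\, g^{*j}(t)$ which is integrable. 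The boundary term $g^{*j}(x/2)g^{*(n-j)}(x/2)\cdot\frac12$ arises from differentiating the variable upper limit $x/2$; careful bookkeeping of the two symmetric pieces, each contributing $\frac12 g^{*j}(x/2)g^{*(n-j)}(x/2)$ from the Leibniz boundary term, yields exactly the single term $g^{*j}(x/2)g^{*(n-j)}(x/2)$ in \eqref{eq:deriv_conv}.

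The induction on $n$ then proceeds as follows. For $n=2$, $j$ must equal $1$, and the identity \eqref{eq:deriv_conv} is precisely the midpoint-splitting formula just described applied to $g*g$, with $g^{*1}=g$ of class $C^1(]0,+\infty[)$ by hypothesis; this simultaneously establishes that $g^{*2}\in C^1(]0,+\infty[)$. For the inductive step, assuming $g^{*k}\in C^1(]0,+\infty[)$ for all $k\le n-1$, one writes $g^{*n}=g^{*j}*g^{*(n-j)}$ with both factors now known to be $C^1(]0,+\infty[)$, integrable on $]0,+\infty[$, and non-negative and null on $\mathbb{R}_-^*$ — exactly the hypotheses under which the midpoint-splitting differentiation argument of the previous paragraph applies verbatim (with $g$ replaced by $g^{*j}$ and $g^{*(n-j)}$ respectively). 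This yields both $g^{*n}\in C^1(]0,+\infty[)$ and formula \eqref{eq:deriv_conv} for that particular $j$; since $j\in\{1,\dots,n-1\}$ was arbitrary, the formula holds for all such $j$.

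The main obstacle is the justification of differentiation under the integral sign near the singular endpoint $t=0$ (and, symmetrically, controlling the behaviour near $t=x/2$ where the two pieces are glued). The key structural observation that makes this work — and the reason the midpoint split is the right device — is that after splitting at $x/2$, in each integral the factor that may blow up near $t=0$ (namely $g^{*j}(t)$ or $g^{*(n-j)}(t)$) does not depend on $x$, so when we differentiate in $x$ the singular factor is untouched and only multiplies a uniformly controlled $C^1$ term whose argument $x-t$ stays in a compact subset of $]0,+\infty[$; integrability of $g^{*j}$ near $0$ (which follows from integrability of $g$ near $0$ and Young's inequality, or directly from $g^{*j}$ being a probability-type density up to normalisation) then supplies the dominating function. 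One should also check that $g^{*j}$ is genuinely continuous on $]0,+\infty[$ so that the boundary term $g^{*j}(x/2)g^{*(n-j)}(x/2)$ is well-defined and continuous in $x$; this is part of the inductive hypothesis. I would present the $n=2$ computation in full as the template and then invoke it for the general step.
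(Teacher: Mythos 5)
Your plan is correct and follows essentially the same route as the paper's proof: induction on $n$, the symmetric split of the convolution at $x/2$ so that the $x$-dependence sits only in the factor whose argument stays $\geq x/2$, domination near $t=0$ by the integrable function $C\,g^{*j}(t)$, and the boundary contribution from the moving limit giving $g^{*j}(x/2)\,g^{*(n-j)}(x/2)$. The paper merely carries out the Leibniz-rule step by hand on difference quotients (mean value theorem plus dominated convergence, together with an explicit check that the resulting derivative is continuous, which your sketch leaves implicit), so the two arguments coincide in substance.
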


\begin{remark}
Since in our case we study self-convoluted density functions, the previous lemma provides formulas of the derivative of such functions. Using the same arguments of the proof below, it is possible to generalize the lemma as follows. \\ 
\medskip
Let $g_1$, $g_2$ be non-negative functions of class $C^1(]0,+\infty[)$, null on $\mathbb{R}_-^*$, integrable on $]0,+\infty[$. Then for any $x>0$, $g_1*g_2$ is differentiable at $x$ and for any $y \in ]0,x[$, we have
\begin{align*}
    (g_1*g_2)'(x) & = \int_0^y g_1 (t) g_2'(x-t)dt + \int_0^{x-y} g_2 (t) g_1'(x-t)dt \\
    & + \frac{1}{2} (g_1(y)g_2(x-y) + g_1(x-y)g_2(y)).
\end{align*}
Lemma \ref{lem:: derivation_convo} is obtained considering $y=\frac{x}{2}$ and $g_1 = g_2 = g$.
\end{remark}

\begin{proof}
Note that the $n$-th convolution is always well defined on $]0, + \infty[$ because convolution of $L^1(]0,+\infty[) \cap C^1(]0,+\infty[)$ functions and for all $n \in \mathbb{N}^*$, $ g^{*n} \in  L^1(]0,+\infty[)$. We prove the result by induction on the number of iteration of the convolution. 
Let $n \in \mathbb{N^*}$. Assume that for any $k \in \{1, \ldots,n\}$, $g^{*k}$ is of class $C^1(]0,+\infty[)$ and that equality \eqref{eq:deriv_conv} holds. Let's prove that $g^{*(n+1)}$ of class $C^1(]0,+\infty[)$ and that equality \eqref{eq:deriv_conv} holds. 

\smallskip 
We firstly prove the continuity of the convoluted function.
Here, we cannot directly apply the continuity theorem of parametrized integrals since the upper bound of the integrals also depends on the variable of differentiation. Let's prove the right-continuity. The left-continuity can be proved the same way. 
Let $j \in \{ 1, \ldots , n \}$. Then $g^{*(n+1)}$ is defined on $]0,+ \infty[$ as
$$
    g^{*(n+1)}(x) = \int_0^x g^{*j} (t) g^{*(n-j+1)}(x-t) dt = \int_0^{+ \infty} g^{*j} (t) g^{*(n-j+1)}(x-t) \mathbb{1}_{\{ 0<t<x \}} dt.  
$$
In order to lighten the forthcoming formulas, we denote $g^{*j}$ as $g_1$ and $g^{*(n-j+1)}$ as $g_2$. 
Let $x>0$ and $0<h<\frac{x}{4}$.
\begin{align*}
    & g^{*(n+1)} (x+h) -  g^{*(n+1)} (x)  = \int_0^{x+h} g_1 (t) g_2(x+h-t) dt - \int_0^x g_1 (t) g_2(x-t) dt \\
    &\quad  = \underset{A(h)}{ \underbrace{\int_0^x g_1 (t) g_2(x+h-t) dt - \int_0^x g_1 (t) g_2(x-t) dt }} + \underset{B(h)}{\underbrace{ \int_x^{x+h} g_1 (t) g_2(x+h-t) dt } }.
\end{align*}
In the term $B(h)$, the function $g_1=g^{*j} $ is integrated on $]x,x+h[ \subset [x,x+\frac{x}{4}]$ (because $0<h<\frac{x}{4}$). But under the induction hypothesis, $g^{*j} \in C^1(]0,+\infty[)$. We especially have $g^{*j} \in C^0([x,x+\frac{x}{4}])$ and $g^{*j}$ is moreover bounded on $[x,x+\frac{x}{4}]$. We obtain
\begin{align*}
    0 \leq \int_x^{x+h} g_1 (t) g_2(x+h-t) dt \leq & ||g_1||_{\infty,[x,x+\frac{x}{4}]}  \int_x^{x+h} g_2(x+h-t) dt \\
    = & ||g_1||_{\infty,[x,x+\frac{x}{4}]}  \int_0^h g_2(t) dt.
\end{align*}
And since $g_2=g^{*(n-j+1)}$ is integrable, it is clear that the integral converges to 0 as $h$ converges to 0. \\
The term $A(h)$ can be re-written as follows.
\begin{align*}
    A(h) & = \int_0^x g_1 (t) g_2(x+h-t) dt - \int_0^x g_1 (t) g_2(x-t) dt    \\
    & = \int_0^{\frac{x}{2}} g_1 (t) g_2(x+h-t) dt - \int_0^{\frac{x}{2}} g_1 (t) g_2(x-t) dt \\
    & \quad +  \int_{\frac{x}{2}}^x g_1 (t) g_2(x+h-t) dt -\int_{\frac{x}{2}}^x g_1 (t) g_2(x-t) dt .
\end{align*}
By grouping the integral terms with respect to the interval of integration, we obtain
$$
    A(h) = \underset{C(h)}{ \underbrace{\int_0^{\frac{x}{2}} g_1 (t) [g_2(x+h-t) - g_2(x-t)] dt } }   + \underset{D(h)}{ \underbrace{ \int_{\frac{x}{2}}^x g_1 (t) [g_2(x+h-t)  -    g_2(x-t) ]dt } }.     
$$
To study $C(h)$, note that $| g^{*(n-j+1)}(x+h-t) - g^{*(n-j+1)}(x-t) | \leq g^{*(n-j+1)}(x+h-t) + g^{*(n-j+1)}(x-t) $, where for any $t \in ]0, \frac{x}{2}]$, $ x+h-t \in [\frac{x}{2}, x +\frac{x}{4}]  $ and $ x-t \in [\frac{x}{2}, x] \subset [\frac{x}{2}, x +\frac{x}{4}] $. But under induction hypothesis, $g_2 = g^{*(n-j+1)} \in C^1(]0,+\infty[)$ which implies that $g_2 \in C^0([\frac{x}{2},x+\frac{x}{4}])$ the function is bounded on $[\frac{x}{2},x+\frac{x}{4}]$. We can apply the dominated convergence to prove that the first integral converges to 0 as $h$ converges to 0. 
\begin{enumerate}
    \item   $t \in ]0,\frac{x}{2}] \mapsto g_1 (t) [g_2(x+h-t) - g_2(x-t)]$ is measurable (because continuous). 
    \item  $\left| g_1 (t) [g_2(x+h-t) - g_2(x-t)] \right| \leq g_1 (t) \times 2 ||g_2||_{\infty,[\frac{x}{2},x+\frac{x}{4}]} $, for any $t \in  ]0,\frac{x}{2}]$. And $g_1$ is integrable. 
    \item $ \lim_{h \to 0+}  g_1 (t) [g_2(x+h-t) - g_2(x-t)]  = 0$, for any $t \in ]0,\frac{x}{2}] $, because $g_2 = g^{*(n-j+1)}$ is continuous on $]0,+\infty[$.
\end{enumerate}
Since $0<h<\frac{x}{4} $ the following decomposition of $D(h)$ is well defined and
\begin{align*}
    D(h)& = 
    \int_{\frac{x}{2}}^{\frac{x}{2}+h}  g_1 (t) g_2(x+h-t) dt + \int_{\frac{x}{2}+h}^x g_1 (t) g_2(x+h-t) dt \\
    & \quad - \int_{\frac{x}{2}}^{x-h} g_1 (t) g_2(x-t) dt - \int_{x-h}^x g_1 (t) g_2(x-t) dt \\
    & =  \int_{\frac{x}{2}}^{\frac{x}{2}+h}  g_1 (t) g_2(x+h-t) dt + \int_{\frac{x}{2}}^{x-h} h_1 (t+h) h_2(x-t) dt \\
    & - \int_{\frac{x}{2}}^{x-h} g_1 (t) g_2(x-t) dt - \int_{x-h}^x g_1 (t) g_2(x-t) dt.
\end{align*}
By grouping the second and third integrals, we obtain
$$
     D(h)= \int_{\frac{x}{2}}^{x-h} [g_1 (t+h) - g_1 (t)] g_2(x-t) dt + \int_{\frac{x}{2}}^{\frac{x}{2}+h}  g_1 (t) g_2(x+h-t) dt  - \int_{x-h}^x g_1 (t) g_2(x-t) dt.
$$
To conclude, the convergence to 0 of the first integral is treated as we did with the term $C(h)$. The convergence to 0 of the last two terms is treated as we did with the term $B(h)$. 
We finally conclude that $g^{*(n+1)}$ is right-continuous on $]0,+ \infty[$.
The proof of the left-continuity uses the same arguments.
We deduce that $g^{*(n+1)}$ is of class $C^0(]0,+\infty[)$. 

\medskip

We can now prove that $g^{*(n+1)}$ is differentiable on $]0,+\infty[$, with continuous derivative. 
As explained in the proof of the continuity, since the bound of the integral depends on the variable of differentiation x, the usual theorem of differentiation of parametrized integrals can not be used. 
Let's prove that for any $x>0$, $g^{*(n+1)}$ is left and right differentiable and the left and right derivatives coincide. 

\smallskip
Let $x>0$ and $0<h<\frac{x}{4}$. 
\begin{align*}
    & \frac{1}{h} [ g^{*(n+1)} (x+h) - g^{*(n+1)} (x)  ] = \frac{1}{h} \left[ \int_0^{x+h} g_1 (t) g_2(x+h-t) dt - \int_0^x g_1 (t) g_2(x-t)dt \right] \\
    & \quad =  \frac{1}{h} \left[ \int_0^{\frac{x+h}{2}} g_1 (t) g_2(x+h-t) dt + \int_{\frac{x+h}{2}}^{x+h} g_1 (t) g_2(x+h-t)dt \right] \\
    & \qquad - \frac{1}{h} \left[ \int_0^{\frac{x}{2}} g_1 (t) g_2(x-t) dt + \int_{\frac{x}{2}}^x g_1 (t) g_2(x-t) dt \right] \\
    & \quad =  \frac{1}{h} \left[ \int_0^{\frac{x+h}{2}} g_1 (t) g_2(x+h-t) dt + \int_0^{\frac{x+h}{2}} g_1 (x+h -t) g_2(t) dt \right] \\
    &\qquad  - \frac{1}{h} \left[ \int_0^{\frac{x}{2}} g_1 (t) g_2(x-t) dt + \int_0^{\frac{x}{2}} g_1 (x-t) g_2(t) dt \right] \\
    & \quad =  \underset{E(h)}{ \underbrace{ \int_0^{\frac{x}{2}} g_1 (t) \frac{g_2(x+h-t) - g_2(x-t)}{h} dt } } +  \underset{F(h)}{ \underbrace{ \int_0^{\frac{x}{2}} g_2 (t) \frac{g_1 (x+h-t) - g_1 (x-t)}{h} dt } }  \\
    & \qquad +\dfrac{1}{2} \underset{G(h)}{ \underbrace{ \frac{2}{h} \int_{\frac{x}{2}}^{\frac{x}{2} + \frac{h}{2}}  [g_1 (t) g_2(x+h-t) + g_1 (x+h -t) g_2(t)] dt } }.
\end{align*}
Concerning the terms $E(h)$ and $F(h)$, since $g_2 = g^{*(n-j+1)}$ and $g_1 = g^{*j}$ are of $C^1(]0,+\infty[)$ by induction assumption, they necessarily are of class $C^1([\frac{x}{2},x+\frac{x}{4}])$ and the mean-value theorem states that $\forall x>0, \forall t \in ]0;\frac{x}{2}[, \forall h \in ]0;\frac{x}{4}[ $,
\begin{align*}
    & \left| \frac{g^{*(n-j+1)}(x+h-t) - g^{*(n-j+1)}(x-t)}{h}  \right| \leq || (g^{*(n-j+1)})' ||_{\infty , [\frac{x}{2},x+\frac{x}{4}]} \\
    & \left| \frac{g^{*j}(x+h-t) - g^{*j}(x-t)}{h}  \right| \leq || (g^{*j})' ||_{\infty , [\frac{x}{2},x+\frac{x}{4}]}
\end{align*}
Consequently, the integrated function of the two first integrals are bounded by integrable functions. And by the dominated convergence theorem, we deduce that
\begin{align*}
    & \lim_{h \xrightarrow{} 0^+} \int_0^{\frac{x}{2}} g_1 (t) \frac{g_2(x+h-t) - g_2(x-t)}{h} dt = \int_0^{\frac{x}{2}} g_1 (t) g_2'(x-t) dt , \\
    &  \lim_{h \xrightarrow{} 0^+} \int_0^{\frac{x}{2}} g_2 (t) \frac{g_1 (x+h-t) - g_1 (x-t)}{h} dt = \int_0^{\frac{x}{2}} g_2 (t)  g_1' (x-t) dt.
\end{align*}
Concerning the term $G(h)$, 
since $t \mapsto g_1 (t) g_2(x+h-t) + g_1 (x+h -t) g_2(t) = g^{*j} (t) g^{*(n-j+1)}(x+h-t) + g^{*j} (x+h -t) g^{*(n-j+1)}(t)$ is of class $C^0([\frac{x}{2}; \frac{x}{2} + \frac{h}{2}])$, we immediately deduce that 
\begin{align*}
    \lim_{h \xrightarrow{} 0^+}  \frac{2}{h} \int_{\frac{x}{2}}^{\frac{x}{2} + \frac{h}{2}}  [g_1 (t) g_2(x+h-t) + g_1 (x+h -t) g_2(t)] dt = g_1 \left( \frac{x}{2} \right) g_2 \left( \frac{x}{2} \right) + g_1 \left( \frac{x}{2} \right) g_2 \left( \frac{x}{2} \right).
\end{align*}
We can conclude that $g^{*(n+1)}$ is right differentiable and for any $x>0$,
\begin{align*}
    & \lim_{h \xrightarrow{} 0^+} \frac{1}{h} [ g^{*(n+1)} (x+h) - g^{*(n+1)} (x)  ] =  \int_0^{\frac{x}{2}} g^{*j} (t) (g^{*(n-j+1)})'(x-t) dt \\
    & \qquad + \int_0^{\frac{x}{2}} g^{*(n-j+1)} (t)  (g^{*j})' (x-t) dt
    +  g^{*j} \left( \frac{x}{2} \right) g^{*(n-j+1)} \left( \frac{x}{2} \right).
\end{align*}
The determination of the left derivative uses the same arguments (we separate the integral in two parts by cutting the interval in the middle). It is easy to see that the left derivative is the same as the right one. To end the proof we need to verify that the derivative of $(g^{*(n+1)})'$ is of class $C^0(]0,+\infty[)$, which is proved the same way as the continuity of $(g^{*(n+1)})$. Hence $g^{*(n+1)}$ is of class $C^1(]0,+\infty[)$. 
\end{proof}

We recall a classical result which states that the convolution of functions preserves the interval on which one of them is non-decreasing.
\begin{lemma}  \label{lem:: convo_of_increas_func}
Let $f,g$ be two non-negative and continuous functions on $\mathbb{R}_+^*$, equal to 0 on $\mathbb{R}_-^*$, integrable. Assume that $f$ is non-decreasing on $]0,\varepsilon]$, for some $\varepsilon>0$. Then $f*g$ is also non-decreasing on $]0,\varepsilon]$.
\end{lemma}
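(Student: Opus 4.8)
The statement is purely about order preservation under convolution on a fixed interval, so the plan is to compare $(f*g)(x)$ and $(f*g)(y)$ by hand for $0<x\le y\le\varepsilon$. First I would record that, since $g$ vanishes on $\mathbb{R}_-^*$ and $f(z-s)=0$ whenever $s>z$, for every $z\in\,]0,\varepsilon]$ one has the representation $(f*g)(z)=\int_0^z f(z-s)\,g(s)\,ds$; moreover $f$, being non-decreasing on $]0,\varepsilon]$ and non-negative, is bounded by $f(\varepsilon)$ there, so this integral is finite. Hence $f*g$ is well defined pointwise on $]0,\varepsilon]$ and it suffices to check monotonicity on that interval.

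Next, for $0<x\le y\le\varepsilon$ I would split
\[
(f*g)(y)-(f*g)(x)=\int_0^x\bigl(f(y-s)-f(x-s)\bigr)g(s)\,ds+\int_x^y f(y-s)\,g(s)\,ds .
\]
In the first integral, for $s\in\,]0,x[$ we have $0<x-s\le y-s\le y\le\varepsilon$, so the hypothesis that $f$ is non-decreasing on $]0,\varepsilon]$ gives $f(y-s)\ge f(x-s)$; combined with $g\ge 0$ this makes the first integrand non-negative. The second integrand is a product of non-negative functions. Both integrals are therefore $\ge 0$, whence $(f*g)(y)\ge (f*g)(x)$, which is the claim.

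There is no genuine obstacle in this argument; the only delicate point is to keep the two arguments $x-s$ and $y-s$ inside $]0,\varepsilon]$, the interval on which $f$ is assumed monotone, which is exactly why the comparison is restricted to $y\le\varepsilon$ and why one cannot conclude beyond $]0,\varepsilon]$. The assumed continuity of $f$ and $g$ plays no essential role here beyond measurability; what is used is non-negativity of $g$, integrability, and the local monotonicity of $f$.
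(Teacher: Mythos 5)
Your proof is correct and follows essentially the same route as the paper: the same splitting of $(f*g)(y)-(f*g)(x)$ into an integral over $[0,x]$ (non-negative by the local monotonicity of $f$ and $g\ge 0$) and an integral over $[x,y]$ of a non-negative function. The paper uses the continuity of $f$ only to extend it to $0$ and justify that the convolution is well defined, which your boundedness remark ($f\le f(\varepsilon)$ on $]0,\varepsilon]$) handles equally well.
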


\begin{proof}
Note that $f*g$ is well defined. $f$ is continuous on $\mathbb{R}_+^*$ and non-decreasing in a neighborhood of zero. Consequently, it can be extended by continuity at zero. Thus, we obtain the following domination: for any $x>0$
$$\left| \int_0^x g(t) f(x-t) dt  \right| \leq \sup_{t \in [0,x]} | f(t) | \left| \int_0^x g(t) dt \right|.$$ \\
Now let $x_1,x_2 \in ]0,\varepsilon] $, with $x_1 \leq x_2$. Then
\begin{align*}
    f*g(x_2) - f*g(x_1) & = \int_0^{x_2} g(t) f(x_2 - t) dt - \int_0^{x_1} g(t) f(x_1 -t) dt \\
    & = \int_0^{x_1} g(t) [f(x_2 -t)-f(x_1 -t)] dt + \int_{x_1}^{x_2} g(t) f(x_2 -t) dt.
\end{align*}
$f$ and $g$ are non-negative. Then the second integral in the right-hand side is the integral of a non-negative function. $f$ is non-decreasing on $]0,\varepsilon]$, then $ f(x_2 -t)-f(x_1 -t) \geq 0 $, $\forall t \in [0,x_1[$. Consequently, the first term of the right-hand side is also the integral of a non-negative function. 
Finally $f*g(x_2) - f*g(x_1) \geq 0$, that is $f*g$ is non-decreasing on $]0,\varepsilon]$.
\end{proof}

\begin{lemma}  \label{lemm: increas_propagation}
Let $g$ be a non-negative function of class $ C^0 ([0, + \infty[) \cap  C^1 (]0, + \infty[)$, identically null on $\mathbb{R}^*_-$. Assume the existence of $\varepsilon > 0$ such that $g$ is non-decreasing on $[0,\varepsilon]$. 
Then for any $k \in \mathbb{N}$, $g^{*2^{2k}}$ is non-decreasing on $\left[ 0, \left( \frac{3}{2} \right)^k \varepsilon \right]$.
\end{lemma}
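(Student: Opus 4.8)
I would argue by induction on $k$. The base case $k=0$ is exactly the hypothesis that $g$ is non-decreasing on $[0,\varepsilon]$. Assume the statement holds at rank $k$ and set $h:=g^{*2^{2k}}$ and $a:=(3/2)^k\varepsilon$, so that by the induction hypothesis $h$ is non-decreasing on $[0,a]$. Since $h$ is a self-convolution of $g$, it is non-negative and identically null on $\mathbb R_-^*$; moreover, after harmlessly multiplying $g$ by a smooth cutoff equal to $1$ on the fixed compact interval under consideration (which changes none of the convolutions $g^{*n}$ there), we may assume $g$ integrable, and then Lemma \ref{lem:: derivation_convo} shows $h\in C^1((0,+\infty))$, while convolution preserves continuity so $h\in C^0([0,+\infty))$. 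As $g^{*2^{2(k+1)}}=(g^{*2^{2k}})^{*4}=h^{*4}$ and $(3/2)^{k+1}\varepsilon=\tfrac32 a$, the inductive step reduces to the one-step statement: \emph{if $h$ is non-negative, of class $C^0([0,+\infty))\cap C^1((0,+\infty))$, null on $\mathbb R_-^*$, integrable and non-decreasing on $[0,a]$, then $h^{*4}$ is non-decreasing on $[0,\tfrac32 a]$.}

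To prove this one-step statement I would first invoke Lemma \ref{lem:: convo_of_increas_func} to note that $h^{*2}$ and $h^{*3}$ are again non-decreasing on $[0,a]$ and enjoy the same regularity as $h$, with in addition $h^{*2}(0)=h^{*3}(0)=0$. Writing $h^{*4}=h^{*2}*h^{*2}$ and differentiating via Lemma \ref{lem:: derivation_convo} (with $n=2$ applied to $u:=h^{*2}$ and the midpoint split) gives, for $x>0$,
$(h^{*4})'(x)=2\int_0^{x/2}u(t)\,u'(x-t)\,dt+\bigl(u(x/2)\bigr)^2$,
the boundary term $(u(x/2))^2$ being the distinctive output of Lemma \ref{lem:: derivation_convo}. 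For $x\le a$ every argument $x-t$ lies in $[0,a]$, so $u'(x-t)\ge 0$ and $(h^{*4})'(x)\ge 0$; this re-proves that $h^{*4}$ is non-decreasing on $[0,a]$ (and is also immediate from Lemma \ref{lem:: convo_of_increas_func}).

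For $x\in(a,\tfrac32 a]$ the plan is to split the integral at the value $t=x-a$ where the argument $x-t$ crosses $a$. On $t\in[x-a,\,x/2]$ one has $x-t\in[x/2,a]\subset[0,a]$, hence $u'(x-t)\ge 0$ and that part is non-negative. On the remaining \emph{boundary layer} $t\in[0,x-a)$ the argument $x-t$ lies in $(a,\tfrac32 a]$, but — and here the constant $3/2$ enters crucially — one simultaneously has $t<x-a\le\tfrac12 a\le a$, so $u(t)$ is in the controlled régime. On that layer I would integrate by parts (and, where needed, re-apply Lemma \ref{lem:: derivation_convo} once more to $u=h^{*2}$ near the origin), using $u(0)=h^{*2}(0)=0$ and the monotonicity of $h,h^{*2},h^{*3}$ on $[0,a]$, to rewrite the a priori negative contribution in terms of quantities evaluated only on $[0,\tfrac12 a]\subset[0,a]$; this is precisely why the fourth convolution power (rather than the second) and the factor $3/2$ are used — after one such integration by parts the surviving integrals involve arguments confined to $[0,a]$ and are again non-negative, while the leftover boundary terms are dominated by the positive term $(u(x/2))^2$ coming from Lemma \ref{lem:: derivation_convo}. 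Combining the two regions yields $(h^{*4})'(x)\ge 0$ on $[0,\tfrac32 a]$, completing the induction.

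The step I expect to be the main obstacle is exactly this boundary-layer estimate, i.e. controlling $(h^{*4})'(x)$ for $x$ slightly larger than $a$: $h$, $h^{*2}$ and $h^{*3}$ may behave arbitrarily on $(a,\tfrac32 a]$, and the naive integration-by-parts bound produces a term carrying the value of $h^{*2}$ (or $h^{*3}$) at $a$, which can be far larger than its values near $0$. Closing the argument requires exploiting that these functions are themselves convolutions, and the bookkeeping of positive versus negative contributions is tight — it is arranged to just work with the fourth power and the factor $3/2$. As a sanity check on this constant, when $h$ is compactly supported and unimodal (for instance the self-convolution of an increasing function that then decreases back to $0$), $h^{*4}$ is unimodal with mode roughly twice that of $h$, hence non-decreasing well beyond $\tfrac32 a$; so the claimed interval is consistent with, and somewhat conservative relative to, the truth.
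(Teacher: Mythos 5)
Your setup coincides with the paper's: induction on $k$, reduction to the one-step claim ``$h$ non-decreasing on $[0,a]$ implies $h^{*4}$ non-decreasing on $[0,\tfrac32 a]$'', the derivative formula $(h^{*4})'(x)=2\int_0^{x/2}u(t)\,u'(x-t)\,dt+\bigl(u(x/2)\bigr)^2$ with $u:=h^{*2}$ from Lemma \ref{lem:: derivation_convo}, and the (easy) case $x\le a$; the cutoff trick to secure integrability is also harmless. But the case $x\in\left(a,\tfrac32 a\right]$ is the entire content of the lemma, and your proposal does not prove it — you yourself flag the ``boundary-layer estimate'' as the main obstacle and leave it open. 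Moreover the plan you sketch does not close as written: integrating by parts on the layer $t\in[0,x-a)$ produces (after the term at $t=0$ vanishes because $u(0)=0$) the boundary term $-u(x-a)\,u(a)$ together with $\int_0^{x-a}u'(t)\,u(x-t)\,dt$, whose integrand involves values $u(x-t)$ with $x-t>a$, i.e.\ in the uncontrolled region; monotonicity of $u$ on $[0,a]$ only gives $u(x-a)\le u(x/2)\le u(a)$, so the inequality $\bigl(u(x/2)\bigr)^2\ge 2\,u(x-a)\,u(a)$ that your ``domination by the positive term $(u(x/2))^2$'' would require is simply not available (it already fails when $u$ is essentially constant on $[x-a,a]$), and nothing else in the sketch supplies the missing positivity.

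The key idea you are missing is how the paper handles exactly this point. It integrates by parts over the whole interval $[0,x/2]$ and absorbs $\bigl(u(x/2)\bigr)^2=2\int_0^{x/2}u'(t)u(t)\,dt$, arriving at the identity $(h^{*4})'(x)=2\int_0^{x/2}u'(t)\,\bigl[u(x-t)-u(t)\bigr]\,dt$, in which the derivative sits on the small argument $t\in[0,x/2]\subset[0,a]$ where $u'\ge0$. The remaining task is the pointwise inequality $u(x-t)\ge u(t)$ for all $x\le\tfrac32 a$ and $t\le x/2$, even when $x-t>a$; this is proved by returning to the representation $u=h*h$ and comparing the two convolution integrals after restricting the larger one to a shifted window of length $t$ (first at the endpoint $x=\tfrac32 a$, where one uses $t<\tfrac12 a$ and the monotonicity of $h$ on $[0,a]$ to match integrands term by term, then reducing general $x$ to the endpoint case via $t'=\tfrac32 a-x+t\le\tfrac12 a$). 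This window-shifting comparison is precisely where the factor $\tfrac32$ and the fourth convolution power are used, and without it (or an equivalent substitute) your inductive step is not established.
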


\begin{proof}
We will prove this result by induction on $k \in  \mathbb{N}$. The claim holds for $k=0$ by assumptions on $g$. 
In order to lighten the notations along the proof we pose for any $n \in \mathbb{N}$, $h_n = g^{*2^{n}}$ ; in particular $h_0 = g$.  
Before beginning the proof, note that Lemma \ref{lem:: derivation_convo} ensures that for any $n \in \mathbb{N}^*$, $g^{*n}$ is of class $ C^0 ([0, + \infty[) \cap  C^1 (]0, + \infty[)$. It is worth mentioning that Lemma \ref{lem:: derivation_convo} involves that $g^{*n}$ is of class $  C^1 (]0, + \infty[)$ and the continuity at zero can be deduced be observing that since $g$ is continuous on $[0, + \infty[$, we have for $n \geq 2$, $\displaystyle \lim_{x\to 0} g^{*n}(x) =  0$, which allows us to extend these functions to zero.
Assume that our statement holds for some $k \in \mathbb{N}$ and let's show the result for $k+1$ 
by proving that
$$\forall x \in \left[ 0, \left( \frac{3}{2} \right)^{k+1} \varepsilon \right], \quad (h_{2(k+1)})'(x) \geq 0  .$$ 
For any $x \in \left[ 0, \left( \frac{3}{2} \right)^{k+1} \varepsilon \right]$, we have
\begin{eqnarray*}
    h_{2(k+1)}(x) = ( h_{2k+1} * h_{2k+1} )(x) = \int_0^x h_{2k+1} (t) h_{2k+1} (x-t) dt = 2 \int_0^{\frac{x}{2}} h_{2k+1} (t) h_{2k+1} (x-t) dt.
\end{eqnarray*}
Lemma \ref{lem:: derivation_convo} tells that $x > 0$
\begin{align*}
     & (h_{2(k+1)})'(x)  =  2 \int_0^{\frac{x}{2}} h_{2k+1} (t) \left( h_{2k+1} \right)' (x-t) dt + \left[ h_{2k+1} \left( \frac{x}{2} \right) \right]^2  \\
     & \quad = 2 \left[ h_{2k+1} (t) \left( - h_{2k+1} (x-t) \right)  \right]_0^{\frac{x}{2}} + 2 \int_0^{\frac{x}{2}} \left( h_{2k+1} \right)' (t) h_{2k+1} (x-t) dt + \left[ h_{2k+1} \left( \frac{x}{2} \right) \right]^2  \\
     & \quad =  2   h_{2k+1} (0) h_{2k+1} (x) + 2 \int_0^{\frac{x}{2}} \left( h_{2k+1} \right)' (t) h_{2k+1} (x-t) dt - \left[ h_{2k+1} \left( \frac{x}{2} \right) \right]^2.
\end{align*}
Since $g$ is non-decreasing and continuous on $[0,\varepsilon]$, it induces  that $\forall n \geq 2 $, $g^{*n}(0) = 0$. Then
\begin{align} \nonumber
    (h_{2(k+1)})'(x) & = 2 \int_0^{\frac{x}{2}} \left( h_{2k+1} \right)' (t) h_{2k+1} (x-t) dt - \left[ h_{2k+1} \left( \frac{x}{2} \right) \right]^2 \\ \nonumber
    & = 2 \int_0^{\frac{x}{2}} \left( h_{2k+1} \right)' (t) h_{2k+1} (x-t) dt - \int_0^{\frac{x}{2}} 2 \left( h_{2k+1} \right)' (t) h_{2k+1} (t) dt \\ \label{eq:tech_positive_property_2}
    & =  2 \int_0^{\frac{x}{2}} \left( h_{2k+1} \right)' (t) \left[ h_{2k+1} (x-t) - h_{2k+1} (t) \right] dt.
\end{align}
We now verify that $\forall x \in \left[ 0, \left( \frac{3}{2} \right)^{k+1} \varepsilon \right]$, $\forall t \in \left[ 0 , \frac{x}{2}  \right]$, $\left( h_{2k+1} \right)' (t) \left[ h_{2k+1} (x-t) - h_{2k+1} (t) \right] \geq 0 $. 
Under induction hypothesis, $h_{2k}$ is non-decreasing on $\left[ 0, \left( \frac{3}{2} \right)^k \varepsilon \right]$. 
Since $h_{2k+1} = h_{2k} * h_{2k} $, we deduce from Lemma \ref{lem:: convo_of_increas_func} that $h_{2k+1}$ is non-decreasing on $\left[ 0, \left( \frac{3}{2} \right)^k \varepsilon \right]$. 
Consequently, if $x \in \left[ 0, \left( \frac{3}{2} \right)^{k+1} \varepsilon \right]$,  then $ \frac{x}{2} \in \left[ 0, \left( \frac{3}{2} \right)^k \varepsilon \right]$. Then for all $t \in \left[ 0 , \frac{x}{2}  \right]$, $\left( h_{2k+1} \right)' (t) \geq 0$. 

It remains to prove that 
\begin{equation} \label{eq:tech_positive_property_1}
  \forall x \in \left[ 0, \left( \frac{3}{2} \right)^{k+1} \varepsilon \right], \ \forall t \in \left[ 0 , \frac{x}{2}  \right], \quad  h_{2k+1} (x-t) - h_{2k+1} (t) \geq 0.  
\end{equation}
We proceed in two steps. 
\begin{enumerate}
    \item First we prove that
$$\forall t \in \left[ 0 , \frac{1}{2}  \left( \frac{3}{2} \right)^{k+1} \varepsilon  \right], \quad h_{2k+1} \left( \left( \frac{3}{2} \right)^{k+1} \varepsilon -t \right) - h_{2k+1} (t)  \geq 0.$$
Indeed if $ \left( \left( \frac{3}{2} \right)^{k+1} \varepsilon -t \right) \leq  \left( \frac{3}{2} \right)^{k} \varepsilon $, since $h_{2k+1}$ is non-decreasing on  $\left[ 0, \left( \frac{3}{2} \right)^k \varepsilon \right]$ and since $t \leq \left( \frac{3}{2} \right)^{k+1} \varepsilon -t$, we deduce the desired inequality. 
But if $ \left( \left( \frac{3}{2} \right)^{k+1} \varepsilon -t \right) >  \left( \frac{3}{2} \right)^{k} \varepsilon $, then $ t < \left( \frac{3}{2} \right)^{k+1} \varepsilon - \left( \frac{3}{2} \right)^{k} \varepsilon = \frac{1}{2} \left( \frac{3}{2} \right)^{k} \varepsilon  $. And because $ h_{2k+1} = h_{2k} * h_{2k} $, we have
\begin{align*}
    & h_{2k+1} \left( \left( \frac{3}{2} \right)^{k+1} \varepsilon -t \right) - h_{2k+1} (t) \\
    & \quad = \int_0^{\left( \frac{3}{2} \right)^{k+1} \varepsilon -t} h_{2k} (y)  h_{2k} \left( 
    \left( \frac{3}{2} \right)^{k+1} \varepsilon -t - y \right) d y   - \int_0^t h_{2k} (y) h_{2k} (t-y) d y \\
    &\quad  \geq \int_{\frac{1}{2} \left( \frac{3}{2} \right)^{k} \varepsilon - t}^{ \frac{1}{2} \left( \frac{3}{2} \right)^{k} \varepsilon } h_{2k} (y)  h_{2k} \left( \left( \frac{3}{2} \right)^{k+1} \varepsilon -t - y \right) dy  - \int_0^t h_{2k} (y) h_{2k} (t-y) dy \\
    & \quad = \int_0^t h_{2k} \left( y + \frac{1}{2} \left( \frac{3}{2} \right)^{k} \varepsilon - t \right)  h_{2k} \left( \left( \frac{3}{2} \right)^{k} \varepsilon - y \right) dy  - \int_0^t h_{2k} (y) h_{2k} (t-y) dy.
\end{align*}
Finally, since $t < \frac{1}{2} \left( \frac{3}{2} \right)^{k} \varepsilon $ and since $ h_{2k}$ is non-decreasing on  $\left[ 0, \left( \frac{3}{2} \right)^k \varepsilon \right]$, it induces that
\begin{eqnarray*}
    0 & \leq h_{2k} (y)  \leq h_{2k} \left( y + \frac{1}{2} \left( \frac{3}{2} \right)^{k}  \varepsilon - t \right),  \quad \quad \quad \forall y \in [0,t], \\
    0 & \leq h_{2k} (t-y)  \leq h_{2k} \left( \left( \frac{3}{2} \right)^{k} \varepsilon - y \right), \quad \quad \quad \forall y \in [0,t].
\end{eqnarray*}
Thus $ h_{2k+1} \left( \left( \frac{3}{2} \right)^{k+1} \varepsilon -t \right) - h_{2k+1} (t)$ is greater than a non-negative integral, which concludes the proof of this first step. 
\item  Now we deduce Inequality \eqref{eq:tech_positive_property_1}.
Let $ x \in \left[ 0, \left( \frac{3}{2} \right)^{k+1} \varepsilon \right], \; t \in \left[ 0 , \frac{x}{2}  \right] $. If $ x-t \leq \left( \frac{3}{2} \right)^{k} \varepsilon$, since $ h_{2k+1} $ is non-decreasing on  $\left[ 0, \left( \frac{3}{2} \right)^k \varepsilon \right]$, and since $t \leq x-t$,  $h_{2k+1} (x-t) - h_{2k+1} (t)  \geq 0$. If $ x-t \geq \left( \frac{3}{2} \right)^{k} \varepsilon$, we pose $t' = \left( \frac{3}{2} \right)^{k+1} \varepsilon - x + t \geq t  $. We have: $t' = \left( \frac{3}{2} \right)^{k+1} \varepsilon - x + t = \left( \frac{3}{2} \right)^{k+1} \varepsilon - (x - t) \leq \left( \frac{3}{2} \right)^{k+1} \varepsilon - \left( \frac{3}{2} \right)^{k} \varepsilon = \frac{1}{2} \left( \frac{3}{2} \right)^{k} \varepsilon$.
Again since $h_{2k+1}$ is non-decreasing on $\left[ 0, \left( \frac{3}{2} \right)^k \varepsilon \right]$, we obtain:
\begin{align*}
    h_{2k+1} (x-t) - h_{2k+1} (t) & = h_{2k+1} \left( \left( \frac{3}{2} \right)^{k+1} \varepsilon - t' \right) - h_{2k+1} (t) \\
      & \geq  h_{2k+1} \left( \left( \frac{3}{2} \right)^{k+1} \varepsilon - t' \right) - h_{2k+1} (t').
\end{align*} 
From the first step, we deduce that 
\begin{align*}
    h_{2k+1} (x-t) - h_{2k+1} (t) &\geq  h_{2k+1} \left( \left( \frac{3}{2} \right)^{k+1} \varepsilon - t' \right) - h_{2k+1} (t') \geq 0.
\end{align*} 
Hence Inequality \eqref{eq:tech_positive_property_1} is proved. 
\end{enumerate}
Combining \eqref{eq:tech_positive_property_1} with \eqref{eq:tech_positive_property_2}, we conclude that $(h_{2(k+1)})'(x) \geq 0$, $\forall x \in \left[ 0, \left( \frac{3}{2} \right)^{k+1} \varepsilon \right] $, because integral of non-negative functions. This achieves the proof. 
\end{proof}

\paragraph{Proof of Proposition \ref{propo:self_convolution}}
This result appears as a consequence of Lemmas \ref{lem:: regular_to_increas} and \ref{lemm: increas_propagation}. If the density function $f$ of a $\mathcal{C}_4$-variable verifies assertion $(ii)$ from Lemma \ref{lem:: regular_to_increas}, the existence of $\varepsilon > 0$ and $n \in \mathbb{N}^*$ such that $f^{*n}$ is non-decreasing on $]0,\varepsilon]$ can be deduced. Since $f^{*n}$ (for such $n$) can be extended as a non-decreasing continuous function on $[0,\varepsilon]$, we deduce from Lemma \ref{lemm: increas_propagation} the existence of $m \geq n$ such $f^{*m}$ is non-decreasing on $[0,x]$. It consequently suffices to prove that the regularity assumptions made on $f$ imply assertion $(ii)$ from Lemma \ref{lem:: regular_to_increas}.

Let us detail the two steps. First 
we know the existence of $\alpha > 0$ such that $f$ is regularly varying of index $\alpha -1$ in a neighborhood of 0 and that $f'$ is monotone in such neighborhood.
We consider different cases depending on the values of the index of regular variations. Let us fix $\beta = 1 - \frac{\alpha}{2}$. Then $\beta + 1 -\alpha = \frac{\alpha }{2} > 0$. 
From Corollary \ref{coro:: regular_var}, we deduce the existence of $\varepsilon >0$ such that $x \mapsto x^{\beta} f(x)$ is non-decreasing on $[0,\varepsilon]$.
Then for any $t \in ]0,\varepsilon]$, $c \in ]0,1]$:
$$
    (ct)^{\beta} f(ct) \leq t^{\beta}f(t) \quad \quad \iff \quad \quad f(ct) \leq c^{- \beta} f(t).
$$
From Lemma \ref{lem:: regular_to_increas}, we deduce the existence of an integer $N_1 \in \mathbb{N}^*$ such that $f_{S_{N_1}}$ is non-decreasing on $]0,\varepsilon]$. Since $f_{S_{N_1}}$ is of class $ C^1 (]0, + \infty[) $ and is non-decreasing on $]0,\varepsilon]$, it can be extended by continuity as a function of class $C^0 ([0, + \infty[) \cap C^1 (]0, + \infty[) $. W.l.o.g. we still denote this function by $f_{S_{N_1}}$.  

Secondly, applying Lemma \ref{lemm: increas_propagation} to $f_{S_{N_1}}$ leads to the property that for any $k \in \mathbb{N}$, $f_{S_{2^{2k} N_1}} $ is non-decreasing on $\left[ 0, \left( \frac{3}{2} \right)^k \varepsilon \right]$. We define $n_0 = \lceil \log \left( \frac{x}{\varepsilon} \right) / \log \left( \frac{3}{2} \right)  \rceil$. Then for $N_0 = 2^{2n_0} N_1$, $f_{S_{N_0}}$ is non-decreasing on $[0,x]$. From Lemma \ref{lemm: increas_propagation}, we deduce that it is true for any $n \geq N_0$. 

This achieves the proof of the Proposition.

\subsubsection{Proofs of Theorem \ref{theo:: class c_5} and Proposition \ref{prop:class_C_6}}

Here we prove the upper bounds for the ratio sequences when the random variables belong to the classes $\mathcal C_4$ and $\mathcal C_5$. Let us start the proof of Theorem \ref{theo:: class c_5}.
\begin{proof}
The proof of the theorem relies on the non-decreasing property of $f$ sufficiently convoluted established in Proposition \ref{propo:self_convolution} to deduce convexity inequalities that will induce the convergence result. \\
\medskip 
There exists $\alpha > 0$ such that $f$ is regularly varying of index $\alpha -1$ in a neighborhood of 0. 
 We deduce from \cite[Proposition 1.5.8]{bing:gold:teug:89} that
$
    \mathbb{P} [X_1 \leq t] \underset{t \xrightarrow{} 0^+}{\sim} \frac{1}{\alpha} t f(t).
$
Thus 
\begin{equation} \label{eq:mono_dens_equiv}
    \mathbb{P} \left[ X_1 \leq \frac{1}{n} \right] \underset{n \xrightarrow{} \infty}{\sim} \dfrac{1}{\alpha} f \left( \frac{1}{n} \right) \frac{1}{n}.
\end{equation}
Hence the behavior of this probability depends on the behavior of $f$ close to zero. When $\alpha > 1$, then the limit of $f$ at zero is zero; when $0 < \alpha < 1$, this limit is $\infty$. When $\alpha = 1$, since $f'$ is monotone on a neighborhood of zero, this property is also true for $f$, therefore the limit of $f$ at zero still exists in $[0,\infty]$.

Now using simple rules of integration, we obtain
$$
    \frac{\mathbb{P} \left[ S_{n+1} \leq x  \right]}{ \mathbb{P} \left[ S_n \leq x  \right] } = \int_0^x  f(y_1) \frac{F_{S_n} (x - y_1)}{F_{S_n}(x)} dy_1 = \int_0^x  f(y_1) \frac{ \mathbb{P} [S_n \leq x - y_1 ]}{\mathbb{P} [S_n \leq x ]} dy_1.
$$
As a consequence of Proposition \ref{propo:self_convolution}, if we set $K_x$ the minimal integer for which, $f_{S_{K_x}}$ is non-decreasing on $[0,x]$, 
from the proof of \cite[Theorem 4.6]{Sah25}, for any $n \geq K_x$, and for any $y_1 \in [0,x]$,
\begin{align} \label{eq::expo_bound_ratio}
    \frac{ \mathbb{P} [S_n \leq x - y_1 ]}{\mathbb{P} [S_n \leq x ]} \leq \left( \frac{x-y_1}{x} \right)^{ \frac{n}{K_x }  - 1  }.
\end{align}
Consequently
\begin{align} \nonumber
    \frac{\mathbb{P} \left[ S_{n+1} \leq x  \right]}{ \mathbb{P} \left[ S_n \leq x  \right] }  &  \leq \int_0^x  f(y_1) \left( \frac{x-y_1}{x} \right)^{ \frac{n}{K_x }  - 1  } dy_1 \\ \nonumber
    & = \int_0^{1/n}  f(y_1) \left( \frac{x-y_1}{x} \right)^{ \frac{n}{K_x }  - 1  } dy_1 + \int_{1/n}^{x}  f(y_1) \left( \frac{x-y_1}{x} \right)^{ \frac{n}{K_x }  - 1  } dy_1 \\ \nonumber
    & \leq \int_0^{1/n}  f(y_1)  dy_1 + \int_{1/n}^{x}  f(y_1) \left( \frac{x-y_1}{x} \right)^{ \frac{n}{K_x }  - 1  } dy_1 \\ \label{eq:density_case_upper_bound_1}
     & =  \mathbb{P} \left[ X_1 \leq \frac{1}{n} \right]
    + \int_{1/n}^{x}  f(y_1) \left( \frac{x-y_1}{x} \right)^{ \frac{n}{K_x }  - 1  } dy_1 .
\end{align} 
To end the proof, we compare the last integral with the probability $\mathbb P(X_1 \leq 1/n)$. Let us start with the easiest case.

\medskip 
\noindent \underline{Assume that $\lim_{x \xrightarrow{} 0^+} f(x)= + \infty$.} Evoke that this case is possible only if $0 < \alpha \leq 1$. From \eqref{eq:mono_dens_equiv},
$$\dfrac{1}{n}  \underset{n \xrightarrow{} \infty}{ = } o \left( \mathbb{P} \left[ X_1 \leq \frac{1}{n} \right] \right).$$
Since $f'$ and consequently $f$ are ultimately monotone\footnote{Indeed $f'$ is continuous and either non-decreasing or non-increasing on a neighborhood of zero. Thus its sign remains constant of such neighborhood.}, we know the existence of $\varepsilon_1 >0$ such that $f$ and $f'$ are monotone functions on $]0,\varepsilon_1[$. The limit at zero of the density function implies that $f$ is non-increasing on $]0,\varepsilon_1[$.
Then denote $N = \max \{ \left\lfloor \frac{1}{\varepsilon} \right\rfloor +1; \left\lfloor \frac{1}{\varepsilon_1} \right\rfloor +1 ;  K_x   \}$.
Using \eqref{eq:density_case_upper_bound_1}, for any $n \geq N$,
\begin{align*}
    \frac{\mathbb{P} \left[ S_{n+1} \leq x  \right]}{ \mathbb{P} \left[ S_n \leq x  \right] } & \leq \mathbb{P} \left[ X_1 \leq \frac{1}{n} \right]+ \int_{\frac{1}{n}}^x  f(y_1) \left( \frac{x-y_1}{x} \right)^{ \frac{n}{K_x }  - 1  } dy_1 \\
    & \leq  \mathbb{P} \left[ X_1 \leq \frac{1}{n} \right] + ||f||_{\infty , [1/n;x]} \int_{\frac{1}{n}}^x \left( \frac{x-y_1}{x} \right)^{ \frac{n}{K_x } n - 1  } dy_1 \\
    & \leq \mathbb{P} \left[ X_1 \leq \frac{1}{n} \right] + ||f||_{\infty , [1/n;x]} \frac{ x K_x}{n} .
\end{align*}
Since $f$ is non-increasing on $]0,\varepsilon_1[$, we have that $ ||f||_{\infty , [1/n;x]} \leq f(1/n) + ||f||_{\infty , [\varepsilon_1 ;x]}  $ and then:
\begin{align*}
    \frac{\mathbb{P} \left[ S_{n+1} \leq x  \right]}{ \mathbb{P} \left[ S_n \leq x  \right] } & \leq  \mathbb{P} \left[ X_1 \leq \frac{1}{n} \right] + f\left( \dfrac{1}{n}\right) \frac{ x K_x}{n} + \frac{ ||f||_{\infty , [\varepsilon_1 ;x]}  x K_x}{n} \\
    & \leq \left( 1 + 2 x \alpha K_x \right) \mathbb{P} \left[ X_1 \leq \frac{1}{n} \right] +  \frac{ ||f||_{\infty , [\varepsilon_1 ;x]}  xK_x}{n} \\
    & \leq \left( 1 + 2 x \alpha K_x \right) \mathbb{P} \left[ X_1 \leq \frac{1}{n} \right] + o \left( \mathbb{P} \left[ X_1 \leq \frac{1}{n} \right] \right).
\end{align*}
The last inequality is verified for $n$ sufficiently large. It is due to the obtained monotone density equivalence \eqref{eq:mono_dens_equiv}. 
Hence for $n$ sufficiently large
\begin{align*}
    \frac{\mathbb{P} \left[ S_{n+1} \leq x  \right]}{ \mathbb{P} \left[ S_n \leq x  \right] } \leq \left( 2 + 2 x \alpha K_x  \right) \mathbb{P} \left[ X_1 \leq \frac{1}{n} \right] .
\end{align*}
Finally Inequality \eqref{eq:first_ineq_thm_density} is obtained by considering  $C_x = 2 + 2 \alpha x K_x $. Define $N_x$ the minimal integer for which the previous inequality is verified for any greater integer. If we considered $\frac{\mathbb{P} \left[ S_{n+1} \leq t  \right]}{ \mathbb{P} \left[ S_n \leq t  \right] }$ for some $t\in [0,x]$, the dominating constant would have been smaller than the obtained constant, when $t=x$, leading to the uniform upper-bound.  

\medskip 
\noindent \underline{Assume that $f$ admits a finite positive limit at 0.} This case is possible only if $\alpha = 1$ and from \eqref{eq:mono_dens_equiv}, the sequences
$\dfrac{f(0)}{n}$ and $\mathbb{P} \left[ X_1 \leq \frac{1}{n} \right]$ are equivalent. The arguments are quite similar to the previous case. Indeed
\begin{align*}
    \int_{1/n}^{x}  f(y_1) \left( \frac{x-y_1}{x} \right)^{ \frac{n}{K_x }  - 1  } dy_1 \leq & \| f \|_{\infty , [0,x]} \dfrac{x K_x}{n} \left( 1 - \dfrac{1}{nx} \right)^{\frac{n}{K_x}}  \\
    \underset{n \to + \infty}{ \sim } & \| f \|_{\infty , [0,x]}  \dfrac{x K_x}{n} \exp \left( - \dfrac{K_x}{x} \right).
\end{align*}
Hence coming back to \eqref{eq:density_case_upper_bound_1}, we deduce that Inequality \eqref{eq:first_ineq_thm_density} holds.

\medskip 
\noindent \underline{Assume that the limit of $f$ at 0 is zero.} It implies that $\alpha \geq 1$ and from \eqref{eq:mono_dens_equiv}, we obtain that 
\begin{align*}
    \mathbb{P} \left[ X_1 \leq \frac{1}{n} \right] \underset{n \xrightarrow{} \infty}{=} o \left( \dfrac{1}{n} \right).
\end{align*}
Hence we cannot apply the previous arguments to get the desired result.
To end the proof, we now justify that 
\begin{align}\label{eq:density_case_upper_bound_4}
    \int_{1/n}^{x}  f(y_1) \left( \frac{x-y_1}{x} \right)^{ \frac{n}{K_x }  - 1  } dy_1  \underset{n \xrightarrow{} \infty}{=}  O \left( \mathbb{P} \left[ X_1 \leq \frac{1}{n} \right] \right).
\end{align}
To prove it, we fix some sequence $(u_n, \ n \geq 1)$ converging to zero, greater than $1/n$, and we split the integral into two terms:
\begin{align} \nonumber
    \int_{1/n}^{x}  f(y_1) \left( \frac{x-y_1}{x} \right)^{ \frac{n}{K_x }  - 1  } dy_1 &  = \int_{1/n}^{u_n}  f(y_1) \left( \frac{x-y_1}{x} \right)^{ \frac{n}{K_x }  - 1  } dy_1 \\ \label{eq:density_case_upper_bound_2}
    & + \int_{u_n}^{x}  f(y_1) \left( \frac{x-y_1}{x} \right)^{ \frac{n}{K_x }  - 1  } dy_1.
\end{align}

Since $f'$ is supposed to be monotone in a neighborhood of $0$, we deduce from Corollary  \ref{coro:: regular_var}, we deduce that if $ \alpha - 1 < \delta$, then $t \mapsto  f(t) t^{- \delta}$ is a non-increasing function in a certain neighborhood $]0,\epsilon[$ of the origin. Thus for $n$ sufficiently large: 
\begin{align*}
    & \int_{1/n}^{u_n}  f(y_1) \left( \frac{x-y_1}{x} \right)^{ \frac{n}{K_x }  - 1  } dy_1 = \int_{1/n}^{u_n}  f(y_1) y_1^{- \delta} y_1^{ \delta}  \left( \frac{x-y_1}{x} \right)^{ \frac{n}{K_x }  - 1  } dy_1 \\
    & \quad \leq f \left( \frac{1}{n} \right) n^{\delta} \int_{1/n}^{u_n} y_1^{ \delta}  \left( \frac{x-y_1}{x} \right)^{ \frac{n}{K_x }  - 1  } dy_1 \leq f \left( \frac{1}{n} \right) n^{\delta} \int_{1/n}^{x} y_1^{ \delta}  \left( \frac{x-y_1}{x} \right)^{ \frac{n}{K_x }  - 1  } dy_1.
\end{align*}
We claim that 
\begin{equation} \label{eq:integral_estim_proof_thm_density_case}
    \int_{1/n}^{x} y_1^{ \delta}  \left( \frac{x-y_1}{x} \right)^{ \frac{n}{K_x }  - 1  } dy_1 \underset{n \to \infty}{=} O (n^{-\delta-1}).
\end{equation}
The proof is set up just after the proof of the theorem. Then there exists a constant $C$
such that 
\begin{align*}
    & \int_{1/n}^{u_n}  f(y_1) \left( \frac{x-y_1}{x} \right)^{ \frac{n}{K_x }  - 1  } dy_1 \leq C f \left( \frac{1}{n} \right) n^{\delta} n^{-\delta-1} = C f \left( \frac{1}{n} \right)\dfrac{1}{n}.
\end{align*}
Using \eqref{eq:mono_dens_equiv}, we obtain that:
\begin{equation} \label{eq:density_case_upper_bound_3}
    \int_{1/n}^{u_n}  f(y_1) \left( \frac{x-y_1}{x} \right)^{ \frac{n}{K_x }  - 1  } dy_1 \leq C \mathbb{P} \left[ X_1 \leq \frac{1}{n} \right].
\end{equation}
For the second integral, we have
\begin{align*}
    & \int_{u_n}^x  f(y_1) \left( \frac{x-y_1}{x} \right)^{ \frac{n}{K_x }  - 1  } dy_1 \leq \|f\|_{\infty,[0,x]} \int_{u_n}^x   \left( \frac{x-y_1}{x} \right)^{ \frac{n}{K_x }  - 1  } dy_1 \\
    & \quad =\|f\|_{\infty,[0,x]} \dfrac{K_x}{n}  \left( 1 - \frac{u_n}{x} \right)^{ \frac{n}{K_x }    } = \|f\|_{\infty,[0,x]} \dfrac{K_x}{n}  \exp \left[ \dfrac{n}{K_x} \ln \left( 1 - \frac{u_n}{x} \right)\right].
\end{align*}
Choosing $u_n = xK_x \alpha \dfrac{\ln n}{n}$ we have 
$$\dfrac{1}{n}  \exp \left[\dfrac{n}{K_x} \ln \left( 1 - \frac{u_n}{x} \right) \right] \underset{n\to \infty}{\sim}\dfrac{1}{n^{\alpha+1}} \underset{n\to \infty}{=} o \left(\mathbb{P} \left[ X_1 \leq \frac{1}{n} \right] \right).$$
This estimate, together with \eqref{eq:density_case_upper_bound_2} and \eqref{eq:density_case_upper_bound_3}, leads to \eqref{eq:density_case_upper_bound_4}.
Again the uniform upper bound has not been detailed but the same proof considering $\frac{\mathbb{P} \left[ S_{n+1} \leq t  \right]}{ \mathbb{P} \left[ S_n \leq t  \right] }$ with $0 < t \leq x $ gives the same upper bound with a constant $C_t \leq C_x$. 

This achieves the proof of Theorem \ref{theo:: class c_5}.
\end{proof}

\paragraph{Proof of Estimate \eqref{eq:integral_estim_proof_thm_density_case}}
\begin{proof}
Recall that $\delta > 0$. Hence its integer part $\lfloor\delta \rfloor$ is non-negative. 
Let us first assume that $\delta$ is not an integer. Then we prove by induction that for any $k \in \{0, \ldots , \lfloor \delta \rfloor \}$, 
\begin{align} \nonumber
    & \int_{1/n}^{x} y_1^{ \delta} \left(  \frac{x-y_1}{x} \right)^{ \frac{n}{K_x }  - 1  } dy_1  =  \sum_{j=0}^k \left( \prod_{i=0}^j (\delta - i) \left( \frac{n}{K_x} + i \right)^{-1} \right) \frac{x^{j+1}}{\delta - j}  \left( \frac{1}{n} \right)^{\delta - j} \left( 1 - \frac{1}{nx}  \right)^{\frac{n}{K_x} + j}  \\ \label{induc:: integration_formula}
    & \qquad \qquad + x^{k+1} \left( \prod_{i=0}^k (\delta - i) \left( \frac{n}{K_x} + i \right)^{-1} \right)  \int_{1/n}^{x} y_1^{ \delta - k - 1} \left(  \frac{x-y_1}{x} \right)^{ \frac{n}{K_x }  + k  } dy_1.
\end{align}
Indeed when $k=0$, we have
\begin{align*}
     \int_{1/n}^{x} y_1^{ \delta} \left(  \frac{x-y_1}{x} \right)^{ \frac{n}{K_x }  - 1  } dy_1 = & - \frac{xK_x}{n} \left[ y_1^{ \delta}  \left(  \frac{x-y_1}{x} \right)^{ \frac{n}{K_x }  }  \right]_{\frac{1}{n}}^{x} + \frac{\delta x K_x}{n} \int_{1/n}^{x} y_1^{ \delta - 1 } \left(  \frac{x-y_1}{x} \right)^{ \frac{n}{K_x }   } dy_1 \\
     = & \frac{xK_x}{n} \left( \frac{1}{n} \right)^{\delta} \left( 1 - \frac{1}{nx}  \right)^{\frac{n}{K_x}} + \frac{\delta x K_x}{n} \int_{1/n}^{x} y_1^{ \delta - 1 } \left(  \frac{x-y_1}{x} \right)^{ \frac{n}{K_x }   } dy_1.
\end{align*}
With an integration by part, we also have for any $k$:
\begin{align*}
    & \int_{1/n}^{x} y_1^{ \delta - k - 1} \left(  \frac{x-y_1}{x} \right)^{ \frac{n}{K_x }  + k  } dy_1 = x \left( \frac{n}{K_x} +k+1 \right)^{-1} \left( \dfrac{1}{n}\right)^{ \delta - (k +1)}  \left( 1- \dfrac{1}{nx} \right)^{ \frac{n}{K_x } + (k +1)  }   \\
    & \qquad \qquad +  x  \left( \frac{n}{K_x} +k+1 \right)^{-1}( \delta - (k + 1) ) \int_{1/n}^{x} y_1^{ \delta - (k+1) - 1 } \left(  \frac{x-y_1}{x} \right)^{ \frac{n}{K_x }  +k+1 } dy_1.
\end{align*}
The result is then deduced from this equality and the induction hypothesis. \\
In particular when $k = \lfloor \delta \rfloor$ we deduce that:
\begin{align}  \nonumber
    & \int_{1/n}^{x} y_1^{ \delta}  \left(  \frac{x-y_1}{x} \right)^{ \frac{n}{K_x }  - 1  } dy_1 
    = \sum_{j=0}^{\lfloor \delta \rfloor} \left( \prod_{i=0}^j (\delta - i) \left( \frac{n}{K_x} + i \right)^{-1} \right) \frac{x^{j+1}}{\delta - j} \left( \frac{1}{n} \right)^{\delta - j} \left( 1 - \frac{1}{nx}  \right)^{\frac{n}{K_x} + j} \\ \label{eq:formula_delta_estim}
    & \qquad \qquad + x^{\lfloor \delta \rfloor +1} \left( \prod_{i=0}^{\lfloor \delta \rfloor} (\delta - i) \left( \frac{n}{K_x} + i \right)^{-1} \right)  \int_{1/n}^{x} y_1^{ \delta - \lfloor \delta \rfloor - 1} \left(  \frac{x-y_1}{x} \right)^{ \frac{n}{K_x }  + \lfloor \delta \rfloor  } dy_1. 
\end{align}
Observe that for any $j \in \{0, \ldots , \lfloor \delta \rfloor  \}$,
$$
    \left( \frac{1}{n} \right)^{\delta - j} \left( 1 - \frac{1}{nx}  \right)^{\frac{n}{K_x} + j}  \leq \left( \frac{1}{n} \right)^{\delta - j} \left( 1 - \frac{1}{nx}  \right)^{\frac{n}{K_x} }  \leq \left( \frac{1}{n} \right)^{\delta - j} \exp \left( - \dfrac{1}{xK_x} \right)
$$
and 
$$
\prod_{i=0}^j (\delta - i) \left( \frac{n}{K_x} + i \right)^{-1} = n^{-j-1} \prod_{i=0}^j (\delta - i) \left( \frac{1}{K_x} + \dfrac{i}{n} \right)^{-1}  \leq (K_x)^{j+1} n^{-j-1}\prod_{i=0}^j (\delta - i).
$$
One can notice that $\delta - \lfloor \delta \rfloor - 1 < 0 $ and that $y_1 \mapsto y_1^{ \delta - \lfloor \delta \rfloor - 1} $ is a decreasing function. Thus
\begin{align*}
    \int_{1/n}^{x} y_1^{ \delta - \lfloor \delta \rfloor - 1} \left(  \frac{x-y_1}{x} \right)^{ \frac{n}{K_x }  + \lfloor \delta \rfloor  } dy_1 &  \leq \left( \dfrac{1}{n} \right)^{\delta - \lfloor \delta \rfloor - 1} \int_{1/n}^{x} \left(  \frac{x-y_1}{x} \right)^{ \frac{n}{K_x }  + \lfloor \delta \rfloor  } dy_1 \\
    & = \left( \dfrac{1}{n} \right)^{\delta - \lfloor \delta \rfloor - 1} \left( \frac{n}{K_x} + \lfloor \delta \rfloor +1 \right)^{-1} \left( 1 - \dfrac{1}{nx}\right)^{\frac{n}{K_x} + \lfloor \delta \rfloor +1}
\end{align*}
Therefore we deduce that all terms in \eqref{eq:formula_delta_estim} are controlled by $C_x n^{-\delta-1}$. 
If $\delta = \lfloor \delta \rfloor$ is an positive integer, then Formula \eqref{induc:: integration_formula} holds only for $k\in \{0,\ldots,\delta-1\}$ and with $k=\delta-1$, Equation \eqref{eq:formula_delta_estim} becomes:
\begin{align*} \nonumber
    & \int_{1/n}^{x} y_1^{ \delta} \left(  \frac{x-y_1}{x} \right)^{ \frac{n}{K_x }  - 1  } dy_1  =  \sum_{j=0}^{\delta-1} \left( \prod_{i=0}^j (\delta - i) \left( \frac{n}{K_x} + i \right)^{-1} \right) \frac{x^{j+1}}{\delta - j}  \left( \frac{1}{n} \right)^{\delta - j} \left( 1 - \frac{1}{nx}  \right)^{\frac{n}{K_x} + j}  \\ 
    & \qquad \qquad + x^{\delta} \left( \prod_{i=0}^{\delta-1} (\delta - i) \left( \frac{n}{K_x} + i \right)^{-1} \right)  \int_{1/n}^{x}  \left(  \frac{x-y_1}{x} \right)^{ \frac{n}{K_x }  + \delta-1  } dy_1\\
    & \qquad = \sum_{j=0}^{\delta} \left( \prod_{i=0}^{j-1} (\delta - i) \left( \frac{n}{K_x} + i \right)^{-1} \right) x^{j+1} \left( \frac{n}{K_x} + j \right)^{-1}  \left( \frac{1}{n} \right)^{\delta - j} \left( 1 - \frac{1}{nx}  \right)^{\frac{n}{K_x} + j}.
\end{align*}
 The arguments and the conclusion remain the same. 
\end{proof}

\paragraph{Proof of Proposition \ref{prop:class_C_6}}

\begin{proof}
We fix $x>0$.
Let $Atom = \{ x_i,i\in D \}$ be the set of all the atoms of the random variable $X_1$. Let $x_{\min} = \inf \{ x_i, i \in D \}$ and $M_{\max} = \left\lceil \frac{x}{x_{\min}} \right\rceil $. Under our setting $x_{\min} > 0$ and $M_{\max}$ is well defined. The density function of the absolutely continuous part of $X_1$ is denoted by $f$. 
For any $k \leq M_{\max}$, define $\Diamond_{k,x} = \{ (x_{i_1}, \dots ,x_{i_k}) \in Atom^k; \;  \; \sum_{j=1}^k x_{i_j} < x \}$, the set of all $k$-tuples of atoms of the distribution of $X_1$ such that $\forall n \geq M_{\max},\forall k \leq M_{\max}; \forall (x_{i_1}, \dots ,x_{i_k}) \in \Diamond_x$: 
$$
    \mathbb{P}[S_n \leq x; \exists j_1<\ldots<j_k \leq n; X_{j_1} = x_{i_1}, \ldots ,X_{j_k} = x_{i_k}] > 0 .
$$
Then it is clear, using the total probabilities formula that for all $n \geq M_{\max}$:
\begin{align*}
    \mathbb{P}[S_{n+1} \leq x] = \sum_{k=0}^{M_{\max}} \sum_{z \in \Diamond_{k,x}} \mathbb{P} \big[ \{ S_{n+1} \leq x\} & \cap \{ \exists j_1<\ldots<j_k \leq n+1; (X_{j_1}, \ldots ,X_{j_k}) = z \} \\
    & \cap \{ \forall i \leq n+1;\forall h \leq k; i \neq j_h; X_i \not\in Atom \} \big]
\end{align*}
Note the necessity to order the indexes $j_1<\ldots<j_k \leq n+1$. If not, since every element of $\Diamond_{k,x}$ on which we apply a permutation action of the terms of the tuple, is a different element of $\Diamond_{k,x}$, almost every term in the sum would be counted more than twice. \\
To simplify the formulas, let us re-write $\forall n \geq M_{\max},\forall k \leq M_{\max}; \forall z \in \Diamond_{k,x}$:
$$
    A_{n,k,z} = \{ \exists j_1<\ldots<j_k \leq n; (X_{j_1}  \ldots ,X_{j_k}) = z \} \cap \{ \forall i \leq n+1;\forall h \leq k; i \neq j_h; X_i \not\in Atom \}
$$
Then 
\begin{align} \nonumber
    \mathbb{P}[S_{n+1} \leq x] = & \sum_{k=0}^{M_{\max}} \sum_{z \in \Diamond_{k,x}} \mathbb{P} \big[ \{ S_{n+1} \leq x\} \cap A_{(n+1),k,z}  \big] \\ \label{eq:thm_2_9_eq_1}
    = & \sum_{k=0}^{M_{\max}} \sum_{z \in \Diamond_{k,x}}  \frac{\mathbb{P} \big[ \{ S_{n+1} \leq x\} \cap A_{(n+1),k,z}  \big]}{\mathbb{P} \big[ \{ S_{n} \leq x\} \cap A_{n,k,z}  \big]} \mathbb{P} \big[ \{ S_{n} \leq x\} \cap A_{n,k,z}  \big]
\end{align}
We want to obtain a uniform upper bound for $\frac{\mathbb{P} \big[ \{ S_{n+1} \leq x\} \cap A_{(n+1),k,z}  \big]}{\mathbb{P} \big[ \{ S_{n} \leq x\} \cap A_{n,k,z}  \big]}$ that does not depend on the tuple $z$. If $z =(x_{i_1}, \dots ,x_{i_k})$, the sum $\sum z$ of $z$ is equal $\sum_{j=1}^k x_{i_j}$. Remark that 
$$\mathbb{P} \big[ S_{n+1} \leq x | A_{(n+1),k,z}  \big] = \mathbb{P} \left[\sum_{i=1}^{n+1-k} X^{\text{cont}}_i \leq x - \sum z \right]$$
where $(X^{\text{cont}}_n, \ n\geq 1)$ is a sequence of i.i.d. absolutely continuous random variables with density $f$. We denote the corresponding sum by $S_n^{\text{cont}}$. Thus
\begin{align} \nonumber
    \frac{\mathbb{P} \big[ \{ S_{n+1} \leq x\} \cap A_{(n+1),k,z}  \big]}{\mathbb{P} \big[ \{ S_{n} \leq x\} \cap A_{n,k,z}  \big]}&  =  \frac{\mathbb{P} \big[ S_{n+1} \leq x | A_{(n+1),k,z}  \big] \mathbb{P} [A_{(n+1),k,z}] }{\mathbb{P} \big[  S_{n} \leq x | A_{n,k,z}  \big] \mathbb{P} [A_{n,k,z}]} \\ \nonumber
    & =  \frac{\mathbb{P} [S_{n+1-k}^{\text{cont}} \leq x - \sum z]}{\mathbb{P} [S_{n-k}^{\text{cont}} \leq x - \sum z]} \frac{ \mathbb{P} [A_{(n+1),k,z}] }{ \mathbb{P} [A_{n,k,z}]} \\ \nonumber
    & \leq  \sup_{t \in [0;x]} \left| \frac{\mathbb{P} [S_{n+1-k}^{\text{cont}} \leq t]}{\mathbb{P} [S_{n-k}^{\text{cont}} \leq t]} \right| \frac{ \mathbb{P} [A_{(n+1),k,z}] }{ \mathbb{P} [A_{n,k,z}]}  \\ \label{eq:thm_2_9_eq_2}
    & \leq  \max_{k \in \{1,...,M_{\max} \} } \left(  \sup_{t \in [0;x]} \left| \frac{\mathbb{P} [S_{n+1-k}^{\text{cont}} \leq t]}{\mathbb{P} [S_{n-k}^{\text{cont}} \leq t]} \right|  \frac{ \mathbb{P} [A_{(n+1),k,z}] }{ \mathbb{P} [A_{n,k,z}]}\right). 
\end{align}
The first quotient can be dominated, since $(X_n^{\text{cont}})_{n \in \mathbb{N}^*}$ lies in class $\mathcal{C}_4$ (using Theorem \ref{theo:: class c_5}): there exist a constant $C_x' > 0$ and an integer $N_x \in \mathbb N^*$ such that for all $n \geq N_x$
$$
    \sup_{t \in [0,x]} \left| \frac{\mathbb{P} \left[ S_{n+1}^{\text{cont}} \leq t  \right]}{ \mathbb{P} \left[ S_n^{\text{cont}} \leq t  \right] } \right|  \leq C_x'  \mathbb P \left[ X^{\text{cont}}_1 \leq \dfrac{1}{n} \right].
$$
Concerning the second quotient, the probability of the events $A_{n,k,z}$ and $A_{(n+1),k,z}$ can be computed as it follows: if $z=(x_{i_1}, \dots ,x_{i_k})$,
\begin{align*}
   & \mathbb{P} [A_{n,k,z}] 
     =  \sum_{1 \leq j_1<\ldots<j_k \leq n } \mathbb{P} [ \{ (X_{j_1}  \ldots ,X_{j_k}) = z \} \cap \{ \forall i \leq n;\forall h \leq k; i \neq j_h; X_i \not\in Atom \} ] \\
    & \quad =  \sum_{1 \leq j_1<\ldots<j_k \leq n } \mathbb{P} [X_1 = X_1^{\text{cont}}]^{n-k} \prod_{h=1}^k \mathbb{P} [X_1 = x_{i_h}] = \binom{n}{k}  \mathbb{P} [X_1 = X_1^{\text{cont}}]^{n-k} \prod_{h=1}^k \mathbb{P} [X_1 = x_{i_h}]. 
\end{align*}
Consequently for all $n\geq M_{\max}$ and $k\leq M_{\max}$
\begin{eqnarray*}
    \frac{ \mathbb{P} [A_{(n+1),k,z}] }{ \mathbb{P} [A_{n,k,z}]} = \frac{n+1}{n} \frac{n-k}{n+1-k} \mathbb{P} [X_1 = X_1^{\text{cont}}] \leq 2
\end{eqnarray*}
and for all  $n \geq N_x + M_{\max} $
$$ \max_{k \in \{1,...,M_{\max} \} } \left(  \sup_{t \in [0;x]} \left| \frac{\mathbb{P} [S_{n+1-k}^{\text{cont}} \leq t]}{\mathbb{P} [S_{n-k}^{\text{cont}} \leq t]} \right|  \frac{ \mathbb{P} [A_{(n+1),k,z}] }{ \mathbb{P} [A_{n,k,z}]}\right) \leq 2 C_x'  \mathbb P \left[ X^{\text{cont}}_1 \leq \dfrac{1}{n-M_{\max}} \right]  .$$
Coming back to \eqref{eq:thm_2_9_eq_1}, together with \eqref{eq:thm_2_9_eq_2}, we obtain
for any $n \geq N_x + M_{\max} $
\begin{align*} \nonumber
    \mathbb{P}[S_{n+1} \leq x] & \leq 2 C_x' \mathbb P \left[ X^{\text{cont}}_1 \leq \dfrac{1}{n-M_{\max}} \right]   \sum_{k=0}^{M_{\max}} \sum_{z \in \Diamond_{k,x}}  \mathbb{P} \big[ \{ S_{n} \leq x\} \cap A_{n,k,z}  \big] \\
    & \leq 2 C_x' \mathbb P \left[ X^{\text{cont}}_1 \leq \dfrac{1}{n-M_{\max}} \right] \mathbb{P} \big[ S_{n} \leq x  \big] \\
    & \leq 2 C_x' \mathbb P \left[ X^{\text{cont}}_1 \leq \dfrac{N_x + M_{\max}}{N_x} \dfrac{1}{n} \right] \mathbb{P} \big[  S_{n} \leq x  \big] \\
    & \leq 4 C_x' \left(\dfrac{N_x + M_{\max}}{N_x} \right)^\alpha \mathbb P \left[ X^{\text{cont}}_1 \leq \dfrac{1}{n} \right] \mathbb{P} \big[  S_{n} \leq x  \big],
\end{align*}
where the last inequality is provided by the regular variation property.
This achieves the proof of the proposition. 
\end{proof}

\subsubsection{Lower bound and equivalence for the class $\mathcal C_4$}

Now we prove that the ratio sequences are bounded from below (Theorem \ref{theo::lower_bound_continuous}) and show the equivalence announced in Theorem \ref{theo:: equivalence_continuous_case}. Let us begin with the proof of the lower bound.

\begin{proof}
The remainder of the proof is given as follows: We first use the regular variation property of the density $f$ and the continuity of $f$ in $]0,+ \infty[$ to prove the existence of $\kappa_x >0$ such that $\forall (t,c) \in ]0,x] \times ]0,1[ $, $\mathbb{P} [X_1 \leq ct] \geq c^{\kappa_x} \mathbb{P} [X_1 \leq t] $. We then use this inequality to prove the proposed lower bound for the sequence $(c_{n,x})_{n \in \mathbb{N}}$. In this proof, we fix $x >0$.

\smallskip

\noindent \underline{\textbf{Step 1:}}
Since $(X_n)_{n \in \mathbb{N}^*} \in \mathcal C_4$, the density function $f$ is of class $C^{1} (]0,+\infty[) $ and there exists $\alpha > 0$, such that $f$ is regularly varying of index $\alpha -1$ at $0$. Due to Karamata's theorem (see \cite[Proposition 1.5.8]{bing:gold:teug:89}) we deduce that the distribution function $F$ of $X_1$ is regularly varying of index $\alpha$ at 0. Since $f'$ is supposed to be monotone in a neighborhood of $0$, it is also the case for $f$ and due to Corollary \ref{coro:: regular_var}, we deduce that for any $\beta > \alpha$, there exists $\varepsilon  >0$ such that $t \mapsto t^{-\beta}F(t) $ is non-increasing on $]0,\varepsilon]$. Thus
\begin{equation*}
    \forall t \in ]0,\varepsilon], \forall c \in ]0,1[, \quad \quad  t^{-\beta}F(t) \leq (ct)^{-\beta}F(ct) \quad \iff \quad  c^{\beta}F(t) \leq F(ct).
\end{equation*}
Consider such $\beta$ and $\varepsilon$. Since $f$ is of class $C^{1}([\varepsilon,x])$, we have $||f||_{\infty, [\varepsilon,x]} < \infty$. We prove that $c^{\kappa_x}F(t) \leq F(ct)$, $\forall (t,c) \in ]0,x] \times ]0,1[ $,  with $\kappa_x = \max \left\{ \beta, \frac{x ||f||_{\infty, [\varepsilon,x]}}{ F(\varepsilon)} + 1 \right\}$. To do so, we define the following functions for a fixed  $t \in [\varepsilon,x]$: 
\begin{align*}
    a_t : [\varepsilon,t] \xrightarrow{} & [F(\varepsilon), F(t)] \\
     y \mapsto & \left\{ \begin{matrix} F(\varepsilon) & \quad \quad \text{if} \quad \quad y \leq \varpi_{\varepsilon,t} \\
     F(t) + (y-t) ||f||_{\infty, [\varepsilon,x]} & \quad \quad \text{if} \quad \quad y > \varpi_{\varepsilon,t}
    \end{matrix} \right. 
    \\
    b_t : [0,t] \xrightarrow{} & [0, F(t)] \\
     y \mapsto & \left( \frac{y}{t} \right)^{\kappa_x} F(t)
\end{align*}
where
$$\varpi_{\varepsilon,t} = t + \dfrac{F(\varepsilon) - F(t)}{ ||f||_{\infty, [\varepsilon,x]}} =  \dfrac{F(\varepsilon) - F(t) + t ||f||_{\infty, [\varepsilon,x]}}{ ||f||_{\infty, [\varepsilon,x]}} \leq t.$$
Due to the mean value theorem, it is easy to verify that 
$\varepsilon \leq\varpi_{\varepsilon,t}$ and 
$F(y) \geq a_t(y), \forall y \in [\varepsilon,t]$. We now prove that $b_t(y) \leq a_t(y), \forall y \in [\varepsilon,t]$. $a_t$ is a piecewise linear function and $b_t$ is a strictly convex function. Due to those analytical properties, the graphs of $a_t$ and $b_t$ have at most two points of intersection. We have $a_t(t) = b_t(t)$. Let us assume for a while that $b_t \left(\varpi_{\varepsilon,t} \right) \leq  a_t \left(\varpi_{\varepsilon,t} \right)$. Since $a_t$ is constant before $\varpi_{\varepsilon,t}$ and $b_t$ is strictly increasing on its domain of definition, we deduce that $b_t(y) \leq a_t(y), \forall y \in \left[ \varepsilon, \kappa_{\varepsilon,t} \right]$. The inequality $b_t(y) \leq a_t(y)$ holds on $\left[ \kappa_{\varepsilon,t} , t \right]$ since the first point of intersection of $a_t$ and $b_t$ is $t$. We need to prove:
$$
b_t \left(\varpi_{\varepsilon,t} \right) 
    \leq  a_t \left(\varpi_{\varepsilon,t} \right) = F(\varepsilon).
$$
We have
\begin{align*}
    \log \left( \frac{b_t \left(\varpi_{\varepsilon,t} \right)}{F(\varepsilon)} \right) & = \kappa_x \log \left( 1 - \frac{F(t) - F(\varepsilon)}{t ||f||_{\infty, [\varepsilon,x]}} \right) + \log \left( \frac{F(t)}{F(\varepsilon)} \right) \\
    & = \kappa_x \log \left( 1 - \frac{F(t) - F(\varepsilon)}{t ||f||_{\infty, [\varepsilon,x]}} \right) + \log \left(1 + \frac{F(t) - F(\varepsilon)}{F(\varepsilon)} \right).
\end{align*}
Since $\log(1+x) \leq x, \forall x > 0$, we obtain
\begin{align*}
    \log \left( \frac{b_t \left(\varpi_{\varepsilon,t} \right)}{F(\varepsilon)} \right) & \leq - \kappa_x \frac{F(t) - F(\varepsilon)}{t ||f||_{\infty, [\varepsilon,x]}}  + \frac{F(t) - F(\varepsilon)}{F(\varepsilon)} \\
    & \leq -\left( \frac{x ||f||_{\infty, [\varepsilon,x]}}{ F(\varepsilon)} + 1 \right) \frac{F(t) - F(\varepsilon)}{t ||f||_{\infty, [\varepsilon,x]}} + \frac{F(t) - F(\varepsilon)}{F(\varepsilon)} \\
    & = \left( 1 - \frac{x}{t} \right) \frac{F(t) - F(\varepsilon)}{F(\varepsilon)} - \frac{F(t) - F(\varepsilon)}{t ||f||_{\infty, [\varepsilon,x]}} \leq 0.
\end{align*}
Therefore we have: $b_t(y) \leq a_t(y) \leq F(y), \forall \varepsilon \leq y \leq t$. To end the proof of the proposed inequality, we need to verify if $b_t(y) \leq F(y), \forall 0 < y \leq \varepsilon$. We already have $c^{\beta}F(t) \leq F(ct), \forall t \in ]0,\varepsilon], \forall c \in ]0,1[$, which can be reformulated as follows
\begin{equation*}
     \left( \frac{y_1}{y_2} \right)^{\beta} \leq \frac{F(y_1)}{F(y_2)}, \quad \quad \forall 0 < y_1 \leq y_2 \leq \varepsilon.
\end{equation*}
Let $y \in ]0,\varepsilon]$, $t \in [\varepsilon,x]$. We have
$$
    \frac{b_t(y) }{F(t)} = \left( \frac{y}{t} \right)^{\kappa_x} = \left( \frac{y}{\varepsilon} \right)^{\kappa_x} \left( \frac{\varepsilon}{t} \right)^{\kappa_x} \leq \left( \frac{y}{\varepsilon} \right)^{\beta} \left( \frac{\varepsilon}{t} \right)^{\kappa_x} \leq \frac{F(y)}{F(\varepsilon)} \frac{F(\varepsilon)}{F(t)},
$$
which proves that $b_t(y) \leq  F(y)$ on $[0,t]$. 

\smallskip

\noindent \underline{\textbf{Step 2:}} 
We prove by induction on $k \in \mathbb{N}^*$, that for any $0< t_1 < t_2 < x$, 
$$\frac{ \mathbb{P} [S_k \leq t_2 - t_1 ]}{\mathbb{P} [S_k \leq t_2 ]} \geq \left(1 - \frac{t_1}{t_2} \right)^{k \kappa_x}.$$
Let $0< t_1 < t_2 < x$. For $k=1$ this inequality follows from Step 1. 
Assume that the property holds for any $1 \leq m \leq k$. Then
\begin{align*}
    \mathbb{P} [S_{k+1} \leq t_2 - t_1 ] = 
    & \int_0^{t_2 - t_1} f(t) \mathbb{P} [S_{k} \leq t_2 - t_1 - t ] dt 
    = \int_0^{t_2 - t_1} f(t) \mathbb{P} \left[ S_{k} \leq \frac{t_2 - t_1}{t_2} \left( t_2 - \frac{t_2 }{t_2- t_1} t \right) \right] dt \\
    \geq 
    & \int_0^{t_2 - t_1} f(t) \mathbb{P} \left[ S_{k} \leq t_2 - \frac{t_2 }{t_2- t_1}  t  \right]  
    \left( \frac{t_2 - t_1}{t_2} \right)^{k \kappa_x} dt
\end{align*}
where we used the property for $S_k$ with $0<  \frac{t_1}{t_2} \left( t_2 - \frac{t_2 }{t_2- t_1} t \right) <  t_2 - \frac{t_2 }{t_2- t_1}  t < x$. 
By an integration by part, since $\mathbb{P}[X_1 \leq 0] = \mathbb{P}[S_k \leq 0] = 0 $, we have
\begin{align*}
    \int_0^{t_2 - t_1} f(t) \mathbb{P} & \left[ S_{k} \leq t_2 -   \frac{t_2 }{t_2- t_1}  t  \right]  \left( \frac{t_2 - t_1}{t_2} \right)^{k \kappa_x} dt \\
    & \quad = \int_0^{t_2 - t_1} \mathbb{P} [X_1 \leq t ] f_{S_k} \left(t_2 - \frac{t_2 }{t_2- t_1}  t  \right) \left( \frac{t_2 - t_1}{t_2} \right)^{k \kappa_x - 1} dt \\
    & \quad \geq \int_0^{t_2 - t_1} \mathbb{P} \left[ X_1 \leq \frac{t_2 }{t_2- t_1} t \right] f_{S_k} \left(t_2 - \frac{t_2 }{t_2- t_1}  t  \right) \left( \frac{t_2 - t_1}{t_2} \right)^{(k+1) \kappa_x - 1} dt \\
    & \quad =  \int_0^{t_2 } \mathbb{P} \left[ X_1 \leq u \right] f_{S_k} \left(t_2 - u  \right) \left( \frac{t_2 - t_1}{t_2} \right)^{(k+1) \kappa_x } du  \\
    & \quad = \left( \frac{t_2 - t_1}{t_2} \right)^{(k+1) \kappa_x } \mathbb{P} [S_{k+1} \leq t_2 ]
\end{align*}
where the inequality follows from the property applied for $k=1$ and $0 < \frac{t_1}{t_2-t_1} t < \frac{t_2}{t_2-t_1} t< x$. 
We finally obtain
\begin{align*}
    \frac{\mathbb{P} [S_{k+1} \leq t_2 - t_1 ]}{\mathbb{P} [S_{k+1} \leq t_2 ]} \geq \left( 1 -  \frac{ t_1}{t_2} \right)^{(k+1) \kappa_x } ,
\end{align*}
which ends the proof of the induction. 

\smallskip

\noindent \underline{\textbf{Step 3:}} 
Let $n \in \mathbb{N}^*$. 
To conclude the proof, we have
\begin{align*}
    \frac{\mathbb{P} \left[ S_{n+1} \leq x  \right]}{ \mathbb{P} \left[ S_n \leq x  \right] } & \geq \frac{\mathbb{P} \left[ S_{n} \leq x - \frac{1}{n}   \right] \mathbb{P} \left[ X_{n+1} \leq  \frac{1}{n}   \right]}{ \mathbb{P} \left[ S_n \leq x  \right] } \\
    & \geq \mathbb{P} \left[ X_{1} \leq  \frac{1}{n}   \right] \left( 1 -  \frac{ 1}{n x} \right)^{(n+1) \kappa_x }
\end{align*}
We have $\underset{n \xrightarrow{} \infty}{ \lim } \left( 1 -  \frac{ 1}{n x} \right)^{(n+1) \kappa_x } = \exp \left( - \frac{\kappa_x}{x} \right)$. We finally obtain for n sufficiently large
\begin{align*}
    \frac{\mathbb{P} \left[ S_{n+1} \leq x  \right]}{ \mathbb{P} \left[ S_n \leq x  \right] } \geq \mathbb{P} \left[ X_{1} \leq  \frac{1}{n}   \right] \frac{1}{2} \exp \left( - \frac{\kappa_x}{x} \right).
\end{align*}
\end{proof}

\paragraph{Proof of Theorem \ref{theo:: equivalence_continuous_case}}

In order to enlighten the proof, we introduce the following lemma.
\begin{lemma} \label{lem: equivalence_inequ}
Consider two sequences of positive real numbers $(a_n)_{n \in \mathbb{N}}$ and $(b_n)_{n \in \mathbb{N}}$. Assume that $a_n \underset{n \to + \infty}{\sim} b_n$. Moreover assume that there exists two more sequences of positive real numbers $(c_n)_{n \in \mathbb{N}}$ and $(d_n)_{n \in \mathbb{N}}$ such that $c_n \underset{n \to + \infty}{\sim} d_n$ and for $n$ sufficiently large, 
$c_n \leq a_n$ and $b_n \leq d_n$.
Then, $a_n \underset{n \to + \infty}{\sim} c_n$.
\end{lemma}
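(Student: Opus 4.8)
The statement is a two-sided squeeze, so the plan is to control $a_n/c_n$ from above and below separately and conclude it tends to $1$. First I would record the easy lower bound: by hypothesis there is $N_1$ such that $c_n \le a_n$ for all $n \ge N_1$, hence $a_n/c_n \ge 1$ for $n \ge N_1$ (all terms being positive, the quotients are well defined), which gives $\liminf_{n\to\infty} a_n/c_n \ge 1$.

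For the upper bound I would factor the ratio as
\begin{equation*}
\frac{a_n}{c_n} = \frac{a_n}{b_n}\cdot\frac{b_n}{d_n}\cdot\frac{d_n}{c_n},
\end{equation*}
which is licit for $n$ large since $b_n, c_n, d_n > 0$. By assumption $a_n/b_n \to 1$ and $d_n/c_n \to 1$, while $b_n/d_n \le 1$ for $n$ beyond some $N_2$ (since $b_n \le d_n$ eventually). Passing to the limit superior and using that the product of a convergent factor with limit $1$, a factor bounded by $1$, and another convergent factor with limit $1$ has limit superior at most $1$, I obtain $\limsup_{n\to\infty} a_n/c_n \le 1$.

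Combining the two bounds yields $\lim_{n\to\infty} a_n/c_n = 1$, i.e. $a_n \underset{n\to\infty}{\sim} c_n$, which is the claim. There is essentially no hard step here; the only point requiring a little care is bookkeeping of the ``for $n$ sufficiently large'' quantifiers (taking $n \ge \max(N_1,N_2)$ and past the ranks where the asymptotic equivalences make the factors positive and close to $1$) and keeping track of the direction of the inequalities $c_n \le a_n$ and $b_n \le d_n$ so that the squeeze closes from both sides.
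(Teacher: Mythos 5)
Your proof is correct and follows essentially the same route as the paper: the lower bound $a_n/c_n \geq 1$ from $c_n \leq a_n$, and an upper bound obtained by factoring the ratio through $b_n$ and $d_n$ (the paper writes $1 \leq a_n/c_n \leq (a_n/b_n)(d_n/c_n)$, which is the same squeeze as your three-factor decomposition with $b_n/d_n \leq 1$). Nothing more is needed.
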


\begin{proof}
Due to the assumptions made on the sequences, we deduce the existence of $N \in \mathbb{N}$ such that for any $n \geq N$, $c_n \leq a_n$ and $b_n \leq d_n$. We consequently deduce from these inequalities that for $n \geq N$,
$1 \; \leq  \; \dfrac{a_n}{c_n} \; \leq \; \dfrac{a_n}{b_n} \dfrac{d_n}{c_n}$.
Finally, $a_n \underset{n \to + \infty}{\sim} b_n$ and $c_n \underset{n \to + \infty}{\sim} d_n$, which implies that the right side term converges to $1$, which ends the proof.
\end{proof}

\begin{proof}[Proof of the theorem]
The remainder of the proof is given as follows. We first prove that 
\begin{align*}
    \mathbb{P} [S_n \leq x ] \underset{n \to + \infty}{\sim} \mathbb{P} [S_n \leq x, X_i \leq \varepsilon_n, \forall i \in \{1, \ldots , n \} ],
\end{align*}
where $(\varepsilon_n)_{n \in \mathbb{N}^*} = \left( \dfrac{(\alpha + 2) x K_x \log (n)}{ n} \right)_{n \in \mathbb{N}^*}$. Recall that  $K_x$ is the minimal integer for which, $f_{S_{K_x}}$ is non-decreasing on $[0,x]$. 

In a second step, from the regular variation assertion on the density function of $X_1$, we deduce the existence of two sequences $(\beta_n^+)_{n \in \mathbb{N}^*}$ and $(\beta_n^-)_{n \in \mathbb{N}^*}$ respectively non-decreasing and non-increasing that converge to $\alpha$ such that for $n$ sufficiently large:
\begin{align} \label{eq:lestim_equivalence1}
    \int_0^{\varepsilon_{n}} f(t) \left( \dfrac{x-t}{x} \right)^{n \beta_n^-} dt & \leq \dfrac{\mathbb{P} [S_{n+1} \leq x, X_i \leq \varepsilon_{n}, \forall i \in \{1, \ldots , n +1 \} ]}{\mathbb{P} [S_n \leq x, X_i \leq \varepsilon_n, \forall i \in \{1, \ldots , n \} ]} \\
    \label{eq:uestim_equivalence1}
 \int_0^{\varepsilon_{n}} f(t) \left( \dfrac{x-t}{x} \right)^{n \beta_n^+} dt & \geq \dfrac{\mathbb{P} [S_{n+1} \leq x, X_i \leq \varepsilon_n, \forall i \in \{1, \ldots , n +1 \} ]}{\mathbb{P} [S_n \leq x ]}.
\end{align}
In the third step, we prove that
\begin{equation} \label{eq:equivalence2}
    \int_0^{\varepsilon_{n}} f(t) \left( \dfrac{x-t}{x} \right)^{n \beta_n^-} dt \underset{n \to + \infty}{\sim} \int_0^{\varepsilon_{n}} f(t) \left( \dfrac{x-t}{x} \right)^{n \beta_n^+} dt \underset{n \to + \infty}{\sim}  \int_0^{\varepsilon_{n}} f(t) \left( \dfrac{x-t}{x} \right)^{n \alpha} dt.
\end{equation}
In the last step, applying Lemma \ref{lem: equivalence_inequ}, one first deduce that 
$$\dfrac{\mathbb{P} [S_{n+1} \leq x ]}{\mathbb{P} [S_n \leq x ]}   \underset{n \to + \infty}{\sim} \int_0^{\varepsilon_{n}} f(t) \left( \dfrac{x-t}{x} \right)^{n \alpha} dt,$$
and then 
that for any function $l$ that diverges to the infinity at infinity and which verifies $l(n)/n \underset{n \to + \infty}{=} o(\varepsilon_n)$, we have 
\begin{align*}
    \int_0^{\varepsilon_{n}} f(t) \left( \dfrac{x-t}{x} \right)^{n \alpha} dt \underset{n \to + \infty}{\sim} \int_0^{\frac{l(n)}{n}} f(t) \left( \dfrac{x-t}{x} \right)^{n \alpha} dt.
\end{align*}

\medskip
\noindent \underline{\textbf{Step 1:}} As explained in the remainder of the proof, the first key step in deriving the asymptotic equivalent of the sequence $(c_{n+1,x}/c_{n,x})_{n \in \mathbb{N}}$ consists in showing that, for any $x > 0$, 
\begin{align*}
    \mathbb{P} [S_n \leq x ] \underset{n \to + \infty}{\sim} \mathbb{P} [S_n \leq x, X_i \leq \varepsilon_n, \forall i \in \{ 1, \ldots , n \} ],
\end{align*}
with $\varepsilon_n = \frac{A \log (n)}{n}$ and $A = (\alpha+2)xK_x$. Let $n \geq 2$. We can split the probability $\mathbb{P} [S_{n+1} \leq x ]$ as the sum
\begin{align*}
    \mathbb{P} [S_{n+1} \leq x, X_i \leq \varepsilon_{n+1}, \forall i \in \{1, \ldots , n +1 \} ] + \mathbb{P} [S_{n+1} \leq x, \exists i \in \{1, \ldots , n +1 \}, X_i > \varepsilon_{n+1} ].
\end{align*}
We also have
\begin{align*}
    \dfrac{\mathbb{P} [S_{n+1} \leq x, \exists i \in \{1, \ldots , n +1 \}, X_i > \varepsilon_{n+1} ] }{\mathbb{P} [S_n \leq x ] } & \leq \sum_{k=1}^{n+1} \dfrac{\mathbb{P} [S_{n+1} \leq x, X_k > \varepsilon_{n+1} ] }{\mathbb{P} [S_n \leq x ] } \\ 
    & \leq \sum_{k=1}^{n+1} \dfrac{\mathbb{P} [S_{n+1} - X_k \leq x - \varepsilon_{n+1} ] \mathbb{P} [X_k > \varepsilon_{n+1} ] }{\mathbb{P} [S_n \leq x ] } \\
    & \leq (n+1) \dfrac{\mathbb{P} [S_{n} \leq x - \varepsilon_{n+1} ] }{\mathbb{P} [S_n \leq x ] }.
\end{align*}
It has been justified in the proof\footnote{It is proved for a fixed $0<t<x$ instead of $\varepsilon_{n+1}$. But straightforward modifications can be done to extend the result, since the threshold $K_x$ just depends on $x$, neither on $t$ nor on $\varepsilon_{n+1}$.} of Theorem \ref{theo:: class c_5} (see Inequality (\ref{eq::expo_bound_ratio})) that there exists $K_x \in \mathbb{N}^*$ such that for $n \geq K_x$ which also verifies $\varepsilon_{n+1} \leq x/2$,
$$  \dfrac{\mathbb{P} [S_{n} \leq x - \varepsilon_{n+1} ] }{\mathbb{P} [S_n \leq x ] } \leq \left( \dfrac{x - \varepsilon_{n+1} }{x} \right)^{n/K_x -1} . $$
We define 
$$N_1 = \min \{ n \geq K_x \; | \; \forall k \geq n , \varepsilon_{n} \leq x/2 \}.$$ 
We consequently deduce that for any $n \geq N_1$,
\begin{align*}
    \dfrac{\mathbb{P} [S_{n+1} \leq x, \exists i \in \{1, \ldots , n +1 \}, X_i > \varepsilon_{n+1} ] }{\mathbb{P} [S_n \leq x ] } \leq (n+1) \left( \dfrac{x - \varepsilon_{n+1}}{x} \right)^{n/K_x -1} .
\end{align*}
Note that with $A=(\alpha+2)xK_s$
\begin{align*}
    \left( \dfrac{x - \varepsilon_{n+1}}{x} \right)^{n/K_x -1} & =  \left( 1 - \dfrac{A \log (n+1) }{(n+1)x} \right)^{n/K_x -1} \\
    & \leq \left( 1 - \dfrac{A \log (n) }{nx} \right)^{- 1/K_x -1} \exp \left( - \dfrac{ A \log(n+1) }{ xK_x}  \right) \\
    & \leq 2^{ 1/K_x + 1} (n+1)^{-\alpha - 2}.
\end{align*}
We know from \cite[Proposition 1.5.8]{bing:gold:teug:89} that $t \mapsto \mathbb{P} [X_1 \leq t ]$ is a regularly varying function of index $\alpha$ (see also Estimate \eqref{eq:mono_dens_equiv}), so that 
\begin{align*}
    (n+1) \left( \dfrac{x - \varepsilon_{n+1}}{x} \right)^{n/K_x -1}  \leq  2^{ 1/K_x + 1} (n+1)^{-\alpha - 1} \underset{n \to + \infty}{ = } o \left( \mathbb{P} \left[ X_{1} \leq  \frac{1}{n}   \right]  \right).
\end{align*}

To conclude this first step, recall that due to Theorems \ref{theo:: class c_5} and \ref{theo::lower_bound_continuous} that there exist two constants $0< c_x < C_x$ such that for $n$ sufficiently large, 
\begin{align*}
    c_x \mathbb{P} \left[ X_{1} \leq  \frac{1}{n}   \right] \leq  \frac{\mathbb{P} [S_{n+1} \leq x ] }{\mathbb{P} [S_n \leq x ] }  \leq C_x \mathbb{P} \left[ X_{1} \leq  \frac{1}{n}   \right].
\end{align*}
Hence
\begin{align*}
    \frac{\mathbb{P} [S_{n+1} \leq x ] }{\mathbb{P} [S_n \leq x ] } & = \frac{\mathbb{P} [S_{n+1} \leq x, X_i \leq \varepsilon_{n+1}, \forall i \in \{1, \ldots , n +1 \} ] }{\mathbb{P} [S_n \leq x ] } \\
    & +\dfrac{ \mathbb{P} [S_{n+1} \leq x, \exists i \in \{1, \ldots , n +1 \}, X_i > \varepsilon_{n+1} ] }{\mathbb{P} [S_n \leq x ] } \\
    & = \frac{\mathbb{P} [S_{n+1} \leq x, X_i \leq \varepsilon_{n+1}, \forall i \in \{1, \ldots , n +1 \} ] }{\mathbb{P} [S_n \leq x ] } + o \left( \mathbb{P} \left[ X_{1} \leq  \frac{1}{n}   \right]  \right) \\
    & =  \frac{\mathbb{P} [S_{n+1} \leq x, X_i \leq \varepsilon_{n+1}, \forall i \in \{1, \ldots , n +1 \} ] }{\mathbb{P} [S_n \leq x ] } + o \left( \frac{\mathbb{P} [S_{n+1} \leq x ] }{\mathbb{P} [S_n \leq x ] } \right).
\end{align*}
We deduce that
\begin{align*}
    \dfrac{\mathbb{P} [S_{n+1} \leq x, \exists i \in \{1, \ldots , n+1 \}, X_i > \varepsilon_{n+1} ] }{\mathbb{P} [S_n \leq x ] } \underset{n \to + \infty}{ = } o \left(  \frac{\mathbb{P} [S_{n+1} \leq x ] }{\mathbb{P} [S_n \leq x ] } \right).
\end{align*}
By multiplying both terms by $\mathbb{P} [S_n \leq x ] \neq 0$, we obtain the wanted asymptotic equivalent
\begin{align*}
    \mathbb{P} [S_{n+1} \leq x ] \underset{n \to + \infty}{\sim} \mathbb{P} [S_{n+1} \leq x, X_i \leq \varepsilon_{n+1}, \forall i \in \{1, \ldots , n+1 \} ].
\end{align*}
Using the same arguments of proof, one can consider $\varepsilon_n$ instead of $\varepsilon_{n+1}$ in the considered events to prove that
\begin{align*}
    \mathbb{P} [S_{n+1} \leq x ] \underset{n \to + \infty}{\sim} \mathbb{P} [S_{n+1} \leq x, X_i \leq \varepsilon_{n}, \forall i \in \{1, \ldots , n+1 \} ].
\end{align*}
The equivalence with a shift on $-1$ in the index of the sequence $(\varepsilon_n)_{n \geq N_1}$ will later simplify the arguments of proof. 

\medskip
\noindent \underline{\textbf{Step 2:}} 
Consider now $n \geq N_1$. We define the real numbers
\begin{align*}
    \beta_n^+ = \alpha - \sup_{t \in \left] 0 , \frac{x - \varepsilon_{n}}{x} \varepsilon_{n} \right] } \left( \left| \frac{tf'(t)}{f(t)} - (\alpha - 1)  \right| \right) \quad \quad \text{and} \quad \quad \beta_n^- = \alpha + \sup_{t \in \left] 0 , \frac{x - \varepsilon_{n}}{x} \varepsilon_{n} \right] } \left( \left| \frac{tf'(t)}{f(t)} - (\alpha - 1)  \right| \right).
\end{align*}
Proposition \ref{prop:slowly_varying} justified the limit $\frac{tf'(t)}{f(t)} \underset{t \to 0^+}{\to} \alpha - 1$. Since $ \frac{x - \varepsilon_{n}}{x} \varepsilon_{n}$ converges to zero at infinity, the sequences $(\beta_n^+)_{n \geq N_1}$ and $(\beta_n^-)_{n \geq N_1}$ are well-defined, with $\beta_n^+ \leq \alpha \leq \beta_n^-$, and converge to $\alpha$. The definition of those sequences moreover implies the monotony of both of them, since $\varepsilon_n \leq x/2$ for $n\geq N_1$. 

The definition of the sequences $(\beta_n^+)_{n \geq N_1}$ and $(\beta_n^-)_{n \geq N_1}$ does not exclude the possibility for those sequences to simultaneously value $\alpha$ from a certain index. Hence we distinguish two cases. 

\smallskip
\noindent \underline{\textit{Case 2.1.}} There exists a neighborhood of zero such that on this neighborhood the density function verifies:  $f(t) = B t^{\alpha -1}$ for some fixed positive real number $B$. This property is equivalent to the existence of some threshold $N_2 \geq N_1$ such that $\frac{tf'(t)}{f(t)} = \alpha -1$ for any $t \in \left]0, \frac{x}{x - \varepsilon_{N_2}} \varepsilon_{N_2} \right]$. In this case the sequences $(\beta_n^+)_{n \geq N_1}$ and $(\beta_n^-)_{n \geq N_1}$ become constant, equal to $\alpha$, up to this rank. 

Then we have for any $n \geq N_2$ and $t \in (0,\varepsilon_n)$
\begin{align*}
    & \mathbb{P} [S_n \leq x -t, X_i \leq \varepsilon_n, \forall i \in \{1, \ldots , n \} ] \\
    & \quad =  \int_{(\mathbb{R}_+)^n} \mathds{1}_{\{ x_1 + \ldots + x_n \leq x - t , x_i \leq \varepsilon_n, \forall i \in \{1,\ldots,n \} \} }  \prod_{i=1}^n f(x_i) dx_1 \ldots dx_n \\
    & \quad =  \int_{(\mathbb{R}_+)^n} \mathds{1}_{\{ x_1 + \ldots + x_n \leq x - t , x_i \leq \varepsilon_n, \forall i \in \{1,\ldots,n \} \} }  \prod_{i=1}^n B.x_i^{\alpha -1} dx_1 \ldots dx_n.
\end{align*}
The second equality is due to the formula of the density function on the subset $\left]0, \frac{x}{x - \varepsilon_{N_2}} \varepsilon_{N_2} \right]$ and the monotony of the sequence $(\varepsilon_n)_{n \geq 3}$.
Applying the change of variables $x_i = \dfrac{x-t}{x} y_i, i = 1 , \ldots , n$, we obtain 
\begin{align*}
    & \mathbb{P} [S_n \leq x - t, X_i \leq \varepsilon_n,  \forall i \in \{1, \ldots , n \} ] \\
    & \quad = \left( \dfrac{x-t}{x} \right)^n \int_{(\mathbb{R}_+)^n} \mathds{1}_{\{ y_1 + \ldots + y_n \leq x , y_i \leq \frac{x}{x-t}\varepsilon_n, \forall i \in \{1,\ldots,n \} \} }  \prod_{i=1}^n B . \left(\dfrac{x-t}{x} y_i \right)^{\alpha -1} dy_1 \ldots dy_n \\
    & \quad = \left( \dfrac{x-t}{x} \right)^{n +n(\alpha -1)} \int_{(\mathbb{R}_+)^n} \mathds{1}_{\{ y_1 + \ldots + y_n \leq x , y_i \leq \frac{x}{x-t}\varepsilon_n, \forall i \in \{1,\ldots,n \} \} }  \prod_{i=1}^n B.y_i^{\alpha -1} dy_1 \ldots dy_n \\
    & \quad = \left( \dfrac{x-t}{x} \right)^{n\alpha } \int_{(\mathbb{R}_+)^n} \mathds{1}_{\{ y_1 + \ldots + y_n \leq x , y_i \leq \frac{x}{x-t}\varepsilon_n, \forall i \in \{1,\ldots,n \} \} }  \prod_{i=1}^n f \left( y_i \right) dy_1 \ldots dy_n \\
    & \quad \leq \left( \dfrac{x-t}{x} \right)^{n\alpha } \int_{(\mathbb{R}_+)^n} \mathds{1}_{\{ y_1 + \ldots + y_n \leq x \} }  \prod_{i=1}^n f \left( y_i \right) dy_1 \ldots dy_n = \left( \dfrac{x-t}{x} \right)^{n\alpha } \mathbb{P} [S_{n} \leq x ].
\end{align*}
The same way, 
\begin{align*}
    & \mathbb{P} [S_n \leq x - t, X_i \leq \varepsilon_n,  \forall i \in \{1, \ldots , n \} ] \\
    &\quad = \left( \dfrac{x-t}{x} \right)^{n\alpha } \int_{(\mathbb{R}_+)^n} \mathds{1}_{\{ y_1 + \ldots + y_n \leq x , y_i \leq \frac{x}{x-t}\varepsilon_n, \forall i \in \{1,\ldots,n \} \} }  \prod_{i=1}^n f \left( y_i \right) dy_1 \ldots dy_n \\
    & \quad \geq \left( \dfrac{x-t}{x} \right)^{n\alpha } \int_{(\mathbb{R}_+)^n} \mathds{1}_{\{ y_1 + \ldots + y_n \leq x, y_i \leq \varepsilon_n, \forall i \in \{1,\ldots,n \} \} }  \prod_{i=1}^n f \left( y_i \right) dy_1 \ldots dy_n \\
    & \quad = \left( \dfrac{x-t}{x} \right)^{n\alpha } \mathbb{P} [S_n \leq x , X_i \leq \varepsilon_n, \forall i \in \{1, \ldots , n \} ].
\end{align*}
We deduce that Inequalities \eqref{eq:lestim_equivalence1} and \eqref{eq:uestim_equivalence1} hold:
\begin{align*}
    & \dfrac{\mathbb{P} [S_{n+1} \leq x, X_i \leq \varepsilon_{n}, \forall i \in \{1, \ldots , n +1 \} ]}{\mathbb{P} [S_n \leq x]} = \int_0^{\varepsilon_n} f(t) \dfrac{\mathbb{P} [S_n \leq x -t, X_i \leq \varepsilon_n, \forall i \in \{1, \ldots , n \} ]}{\mathbb{P} [S_n \leq x]} dt \\
    & \qquad \leq \int_0^{\varepsilon_n} f(t) \left( \dfrac{x-t}{x} \right)^{n\alpha } dt,
\end{align*}
and 
\begin{align*}
    & \dfrac{\mathbb{P} [S_{n+1} \leq x, X_i \leq \varepsilon_{n}, \forall i \in \{1, \ldots , n +1 \} ]}{\mathbb{P} [S_n \leq x, X_i \leq \varepsilon_n, \forall i \in \{1, \ldots , n \} ]} = \int_0^{\varepsilon_n} f(t) \dfrac{\mathbb{P} [S_n \leq x -t, X_i \leq \varepsilon_n, \forall i \in \{1, \ldots , n \} ]}{\mathbb{P} [S_n \leq x, X_i \leq \varepsilon_n, \forall i \in \{1, \ldots , n \} ]} dt  \\
    & \qquad \geq \int_0^{\varepsilon_n} f(t) \left( \dfrac{x-t}{x} \right)^{n\alpha } dt.
\end{align*}

\smallskip
\noindent \underline{\textit{Case 2.2.}} If $f$ is not equal to the power function $Bt^{\alpha-1}$, then, for any $n \geq N_1$ we have $\beta_n^+ < \alpha < \beta_n^-$. Define $N_2 = \min_{n \geq N_1} \{ \beta_k^+ \geq \alpha/2, \forall k \geq n  \}$. The integer $N_2$ is defined to ensure the positiveness of the sequences of exponent $(\beta_n^+)_{n \geq N_2}$ and $(\beta_n^-)_{n \geq N_2}$. Then Proposition \ref{prop:slowly_varying} and Corollary \ref{coro:: regular_var} justify that the functions $t \mapsto t^{1 -\beta_n^+}f(t) $ and $t \mapsto t^{1 -\beta_n^-}f(t) $ are respectively non-decreasing and non-increasing on $\left] 0,\dfrac{x}{x - \varepsilon_n} \varepsilon_n \right]$, for any $n \geq N_2$. From these analytical properties, we deduce that
\begin{align} \label{ineq:: regular_var}
    \forall t \in \left] 0,\dfrac{x}{x - \varepsilon_n} \varepsilon_n \right], \forall c \in ]0,1], \quad \quad  c^{\beta_n^- - 1} f(t) \leq f(ct) \leq c^{ \beta_n^+ - 1} f(t) .
\end{align}
Applying these inequalities and using the same arguments as in the case where $\beta_n^+ = \alpha = \beta_n^-$, we obtain for $n \geq N_2$ and $t \in (0,\varepsilon_n)$
$$
    \mathbb{P} [S_n \leq x - t, X_i \leq \varepsilon_n,  \forall i \in \{1, \ldots , n \} ] \geq \left( \dfrac{x-t}{x} \right)^{n  \beta_n^-} \mathbb{P} [S_n \leq x , X_i \leq \varepsilon_n, \forall i \in \{1, \ldots , n \} ]
$$
and
$$
    \mathbb{P} [S_n \leq x - t, X_i \leq \varepsilon_n,  \forall i \in \{1, \ldots , n \} ] \leq \left( \dfrac{x-t}{x} \right)^{n  \beta_n^+} \mathbb{P} [S_n \leq x  ].
$$
We then deduce the inequalities \eqref{eq:lestim_equivalence1} and \eqref{eq:uestim_equivalence1}. 

\medskip
\noindent \underline{\textbf{Step 3:}} We now prove \eqref{eq:equivalence2}. 
The result is obvious in the case 2.1. Thus we suppose here that the sequences $\beta_n^+$ and $\beta_n^-$ are never equal to $\alpha$. We consider the sequence of functions $(g_n)_{n \geq N_2}$, defined as
\begin{align*}
    g_n : \left[ \beta_{N_2}^+ , \beta_{N_2}^- \right] & \to \mathbb{R} \\
    \beta & \mapsto \int_0^{\varepsilon_{n}} f(t) \left( \dfrac{x-t}{x} \right)^{n \beta} dt.
\end{align*}
For any $n \geq N_2$, the function $g_n$ is a differentiable function with continuous derivative on the subset $\left] \beta_{N_2}^+ , \beta_{N_2}^- \right[$ with
\begin{align*}
    g_n'(\beta) = \int_0^{\varepsilon_{n}} f(t) n \log \left( \frac{x - t}{x} \right) \left( \dfrac{x-t}{x} \right)^{n \beta} dt \leq 0.
\end{align*}
Moreover for any $\beta \in \left] \beta_{N_2}^+ , \beta_{N_2}^- \right[$, we have
\begin{align*}
    \left| g_n'(\beta) \right| & = \int_0^{\varepsilon_{n}} f(t) n \log \left( \frac{x}{x - t} \right) \left( \dfrac{x-t}{x} \right)^{n \beta} dt  = \int_0^{\varepsilon_{n}} f(t) n \log \left( 1 + \frac{ t}{x-t} \right) \left( \dfrac{x-t}{x} \right)^{n \beta} dt \\
    & \leq \int_0^{\varepsilon_{n}} f(t) n  \frac{ t}{x-t}  \left( \dfrac{x-t}{x} \right)^{n \beta} dt  \leq \frac{ 1}{x-\varepsilon_{n}} \int_0^{\varepsilon_{n}} t f(t) n    \left( \dfrac{x-t}{x} \right)^{n \beta} dt \\
    & \leq \frac{ 2}{x} \int_0^{\varepsilon_{n}} t f(t) n    \left( \dfrac{x-t}{x} \right)^{n \beta} dt.
\end{align*}
The last inequality can be deduced since $n \geq N_2 \geq N_1$, where $N_1$ is defined to ensure that $\varepsilon_{n} \leq x/2$, for any $n \geq N_1$. 
Moreover, we recalled before Equivalence (\ref{eq:mono_dens_equiv}) that
\begin{equation*} 
    \mathbb{P} [X_1 \leq t] \underset{t \xrightarrow{} 0^+}{\sim} \frac{1}{\alpha} t f(t).
\end{equation*}
We consequently deduce the existence of a constant $M >1$ such that for any $t \in ]0, \varepsilon_{N_2}]$, $$tf(t) \leq M \alpha \mathbb{P} [X_1 \leq t]. $$ 
Therefore for any $\beta \in \left] \beta_{N_2}^+ , \beta_{N_2}^- \right[$
\begin{align*}
    \left| g_n'(\beta) \right| & \leq \frac{ 2}{x} \int_0^{\varepsilon_{n}} t f(t) n    \left( \dfrac{x-t}{x} \right)^{n \beta} dt  \leq \frac{ 2 M \alpha }{x} \int_0^{\varepsilon_{n}} \mathbb{P} [X_1 \leq t] n    \left( \dfrac{x-t}{x} \right)^{n \beta} dt \\
    & = -  2 M \alpha \frac{n}{n\beta +1} \left[ \mathbb{P} [X_1 \leq t] \left( \dfrac{x-t}{x} \right)^{n \beta +1} \right]_0^{\varepsilon_{n}} + 2 M \alpha \frac{n}{n\beta +1} \int_0^{\varepsilon_{n}} f(t)    \left( \dfrac{x-t}{x} \right)^{n \beta +1} dt \\
    & \leq 2 M \alpha \frac{n}{n\beta +1} \int_0^{\varepsilon_{n}} f(t)    \left( \dfrac{x-t}{x} \right)^{n \beta +1} dt  \leq 2 M \alpha \frac{1}{\beta } \int_0^{\varepsilon_{n}} f(t)    \left( \dfrac{x-t}{x} \right)^{n \beta } dt \\& 
    = \frac{2 M \alpha}{\beta } g_n(\beta) \leq \frac{2 M \alpha}{\beta_{N_2}^+ } g_n(\beta)
\end{align*}
Using the mean value theorem, we deduce that for $n > N_2$:
\begin{align*}
    \left| g_n(\beta_n^-)  - g_n(\alpha) \right| & \leq \left| \beta_n^-  - \alpha \right| \sup_{\beta \in \left[ \beta_n^-  , \alpha \right]} \left| g_n'(\beta) \right| \leq \left| \beta_n^-  - \alpha \right| \frac{2 M \alpha}{\beta_{N_2}^+ } \sup_{\beta \in \left[ \beta_n^-  , \alpha \right]} \left| g_n(\beta) \right|\\
    \left| g_n(\alpha)  - g_n(\beta_n^+) \right| & \leq \left|   \alpha - \beta_n^+ \right| \sup_{\beta \in \left[   \alpha ,\beta_n^+ \right]} \left| g_n'(\beta) \right|\leq \left|   \alpha - \beta_n^+ \right|  \frac{2 M \alpha}{\beta_{N_2}^+ } \sup_{\beta \in \left[   \alpha ,\beta_n^+ \right]} \left| g_n(\beta) \right|.
\end{align*}
For any $n \geq N_2$, the function $g_n$ is non-increasing, which implies that 
\begin{align*}
    \left| \frac{g_n(\beta_n^-)}{g_n(\alpha)}  - 1 \right| \leq \frac{2 M \alpha}{\beta_{N_2}^+ } \left| \beta_n^-  - \alpha \right| \\
    \left| \frac{g_n(\alpha)}{g_n(\beta_n^+)}  - 1  \right| \leq \frac{2 M \alpha}{\beta_{N_2}^+ } \left|   \alpha - \beta_n^+ \right|.
\end{align*}
Since $(\beta_n^-)_{n \geq 2}$ and $(\beta_n^+)_{n \geq 2}$ are two sequences that converge to $\alpha$, we obtain \eqref{eq:equivalence2}. 

\medskip 
\noindent \underline{ \textbf{Step 4:}}
Applying Lemma \ref{lem: equivalence_inequ}, the two inequalities \eqref{eq:lestim_equivalence1} and \eqref{eq:uestim_equivalence1} obtained in {\bf Step 2}, the previous equivalence \eqref{eq:equivalence2}, combined to the results of \textbf{Step 1} justify the wanted asymptotic equivalence:
\begin{align*}
    \dfrac{\mathbb{P} [S_{n+1} \leq x ]}{\mathbb{P} [S_n \leq x ]}   \underset{n \to + \infty}{\sim} \int_0^{\varepsilon_n} f(t) \left( \dfrac{x-t}{x} \right)^{n\alpha } dt.
\end{align*}
Consider a function $l$ that diverges to the infinity at infinity and which verifies $0\leq \frac{l(n)}{n} \leq \varepsilon_n$, at least for $n$ large. It is the case if $l(t) \leq 2x \log(t)$ or if $l(t) \underset{t\to \infty}{=} o(\log(t))$. 
We have
\begin{align*}
    \int_0^{\varepsilon_n} f(t) \left( \dfrac{x-t}{x} \right)^{n \beta} dt =  \int_0^{\frac{l(n)}{n}} f(t) \left( \dfrac{x-t}{x} \right)^{n \alpha} dt + \int_{\frac{l(n)}{n}}^{\varepsilon_n} f(t) \left( \dfrac{x-t}{x} \right)^{n \alpha} dt.
\end{align*}
We prove that 
\begin{align*}
    \int_{\frac{l(n)}{n}}^{\varepsilon_n} f(t) \left( \dfrac{x-t}{x} \right)^{n \alpha} dt \underset{n \to + \infty}{ = } o \left( \int_0^{\varepsilon_n} f(t) \left( \dfrac{x-t}{x} \right)^{n \alpha} dt \right).
\end{align*}
To justify such asymptotic comparison, we rather prove that
\begin{align*}
    \int_{\frac{l(n)}{n}}^{\varepsilon_n} f(t) \left( \dfrac{x-t}{x} \right)^{n \alpha} dt \underset{n \to + \infty}{ = } o \left( \mathbb{P} \left[ X_1 \leq \frac{1}{n} \right] \right),
\end{align*}
since Theorems \ref{theo:: class c_5} and \ref{theo::lower_bound_continuous} justify that 
\begin{align*}
    \mathbb{P} \left[ X_1 \leq \frac{1}{n} \right] \underset{n \to + \infty}{ = } O \left( \dfrac{\mathbb{P} [ S_{n+1} \leq x] }{\mathbb{P} [ S_{n} \leq x]} \right)  \underset{n \to + \infty}{ = } O \left( \int_0^{\varepsilon_n} f(t) \left( \dfrac{x-t}{x} \right)^{n \alpha} dt \right).
\end{align*}
We shall consider the proof separately in the case where the density function admits a finite or infinite limit at zero, following the same distinction as in the proof of Theorem \ref{theo:: class c_5} (Step 3). 

First assume that \underline{$\lim_{t \to 0^+} f(t) = + \infty$}. It has been justified in the proof of Theorem \ref{theo:: class c_5} (Step 3) that in such configuration, there exists $\eta > 0$ such that $f$ is non-increasing on the subset $]0,\eta[$. Hence for $n$ sufficiently large such that $l(n)/n \leq \varepsilon_n \leq \eta$, $ ||f||_{\infty , [l(n)/n;\varepsilon_n]} \leq f(l(n)/n)$. We consequently deduce 
\begin{align*}
    & \int_{\frac{l(n)}{n}}^{\varepsilon_n} f(t) \left( \dfrac{x-t}{x} \right)^{n \alpha} dt  \leq ||f||_{\infty , [l(n)/n;\varepsilon_n]} \int_{\frac{l(n)}{n}}^{\varepsilon_n} \left( \dfrac{x-t}{x} \right)^{n \alpha} dt  \leq 
    f \left( \frac{l(n)}{n} \right) 
    \int_{\frac{l(n)}{n}}^{\varepsilon_n} \left( \dfrac{x-t}{x} \right)^{n \alpha} dt \\
    & \qquad \leq 
    f \left( \frac{l(n)}{n} \right) 
    \frac{1}{n \alpha + 1} \left( 1 - \frac{l(n)}{n} \right)^{n \alpha + 1}  \leq 
    f \left( \frac{l(n)}{n} \right) 
    \frac{1}{n \alpha + 1} \exp \left( - \frac{\alpha l(n)}{x} \right).
\end{align*}
Again since $f$ is non-increasing on $]0,\eta[$ and since $l(n) \geq 1$ (at least to $n$ large enough), $f(l(n)/n)\leq f(1/n)$ for large values of $n$,  and thus
\begin{align*}
    \int_{\frac{l(n)}{n}}^{\varepsilon_n} f(t) \left( \dfrac{x-t}{x} \right)^{n \alpha} dt & \leq f \left( \frac{1}{n} \right)  \frac{1}{n \alpha + 1} 
    \exp \left( - \frac{\alpha l(n)}{x} \right).
\end{align*}
Recall that the asymptotic equivalence (\ref{eq:mono_dens_equiv}) justifies that
$$
    f \left( \frac{1}{n} \right)  \frac{1}{n \alpha + 1} \underset{n \to + \infty}{=} O \left( \mathbb{P} \left[ X_1 \leq \frac{1}{n} \right] \right).
$$
Since $\lim_{n\to +\infty} l(n) = +\infty$, we deduce that 
$$
    \int_{\frac{l(n)}{n}}^{\varepsilon_n} f(t) \left( \dfrac{x-t}{x} \right)^{n \alpha} dt  \underset{n \to + \infty}{ = } o \left( \mathbb{P} \left[ X_1 \leq \frac{1}{n} \right] \right)
$$
and therefore 
$$
    \int_0^{\frac{l(n)}{n}} f(t) \left( \dfrac{x-t}{x} \right)^{n \alpha} dt   \underset{n \to + \infty}{\sim} \int_0^{\varepsilon_n} f(t) \left( \dfrac{x-t}{x} \right)^{n \alpha} dt.  
$$

Now assume that \underline{$f$ admits a finite positive limit at zero}, that is $\alpha = 1$. Then $f$ is bounded on a neighborhood on zero and Equivalence (\ref{eq:mono_dens_equiv}) implies that 
$$
      \frac{1}{n \alpha + 1} \underset{n \to + \infty}{=} O \left( \mathbb{P} \left[ X_1 \leq \frac{1}{n} \right] \right).
$$
We argue as in the previous case to deduce the wanted result.

Finally suppose that \underline{the limit of $f$ at zero is zero}. To prove the wanted asymptotic comparison in such configuration, we propose to use similar arguments as in the proof of Theorem \ref{theo:: class c_5}. 

Consider $\delta$ a positive real number such that $\delta > \alpha -1$. Since $f'$ is supposed to be ultimately monotone in a neighborhood of $0$ and due to Property \eqref{rem:limit_beha_slowly_var} and Proposition \ref{prop:slowly_varying}, we deduce that $t \mapsto  f(t) t^{- \delta}$ diverges to the infinity at zero and is a non-increasing function in a certain neighborhood of the origin. Hence for $n$ sufficiently large:
\begin{align*}
    & \int_{l(n)/n}^{\varepsilon_n}  f(t) \left( \frac{x-t}{x} \right)^{ n \alpha  } dt  =  \int_{1/n}^{\varepsilon_n}  f(t) t^{- \delta} t^{ \delta}  \left( \frac{x-t}{x} \right)^{ n \alpha  } dt \\
    & \leq  f \left( \frac{l(n)}{n} \right) \left( \frac{n}{l(n)} \right)^{\delta} \int_{l(n)/n}^{\varepsilon_n} t^{ \delta}  \left( \frac{x-t}{x} \right)^{ n \alpha  } dt \\
    &\qquad  \leq  f \left( \frac{l(n)}{n} \right) \left( \frac{n}{l(n)} \right)^{\delta} \int_{l(n)/n}^{x} t^{ \delta}  \left( \frac{x-t}{x} \right)^{ n \alpha  } dt.
\end{align*}
W.l.o.g. we can suppose that $\delta$ is not an integer. It is now possible to apply Formula (\ref{induc:: integration_formula}) by replacing the lower bound of the interval of integration $1/n$ by $l(n)/n$, to obtain that:
\begin{align*}
    &\int_{l(n)/n}^{x} t^{ \delta}  \left(  \frac{x-t}{x} \right)^{  n \alpha  } dt\\ 
    =
    & \sum_{j=0}^{\lfloor \delta \rfloor} \left( \prod_{i=0}^j (\delta - i) \left(  n \alpha + i + 1 \right)^{-1} \right) \frac{x^{j+1}}{\delta - j} \left( \frac{l(n)}{n} \right)^{\delta - j} \left( 1 - \frac{l(n)}{nx}  \right)^{ n \alpha + j + 1} \\ 
    & +  \left( \prod_{i=0}^{\lfloor \delta \rfloor} (\delta - i) \left(  n \alpha + i + 1 \right)^{-1} \right) x^{\lfloor \delta \rfloor +1} \int_{l(n)/n}^{x} t^{ \delta - \lfloor \delta \rfloor - 1} \left(  \frac{x-t}{x} \right)^{  n \alpha  + \lfloor \delta \rfloor + 1  } dt .
\end{align*}    
The first term of the right-hand side can be bounded by:
\begin{align*}
& \exp \left( - \frac{\alpha l(n)}{x} \right) l(n)^{\delta +1} \sum_{j=0}^{\lfloor \delta \rfloor} \left( \prod_{i=0}^j (\delta - i) \left(  n \alpha + i + 1 \right)^{-1} \right) \frac{x^{j+1}}{\delta - j} \left( \frac{1}{n} \right)^{\delta - j} \\
& \qquad \leq \exp \left( - \frac{\alpha l(n)}{x} \right) \left( \dfrac{l(n)}{n} \right)^{\delta+1}\sum_{j=0}^{\lfloor \delta \rfloor} \left( \prod_{i=0}^j (\delta - i) \left(  \alpha + \frac{i + 1}{n} \right)^{-1} \right) \frac{x^{j+1}}{\delta - j} .
\end{align*}
The second term is bounded from above by:
\begin{align*}
   & \left( \prod_{i=0}^{\lfloor \delta \rfloor} (\delta - i) \left(  n \alpha + i + 1 \right)^{-1} \right) x^{\lfloor \delta \rfloor +1} \left( \frac{l(n)}{n} \right)^{\delta - \lfloor \delta \rfloor - 1} \int_{l(n)/n}^{x} \left(  \frac{x-t}{x} \right)^{  n \alpha  + \lfloor \delta \rfloor + 1  } dt \\
    & \quad = l(n)^{\delta - \lfloor \delta \rfloor - 1} \left( \prod_{i=0}^{\lfloor \delta \rfloor} (\delta - i) \left(  n \alpha + i + 1 \right)^{-1} \right) \left( \frac{1}{n} \right)^{\delta - \lfloor \delta \rfloor - 1} \frac{x^{\lfloor \delta \rfloor +2}}{n \alpha  + \lfloor \delta \rfloor + 2} \left( 1 - \frac{l(n)}{nx}  \right)^{ n \alpha  + \lfloor \delta \rfloor + 2} \\
    & \quad \leq \exp \left( - \frac{\alpha l(n)}{x} \right) l(n)^{\delta +1 } \left( \prod_{i=0}^{\lfloor \delta \rfloor} (\delta - i) \left(  \alpha + \frac{i + 1}{n} \right)^{-1} \right) \left( \frac{1}{n} \right)^{\delta} \frac{x^{\lfloor \delta \rfloor +2}}{n \alpha  + \lfloor \delta \rfloor + 2}. 
 \end{align*}  
Hence we deduce that 
$$
  \int_{l(n)/n}^{x} t^{ \delta}  \left(  \frac{x-t}{x} \right)^{  n \alpha  } dt \underset{n \to + \infty}{=}  O \left(  \exp \left( - \frac{\alpha l(n)}{x} \right) \left( \frac{l(n)}{n}  \right)^{\delta +1} \right),
$$
thus
\begin{align*}
    \int_{\frac{l(n)}{n}}^{\varepsilon_n} f(t) \left( \dfrac{x-t}{x} \right)^{n \alpha} dt & \underset{n \to + \infty}{=}  O \left(  \exp \left( - \frac{\alpha l(n)}{x} \right) f \left( \frac{l(n)}{n}  \right) \left( \frac{l(n)}{n}  \right) \right).
\end{align*}
Since $t \mapsto  f(t) t^{- \delta}$ is a non-increasing function in a certain neighborhood of the origin, for $n$ sufficiently large, we have
$$
    \left( \frac{l(n)}{n}  \right)^{- \delta}  f \left( \frac{l(n)}{n}  \right) \leq \left( \frac{1}{n}  \right)^{- \delta}  f \left( \frac{1}{n}  \right) ,
$$
therefore 
$$ \frac{l(n)}{n} f \left( \frac{l(n)}{n}  \right)  \leq  l(n)^{ \delta +1} \left( \frac{1}{n}  \right) f  \left( \frac{1}{n}  \right).
$$
This ends the proof of the asymptotic comparison since
\begin{align*}
    \int_{\frac{l(n)}{n}}^{\varepsilon_n} f(t) \left( \dfrac{x-t}{x} \right)^{n \alpha} dt & \underset{n \to + \infty}{=}  O \left(  \exp \left( - \frac{\alpha l(n)}{x} \right) l(n)^{\delta +1} f \left( \frac{1}{n}  \right) \left( \frac{1}{n}  \right) \right) \\
    & \underset{n \to + \infty}{=}  O \left(  \exp \left( - \frac{\alpha l(n)}{x} \right) l(n)^{\delta +1} \mathbb{P} \left[ X_1 \leq \frac{1}{n} \right] \right) \\
    & \underset{n \to + \infty}{=}  o \left( \mathbb{P} \left[ X_1 \leq \frac{1}{n} \right] \right).
\end{align*}

\end{proof}

\subsection{Proofs for Section \ref{sec:: LDT}}

Let's start with the proof of Theorem \ref{ratio_lim_ldp}.

\begin{proof}
To prove this theorem, we reuse the arguments of proof of Cramér's article \cite{Cra38}. We re-introduce the notations from its article. For $h \in ]-a_2, a_1 [$, let $F_n$ and $\overline{F}_{n,h}$ be respectively the distribution functions of the variables
\begin{align*}
    \frac{Y_1 + \ldots + Y_n }{\sigma \sqrt{n} } \quad \quad \text{and} \quad \quad \frac{\overline{Z}_1^h + \ldots + \overline{Z}_n^h - \overline{m}(h) n }{\overline{\sigma}(h) \sqrt{n} }.
\end{align*}
To begin with, $\mathbb{P} \left[ S_n \leq 0  \right] = \mathbb{P} \left[ \frac{Y_1 + \ldots + Y_n }{\sigma \sqrt{n} } \geq \frac{\sqrt{n} \mathbb{E} [X_1]}{ \sigma}  \right] = 1 - F_n \left( \frac{\sqrt{n} \mathbb{E} [X_1]}{ \sigma}  \right)$ and $\mathbb{P} \left[ S_n \leq x  \right] = \mathbb{P} \left[ \frac{Y_1 + \ldots + Y_n }{\sigma \sqrt{n} } \geq \frac{\sqrt{n} \mathbb{E} [X_1]}{ \sigma} - \frac{x}{\sigma \sqrt{n}} \right] = 1 - F_n \left( \frac{\sqrt{n} \mathbb{E} [X_1]}{ \sigma}  - \frac{x}{\sigma \sqrt{n}} \right)$. From \cite[Formula (12b)]{Cra22}, we deduce that
\begin{align*}
    1 - F_n (t) = e^{-(h\overline{m}(h) - \log (R(h)))n} \int_{\frac{\sigma t - \overline{m}(h) \sqrt{n} }{\overline{\sigma} (h) }}^{+ \infty} e^{-h \overline{\sigma} (h) \sqrt{n} y} d\overline{F}_{n,h} (y).
\end{align*}
When considering $t = \frac{\overline{m}(h) \sqrt{n} }{\sigma}$, with $h \in ]-a_2, a_1 [$, we obtain \cite[Formula (21)]{Cra22}, i.e.
\begin{align*}
    1 - F_n \left( \frac{\overline{m}(h) \sqrt{n} }{\sigma} \right) = e^{-(h\overline{m}(h) - \log (R(h)))n} \int_{0}^{+ \infty} e^{-h \overline{\sigma} (h) \sqrt{n} y} d\overline{F}_{n,h} (y),
\end{align*}
where $\overline{F}_{n,h} $ can be decomposed as $\overline{F}_{n,h} = \Phi + Q_{n,h}$, with $\Phi$ the distribution function of the standard normal distribution and 
\begin{align*}
    Q_{n,h} (y) = \frac{P_{2,h} (y) }{\sqrt{n}} e^{- \frac{y^2}{2}} + M_{n,h} (y).
\end{align*}
As explained in \cite[pages 10 and 16]{Cra22} and \cite[Chapter 7]{Cra70}, $P_{2,h}$ is a polynomial function of degree 2, with his coefficients expressible as continuous functions in $h$ of the moments of the variable $Z_{1,h}$ and  $\sup_{y \in \mathbb{R}} | M_{n,h} (y) | \leq \frac{g(h)}{n}$, where g is also expressible as continuous functions in $h$ of the moments of the variable $Z_{1,h}$. Let us emphasize that the second property of Assumption \ref{ass:cramer_theo} is used here. To obtain more details on this assertions, one can find them in \cite[Chapter 7]{Cra70}. The continuity in $h$ of the moment functions of the variables $(Z_{1,h})_{h \in ]-a_2,a_1[}$ is discussed in  \cite[page 9]{Cra22}.

In order to obtain asymptotic equivalents of the sequences $\left( 1 - F_n \left( \frac{\sqrt{n} \mathbb{E} [X_1]}{ \sigma}  \right) \right)_{n \in \mathbb{N}^*}$ and \\
$\left( 1 - F_n \left( \frac{\sqrt{n} \mathbb{E} [X_1]}{ \sigma} - \frac{x}{\sigma \sqrt{n}} \right) \right)_{n \in \mathbb{N}^*}$, we define $h_\infty$ and $h_n$, for $n > x/\mathbb{E} [X_1]$, the positive real numbers that uniquely solve the equations 
\begin{align*}
    1) & \quad \overline{m}(h_\infty) = \mathbb{E} [X_1] \\
    2) & \quad \frac{\sqrt{n} \mathbb{E} [X_1]}{ \sigma} - \frac{x}{\sigma \sqrt{n}} = \frac{\overline{m}(h_n) \sqrt{n}  }{\sigma} \quad \iff \quad  \overline{m}(h_n) = \mathbb{E} [X_1] - \frac{x}{n}. 
\end{align*}
Existence of these numbers is ensured by the last hypothesis of Assumption \ref{ass:cramer_theo} and since $\overline{m}$ is a continuous function. Moreover on $]-a_2,a_1[$, the derivative of $\overline{m}$ exists and is equal to $\overline{\sigma}^2 (h) > 0$ (from our condition on $X_1$, $\overline{Z}_1^h$ cannot be degenerate). Hence $\overline{m}$ is an increasing function. Since $\overline{m}(0) = 0$ and $\mathbb{E} [X_1] >0$, we deduce that $h_\infty$ and $h_n$ are unique and positive. The continuity of $\overline{m}$ also implies that $\displaystyle \lim_{n \to + \infty} h_n = h_\infty$. Finally since $\overline{m}$ is of class $C^1$ with positive derivative, we have: $(h_n - h_\infty)  \underset{n \to + \infty}{=} O \left( \dfrac{1}{n} \right)$.

Remark that the set $\displaystyle H = \{ h_\infty \} \cup \left( \bigcup_{n > x/\mathbb E(X_1)} \{ h_n \} \right)$ is a compact subset and any moment function of the variables $(Z_{1,h})_{h \in ]-a_2,a_1[}$ is continuous, it is also the case for their respective image of $H$. Consequently, there consequently exists a positive constant $M$, independent of $h$, such that for any $h \in H$, $\sup_{y \in \mathbb{R}} | M_{n,h}(y) | \leq \frac{C M}{ n}$. 

In order to simplify the notations of the forthcoming formulas, we define
\begin{align*}
    \alpha = & h_\infty \overline{m} ( h_\infty) - \log ( R (h_\infty ) ) \\
    \alpha_n = & h_n \overline{m} ( h_n) - \log ( R (h_n ) ). 
\end{align*}
We have
\begin{align*}
    1 - F_n \left( \frac{\sqrt{n} \mathbb{E} [X_1]}{ \sigma}  \right) = e^{- \alpha n} \int_{0}^{+ \infty} e^{-h_\infty \overline{\sigma} (h_\infty) \sqrt{n} y} d\overline{F}_{n,h_\infty} (y).
\end{align*}
Finally using the decomposition of $\overline{F}_{n,h_\infty}$ and applying an integration by parts\footnote{The use of Lebesgue–Stieltjes type integration by parts is justified, since one of the functions is continuous and the other is of bounded variation.} with the term $M_{n,h_\infty}$, we have
\begin{align*}
    \int_{0}^{+ \infty} e^{-h_\infty \overline{\sigma} (h_\infty) \sqrt{n} y} d\overline{F}_{n,h} (y) = &  \int_{0}^{+ \infty}  \frac{e^{-h_\infty \overline{\sigma} (h_\infty) \sqrt{n} y - \frac{y^2}{2}}}{\sqrt{2 \pi}} dy \\
    & + \frac{1}{\sqrt{n}}\int_{0}^{+ \infty} e^{-h_\infty \overline{\sigma} (h_\infty) \sqrt{n} y - \frac{y^2}{2}} ( P_{2,h_\infty}' (y) - y P_{2,h_\infty} (y)) dy \\
    & - M_{n,h_\infty} (0) + h_\infty \overline{\sigma} (h_\infty) \sqrt{n} \int_{0}^{+ \infty} e^{-h_\infty \overline{\sigma} (h_\infty) \sqrt{n} y} M_{n,h_\infty} (y)  dy.
\end{align*}
From the proof arguments given in \cite[page 16]{Cra22}, one naturally obtains
\begin{align*}
    \int_{0}^{+ \infty}  \frac{e^{-h_\infty \overline{\sigma} (h_\infty) \sqrt{n} y - \frac{y^2}{2}}}{\sqrt{2 \pi}} dy \underset{n \to + \infty}{ \sim} \frac{1}{h_\infty \overline{\sigma} (h_\infty) \sqrt{2 \pi n}}.
\end{align*}
Finally, since $\sup_{y \in \mathbb{R}} | M_{n,h_\infty} (y) | \leq \frac{g(h_\infty)}{n}$ and $y \mapsto e^{- \frac{y^2}{2}} ( P_{2,h_\infty}' (y) - y P_{2,h_\infty} (y))$ is a bounded function on $\mathbb{R}$,
\begin{align*}
    \frac{1}{\sqrt{n}}\int_{0}^{+ \infty} & e^{-h_\infty \overline{\sigma} (h_\infty) \sqrt{n} y - \frac{y^2}{2}} ( P_{2,h_\infty}' (y) -  y P_{2,h_\infty} (y)) dy 
    - M_{n,h_\infty} (0) \\ 
    & +h_\infty \overline{\sigma} (h_\infty) \sqrt{n} \int_{0}^{+ \infty} e^{-h_\infty \overline{\sigma} (h_\infty) \sqrt{n} y} M_{n,h_\infty} (y)  dy = \underset{n \to + \infty}{O} \left( \frac{1}{n} \right).
\end{align*}
This justifies that
\begin{align*}
    \mathbb{P} \left[ S_n \leq 0  \right] \underset{n \to + \infty}{\sim} \frac{e^{-\alpha n}}{h_\infty \overline{\sigma}(h_\infty ) \sqrt{2 \pi n}}.
\end{align*}
Similarly, we have
\begin{align*}
    \left( 1 - F_n \left( \frac{\sqrt{n} \mathbb{E} [X_1]}{ \sigma} - \frac{x}{\sigma \sqrt{n}} \right) \right)_{n \in \mathbb{N}^*} = e^{- \alpha_n n} \int_{0}^{+ \infty} e^{-h_n \overline{\sigma} (h_n) \sqrt{n} y} d\overline{F}_{n,h_n} (y),
\end{align*}
and
\begin{align*}
    \int_{0}^{+ \infty} e^{-h_n \overline{\sigma} (h_n) \sqrt{n} y} d\overline{F}_{n,h_n} (y) = &  \int_{0}^{+ \infty}  \frac{e^{-h_n \overline{\sigma} (h_n) \sqrt{n} y - \frac{y^2}{2}}}{\sqrt{2 \pi}} dy \\
    & + \frac{1}{\sqrt{n}}\int_{0}^{+ \infty} e^{-h_n \overline{\sigma} (h_n) \sqrt{n} y - \frac{y^2}{2}} ( P_{2,h_n}' (y) - y P_{2,h_n} (y)) dy \\
    & - M_{n,h_n} (0) + h_n \overline{\sigma} (h_n)  \sqrt{n} \int_{0}^{+ \infty} e^{-h_n \overline{\sigma} (h_n) \sqrt{n} y} M_{n,h_n} (y)  dy.
\end{align*}
The first term is treated the same way as before and we have
\begin{align*}
    \int_{0}^{+ \infty}  \frac{e^{-h_n \overline{\sigma} (h_n) \sqrt{n} y - \frac{y^2}{2}}}{\sqrt{2 \pi}} dy \underset{n \to + \infty}{ \sim} \frac{1}{h_n \overline{\sigma} (h_n) \sqrt{2 \pi n}} \underset{n \to + \infty}{ \sim} \frac{1}{h_\infty \overline{\sigma} (h_\infty) \sqrt{2 \pi n}},
\end{align*}
where the last equivalent is deduced from the continuity of $\overline{\sigma}$ and consequently, $h_n \overline{\sigma} (h_n) \to h_\infty \overline{\sigma} (h_\infty) > 0$. \\
To study the other terms, recall that the coefficients of the polynomial $P_{2,h}$ are continuous functions in $h$ and $H$ is a compact subset. Due to these properties, it is possible to uniformly bound the polynomial $y \to | P_{2,h_n}' (y) - y P_{2,h_n} (y) |$ by another polynomial of degree 3, independently of $h_n \in H$. The previously provided inequality  $\sup_{y \in \mathbb{R}} | M_{n,h_n}(y) | \leq \frac{C M}{ n}$ is also independent of $h_n \in H$. Moreover since the image of $H$ by the function $h \mapsto h \overline{\sigma} (h)$ is a compact subset contained in $]0, + \infty[$, it admits a positive minimum in $H$. Finally the same arguments justify that 
\begin{align*}
    \frac{1}{\sqrt{n}}\int_{0}^{+ \infty}  & e^{-h_n \overline{\sigma} (h_n) \sqrt{n} y - \frac{y^2}{2}} ( P_{2,h_\infty}' (y) -  y P_{2,h_n} (y)) dy 
    - M_{n,h_n} (0) \\ 
    & +  h_n \overline{\sigma} (h_n)  \sqrt{n} \int_{0}^{+ \infty} e^{-h_n \overline{\sigma} (h_n) \sqrt{n} y} M_{n,h_n} (y)  dy = \underset{n \to + \infty}{O} \left( \frac{1}{n} \right).
\end{align*}
Therefore 
\begin{align*}
    \mathbb{P} \left[ S_n \leq x  \right] \underset{n \to + \infty}{\sim} \frac{e^{-\alpha_n n}}{h_\infty \overline{\sigma}(h_\infty ) \sqrt{2 \pi n}} = \frac{e^{-\alpha n} e^{-(\alpha_n - \alpha)n }}{h_\infty \overline{\sigma}(h_\infty ) \sqrt{2 \pi n}}.
\end{align*}
To conclude, it is possible to prove that $\lim_{n\to +\infty} e^{-(\alpha_n - \alpha)n } = e^{h_\infty x}$. Indeed we have
\begin{align*}
    \alpha_n - \alpha & = h_n \overline{m} (h_n) - \log ( R (h_n ) ) - ( h_\infty \overline{m} (h_\infty) - \log ( R (h_\infty ) ) \\
    & = h_n \overline{m} (h_n) - h_\infty \overline{m} (h_\infty) + \log ( R (h_\infty ) ) - \log ( R (h_n ) ) \\
    & = h_n \overline{m} (h_n) - h_\infty \overline{m} (h_\infty) + (h_\infty - h_n) \overline{m} (h_n) + (h_\infty - h_n)^2 \overline{\sigma} (h_n)^2 + o((h_n - h_\infty)^2) \big) \\
    & =  h_\infty ( \overline{m} (h_n) - \overline{m} (h_\infty) ) - (h_n - h_\infty)^2 \overline{\sigma} (h_n)^2 + o((h_n - h_\infty)^2) \\
    & = - h_\infty \frac{x}{n} - (h_n - h_\infty)^2 \overline{\sigma} (h_n) + o((h_n - h_\infty)^2).
\end{align*}
Recall that $(h_n - h_\infty)  \underset{n \to + \infty}{=} O \left( \frac{1}{n} \right)$. We obtain that the limit of $ -(\alpha_n - \alpha)n $ is equal to $ h_\infty x$, when $n$ goes to $+\infty$. This achieves the proof of this theorem. 
\end{proof}

\paragraph{Proof of Lemma \ref{lem:Condition_C_true}}
\begin{proof}
Assume that $X_1$ is bounded from below by a negative constant $b$. Then for any $h<0$,
    $$R_{X_1}(h) = \int_{\mathbb R} e^{hy} dF(y) \leq \mathbb P(X_1 \geq 0) + \int_b^0 e^{hy} dF(y) \leq \mathbb P(X_1 \geq 0) + e^{bh} \mathbb P(X_1 \leq 0) <+\infty.$$
Thus $a_1 = +\infty$. Now we choose $b< 0$ such that $\int_{[b,3b/4)} dF(y) = \mathbb P(b \leq X < 3b/4) > 0$. For all $h<0$, 
$$0\leq \int_0^\infty ye^{hy} dF(y) \leq \int_0^\infty y dF(y) < +\infty.$$
Hence when $h$ tends to $-\infty$:
$$\int_0^\infty ye^{hy} dF(y)= o \left( \int_b^0 ye^{hy}dF(y) \right),\qquad \int_0^\infty e^{hy} dF(y)= o \left(  \int_b^0 e^{hy}dF(y) \right).$$
For $b/2\leq y \leq 0$, since $h<0$, $1\leq e^{hy} \leq e^{hb/2}$. Thus
$$ \int_{[b/2,0)} e^{hy}dF(y) \leq e^{hb/2} \mathbb P(b/2 \leq X_1 < 0),$$
and 
$$\int_{[b,b/2)} e^{hy}dF(y)  \geq \int_{[b,3b/4)}e^{hy}dF(y) \geq e^{3bh/4} \mathbb P(b \leq X_1 < 3b/4).$$
Hence we deduce that:
$$
\overline{m}(h)  \leq \dfrac{ \int_{[b,b/2)} ye^{hy} dF(y) + \int_{[0,+\infty)} ye^{hy} dF(y)}{R(h)} \leq \dfrac{ \frac{b}{2}\int_{[b,b/2)} e^{hy}dF(y) + \int_{[0,+\infty)} ye^{hy}dF(y) }{ \int_{[b,b/2)} e^{hy} dF(y)  + \int_{[b/2,+\infty)} e^{hy} dF(y)}.
$$
Therefore 
$$\lim_{h \downarrow -a_1} \overline{m}_{X_1}(h) \leq \dfrac{b}{2} < 0.$$

If for any $b<0$, $\mathbb{P} [X_1 \leq b] > 0$ and $a_1 = + \infty$, then we repeat the above arguments and we obtain that $\lim_{h \downarrow -a_1} \overline{m}_{X_1}(h)$ is smaller than any $b/2$. Hence this limit should be $-\infty$. 
\end{proof}

\paragraph{Proof of Lemma \ref{lem:: large_dev_tech}}
\begin{proof}
Let $C>1$.
Using the application of Chebycheﬀ’s inequality from the proof in Cramér's Theorem (see \cite[Theorem 2.2.3]{Dem09}), we have for any $\lambda \leq 0$
\begin{align*}
    \mathbb{P} \left[ S_n \leq x  \right]  = \mathbb{P} \left[ M_n \leq \frac{x}{n}  \right] & \leq e^{-n \Lambda^*\left( \frac{x}{n} \right)}.
\end{align*}
The purpose is now to determinate the value of $\Lambda^*\left( \frac{x}{n}\right)$ or at least an asymptotic equivalent. We recall that $\Lambda^*\left( \frac{x}{n} \right)  = \underset{\lambda \in \mathbb{R}}{\sup} \left\{ \lambda \frac{x}{n} - \log \mathbb{E} \left[ e^{\lambda X_1 } \right] \right\} =  \underset{\lambda \leq 0}{\sup} \left\{ \lambda \frac{x}{n} - \log \mathbb{E} \left[ e^{\lambda X_1 } \right] \right\}$, where the last equality holds due to \cite[Lemma 2.2.5]{Dem09}, since for $n$ sufficiently large: $\frac{x}{n} \leq \mathbb E (X_1)$. Define $ g_n : \lambda \in ]- \infty, 0] \mapsto \lambda \frac{x}{n} - \log \mathbb{E} \left[ e^{\lambda X_1 } \right]$. We have
\begin{align*}
    g_n (\lambda) = \lambda \frac{x}{n} - \log \left( \sum_{a \in X_1(\Omega)} e^{\lambda a } \mathbb{P} [X_1 = a] \right) = \lambda \frac{x}{n} - \log \left( \mathbb{P}[X_1=0] + \sum_{a \in X_1(\Omega) \backslash \{0\}} e^{\lambda a } \mathbb{P} [X_1 = a] \right)
\end{align*}
And since $(X_n)_{n \in \mathbb{N}^*}$ belongs to class $\mathcal{C}_2$, $x_{\min} = \inf \{ a \in  X_1(\Omega) \backslash \{0\}\} >0$. We consequently deduce that for any $\lambda \leq 0$
\begin{align*}
    g_n (\lambda) & \geq \lambda \frac{x}{n} - \log \mathbb{P}[X_1=0] - \log \left( 1 + e^{\lambda x_{\min}} \frac{\mathbb{P}[X_1>0]}{\mathbb{P}[X_1=0]} \right).
\end{align*}
Therefore 
$$ \Lambda^*\left( \frac{x}{n}\right)  \geq  \underset{\lambda \leq 0}{\sup} \left\{ \lambda \frac{x}{n} - \log \mathbb{P}[X_1=0] - \log \left( 1 + e^{\lambda x_{\min}} \frac{\mathbb{P}[X_1>0]}{\mathbb{P}[X_1=0]} \right) \right\}.$$
Instead of determinate the maximum value of the function $g_n$ which would require to know the number of the atoms and their respective values, we propose to study the function $$h_n : \lambda \in ]- \infty, 0] \mapsto \lambda \frac{x}{n} - \log \mathbb{P}[X_1=0] - \log \left( 1 + e^{\lambda x_{\min}} \frac{\mathbb{P}[X_1>0]}{\mathbb{P}[X_1=0]} \right).$$
$h_n$ is a differentiable function on $\mathbb{R}_-$, and for any $n$ such that $x_{\min}\mathbb P(X_1=0) > \dfrac{x}{n}$, $h_n$ realises its unique maximum value at $y_n<0$ given by:
$$y_n = \frac{1}{x_{\min}} \left( \log \left( \frac{x}{n} \right) - \log \left( \frac{\mathbb{P}[X_1>0]}{\mathbb{P}[X_1=0]}\left( x_{\min} - \frac{x}{n} \right)  \right) \right) \underset{n \xrightarrow{} + \infty}{\sim}  \frac{1}{x_{\min}} \log \left( \frac{x}{n} \right).$$ 
We finally obtain
\begin{align*}
    \Lambda^*\left( \frac{x}{n}\right) & \geq  \underset{\lambda \leq 0}{\sup} \left\{ \lambda \frac{x}{n} - \log \mathbb{P}[X_1=0] - \log \left( 1 + e^{\lambda x_{\min}} \frac{\mathbb{P}[X_1>0]}{\mathbb{P}[X_1=0]} \right) \right\} = h_n \left( y_n \right) \\
    & = \frac{1}{x_{\min}} \log \left( \frac{x}{n} \right) \frac{x}{n} -\frac{1}{x_{\min}}  \frac{x}{n} \log \left( \frac{\mathbb{P}[X_1>0]}{\mathbb{P}[X_1=0]}\left( x_{\min} - \frac{x}{n} \right)  \right) \\  & \quad  - \log \mathbb{P}[X_1=0]
    - \log \left( 1 +  \frac{x}{nx_{\min} - x} \right) \\
    & = - \frac{x}{x_{\min}} \frac{ \log n}{ n} - \log \mathbb{P}[X_1=0] + O\left(  \frac{1}{n} \right). 
\end{align*}
The combination of all these inequalities leads to the establishment of the final result. Since $C>1$, we have for $n$ sufficiently large 
\begin{align*}
    \frac{1}{n} \log \mathbb{P} \left[ S_n \leq x  \right] &  \leq \log \mathbb{P}[X_1=0] +  \frac{x}{x_{\min}} \frac{ \log n}{ n} + O\left(  \frac{1}{n} \right) 
    \\
    & \leq \log \mathbb{P}[X_1=0] +  \frac{Cx}{x_{\min}} \frac{ \log n}{ n}.
\end{align*}

\end{proof}

\paragraph{Proof of Lemma \ref{lem:: ratio_limit_tech}}
\begin{proof}
In such configuration, we recall that the asymptotic equivalence from Proposition \ref{theo:: class_c2} justifies that for any $0< c_x < \mathbb{P} [X_1 = 0] M_x < C_x$, there exists $N \in \mathbb{N}$ such that for all $n \geq N$,
\begin{align*}
    \mathbb{P} [X_1 = 0 ] + \frac{c_x + \mathbb{P} [X_1 = 0] M_x}{2n} \leq \frac{\mathbb{P} [ S_{n+1} \leq x ]}{\mathbb{P} [ S_n \leq x ]} \leq \mathbb{P} [X_1 = 0 ] + \frac{C_x + \mathbb{P} [X_1 = 0] M_x}{2n}.
\end{align*}
Consequently, one can deduce the existence of a sequence $(b_n)_{n \in \mathbb{N}}$ such that for all positive integer $n$
$$
    \mathbb{P} [ S_n \leq x ] = \prod_{k=1}^n ( \mathbb{P} [X_1 = 0 ] + b_k ),
$$
such that $0 < b_n \leq \mathbb{P} [X_1 \in ]0,x] ]$ and $ \lim_{n\to +\infty} n b_n = \mathbb{P} [X_1 = 0] M_x$. Consequently:
\begin{align*}
    \log \mathbb{P} [ S_n \leq x ] & = n \log \mathbb{P} [X_1 = 0 ] + \sum_{k=1}^n \log \left( 1 + \frac{b_k}{\mathbb{P} [X_1 = 0 ]} \right) \\
    & \leq n \log \mathbb{P} [X_1 = 0 ] + \sum_{k=1}^n  \frac{b_k}{\mathbb{P} [X_1 = 0 ]} \\
    & = n \log \mathbb{P} [X_1 = 0 ] + \sum_{k=1}^{N-1}  \frac{b_k}{\mathbb{P} [X_1 = 0 ]} + \sum_{k=N}^{n}  \frac{C_x + \mathbb{P} [X_1 = 0] M_x}{2k \mathbb{P} [X_1 = 0 ]}
\end{align*}
where the definition of $N$ induces that $b_k \leq C_x / k$ for any $k \geq N$. 
Since $\frac{C_x + \mathbb{P} [X_1 = 0] M_x}{2} < C_x$, we deduce that for $n$ sufficiently large:
\begin{align*}
    \frac{1}{n} \log \mathbb{P} [ S_n \leq x ] \leq \log \mathbb{P} [X_1 = 0 ] + \frac{ C_x}{\mathbb{P} [X_1 = 0 ]} \frac{\log n}{n}.
\end{align*}
Similarly, we have for any $x \geq 0$, $\log (1+x) \geq x - \frac{1}{2}x^2$. Thus
\begin{align*}
    \log \mathbb{P} [ S_n \leq x ] & = n \log \mathbb{P} [X_1 = 0 ] + \sum_{k=1}^n \log \left( 1 + \frac{b_k}{\mathbb{P} [X_1 = 0 ]} \right) \\
    & \geq n \log \mathbb{P} [X_1 = 0 ] + \sum_{k=1}^n  \frac{b_k}{\mathbb{P} [X_1 = 0 ]} - \frac{1}{2} \left( \frac{b_k}{\mathbb{P} [X_1 = 0 ]} \right)^2 \\
    & = n \log \mathbb{P} [X_1 = 0 ] + \sum_{k=1}^{N-1}  \frac{b_k}{\mathbb{P} [X_1 = 0 ]} - \frac{1}{2} \left( \frac{b_k}{\mathbb{P} [X_1 = 0 ]} \right)^2  + \sum_{k=N}^{n} \frac{b_k}{\mathbb{P} [X_1 = 0 ]} - \frac{1}{2} \left( \frac{b_k}{\mathbb{P} [X_1 = 0 ]} \right)^2 .
\end{align*}
Since $$\frac{1}{n} \sum_{k=1}^{N-1}  \frac{b_k}{\mathbb{P} [X_1 = 0 ]} - \frac{1}{2} \left( \frac{b_k}{\mathbb{P} [X_1 = 0 ]} \right)^2 \underset{n \xrightarrow{} + \infty}{=} o\left( \frac{\log n}{n} \right)$$ 
and 
$$\lim_{n \to + \infty} \sum_{k=N}^{n}  \frac{1}{2} \left( \frac{b_k}{\mathbb{P} [X_1 = 0 ]} \right)^2 \leq \frac{C_x^2 \pi^2}{12 \mathbb{P} [X_1 = 0 ]^2},$$ 
we deduce that for $n$ sufficiently large:
\begin{align*}
    \frac{1}{n} \log \mathbb{P} [ S_n \leq x ] \geq \log \mathbb{P} [X_1 = 0 ] + \frac{ c_x}{ \mathbb{P} [X_1 = 0 ]} \frac{\log n}{n},
\end{align*}
which achieves the proof of the lemma.
\end{proof}

\end{document}